\def\@rmrk#1#2{\refstepcounter
    {#1}\@ifnextchar[{\@yrmrk{#1}{#2}}{\@xrmrk{#1}{#2}}}
\makeatletter\@addtoreset{equation}{section}\makeatother
 \newfont{\bfit}{cmbxti10 scaled 1200}
\renewcommand{\d}{{\rm d}}
 \newcommand{\e}{{\rm e} }
 \newcommand{\eps}{\varepsilon}
 \newcommand{\R}{\mathbb{R}}
 \newcommand{\N}{\mathbb{N}}
 \newcommand{\E}{\mathbf{E}}
 \newcommand{\bE}{\mathbb{E}}
 \renewcommand{\P}{\mathbf{P}}
 \newcommand{\bP}{\mathbb{P}}
 \newcommand{\bQ}{\mathbb{Q}}
 \newcommand{\hP}{\widehat{\mathscr {M}}}
 \def\1{{\mathchoice {1\mskip-4mu\mathrm l} 
{1\mskip-4mu\mathrm l}
{1\mskip-4.5mu\mathrm l} {1\mskip-5mu\mathrm l}}}
 \newcommand{\Mcal}{{\mathcal M}}
  \newcommand{\Bt}{{\boldsymbol{{\dot B}}_t}}
  \newcommand{\Br}{{\boldsymbol{{\dot B}}_r}}
   \newcommand{\Bs}{{\boldsymbol{{\dot B}}_s}}
 \newcommand{\bET}{\widehat{\mathbb E}_T}
  \newcommand{\bEt}{\widehat{\mathbb E}_t}
\newcommand{\bETt}{\widehat{\mathbb E}_T^{\ssup{\boldsymbol{{\dot B}}_t}}}
\newcommand{\X}{\widetilde{\mathcal X}}
\newcommand{\ignore}[1]{}
\newcommand{\ssup}[1] {{\scriptscriptstyle{({#1}})}}
\renewcommand{\subsection}{\secdef \subsct\sbsect}
\newcommand{\subsct}[2][default]{\refstepcounter{subsection}
\vspace{0.15cm}
{\flushleft\bf \arabic{section}.\arabic{subsection}~\bf #1  }
\nopagebreak\nopagebreak}
\newcommand{\sbsect}[1]{\vspace{0.1cm}\noindent
{\bf #1}\vspace{0.1cm}}
\newtheorem{theorem}{Theorem}[section]
\newtheorem{lemma}[theorem]{Lemma}
\newtheorem{cor}[theorem]{Corollary}
\newtheorem{prop}[theorem]{Proposition}
\newtheoremstyle{thm}{1.5ex}{1.5ex}{\itshape\rmfamily}{}
{\bfseries\rmfamily}{}{2ex}{}
\newtheoremstyle{rem}{1.3ex}{1.3ex}{\rmfamily}{}
{\itshape\rmfamily}{}{1.5ex}{}
\theoremstyle{rem}
\newtheorem{remark}{{\slshape\sffamily Remark}}[]
\def\thebibliography#1{\section*{References}
  \list%
  {\arabic{enumi}.}
    {\settowidth\labelwidth{[#1]}\leftmargin\labelwidth
    \advance\leftmargin\labelsep
    \parsep0pt\itemsep0pt
    \usecounter{enumi}}
    \def\newblock{\hskip .11em plus .33em minus .07em}
    \sloppy                   
    \sfcode`\.=1000\relax}
\begin{document}

\title[Geometry of the GMC in the Wiener space]
{\large Geometry of the Gaussian multiplicative chaos in the Wiener space}
\author[Yannic Br\"oker and Chiranjib Mukherjee]{}
\maketitle
\thispagestyle{empty}
\vspace{-0.5cm}

\centerline{\sc Yannic Br\"oker  \footnote{University of Muenster, Einsteinstrasse 62, Muenster 48149, Germany, {\tt yannic.broeker@uni-muenster.de}} and 
Chiranjib Mukherjee \footnote{University of  Muenster, Einsteinstrasse 62, Muenster 48149, Germany, {\tt chiranjib.mukherjee@uni-muenster.de}
}}

\renewcommand{\thefootnote}{}
\footnote{\textit{AMS Subject
Classification:  60K35, 60G57, 60G15, 60D05, 35R60, 35Q82} }
\footnote{\textit{Keywords:} {Gaussian multiplicative chaos, thick points, Liouville quantum gravity, scaling exponents, Kahane's inequality}}

\vspace{-0.5cm}
\centerline{\textit{University of M\"unster}}
\vspace{0.2cm}

\begin{center}
\today

\end{center}

\begin{quote}{\small {\bf Abstract: } 
We develop an approach for investigating geometric properties of Gaussian multiplicative chaos (GMC) in an infinite dimensional set up. The base space is chosen to be the space of continuous functions endowed with Wiener measure, and the random field is a space-time white noise integrated against Brownian paths. In this set up, we show that in any dimension $d\geq 1$ and for any inverse temperature, the GMC-volume of a ball, uniformly around all paths, decays exponentially with an explicit decay rate. The exponential rate reflects the balance between two competing terms, namely the principal eigenvalue of the Dirichlet Laplacian and an energy functional defined over a certain compactification developed earlier in \cite{MV14}. For $d\geq 3$ and high temperature,  the underlying Gaussian field is also shown to attain very high values under the GMC -- that is, all paths are ``GMC -thick" in this regime. Both statements are natural infinite dimensional extensions of similar behavior captured by $2d$ Liouville quantum gravity and reflect a certain ``atypical behavior" of the GMC: while the GMC volume decays exponentially fast uniformly over all paths, the field itself attains atypically large values on all paths when sampled according to the GMC.  It is also shown that, despite the exponential decay of volume for any temperature, for small enough temperature, the normalized overlap of two independent paths tends to follow one of only a finite number of independent paths for most of its allowed time horizon, allowing the GMC probability to accumulate most of its mass along such trajectories.  
}
\end{quote}

\section{Introduction and the main results.}

\subsection{Main results: informal description}\label{sec-result-informal}

Consider a centered Gaussian field $\{H(\omega)\}_{\omega\in \mathscr C}$ on a complete probability space $(\mathcal E, \mathcal F, \mathbf P)$, with the field  parametrized by a metric space $\mathscr C$ carrying a reference measure $\mu$. For any parameter $\gamma>0$,  the transformed measure  
 \begin{equation}\label{GMC0}
M_\gamma(\d\omega)= \exp\Big\{\gamma H(\omega)- \frac 12 \gamma^2 \E\big[H(\omega)^2\big]\Big\} \,\, \mu(\d\omega), \quad \gamma>0,
\end{equation}
on $\mathscr C$ is known as a {\it Gaussian multiplicative chaos} (GMC) which was constructed by Kahane in a fundamental work \cite{K85}. 
In a general set up, the random field $H(\cdot)$ might include oscillations, which are however cancelled out 
after integration w.r.t. suitable test functions -- that is, $\{H(\omega)\}_{\omega\in \mathscr C}$ could be interpreted as a distribution.
However, these oscillations (when intrinsically present) become highly magnified after exponentiating the field, so that a suitable mollification or a cut-off procedure becomes necessary to interpret the formally defined object \eqref{GMC0} on a rigorous level. The results of the current article are strongly motivated by 
the ongoing investigations on {\it finite dimensional GMC} where the underlying object $H(\cdot)$ is a Gaussian free field in the complex plane leading to a GMC known as the
{\it Liouville quantum gravity}, a random surface that appears as the scaling limit of random planar maps and exhibits remarkable geometric properties in terms of its multi-fractal spectrum and (non-integer) volume decay exponents, see Section \ref{sec-motivation} for a discussion drawing on analogies to the present results. 

\smallskip

We investigate  GMC measures in an {\it infinite dimensional} setting from a geometric viewpoint for the first time in the present article, whose results    
can be summarized as follows. The base space $\mathscr C_T=C([0,T];\R^d)$
stands for the metric space of continuous functions $\omega:[0,T]\to \R^d$ equipped with the uniform metric. This space is tacitly endowed with the Wiener measure $\bP_x$ corresponding to $\R^d$-valued Brownian path starting at $x\in \R^d$. The Gaussian field $\mathscr H_T(\cdot)$ we are interested in is driven by Gaussian space-time white noise ${\dot B}$ (see Section \ref{sec-results} for a precise definition) integrated against the Brownian path:
\begin{equation}\label{H0}
\mathscr H_T(\omega)= \int_0^T\int_{\R^d} \kappa(\omega_s-y) {\dot B}(s,y)\d y \d s, \quad \mbox{so that} \quad \E[\mathscr H_T(\omega)\mathscr H_T(\omega^\prime)]=\int_0^T (\kappa\star\kappa)(\omega_s-\omega^\prime_s) \d s.
\end{equation}
Here the first integral above is interpreted in the It\^o sense (with $\kappa$ being any smooth, spherically symmetric and compactly supported function with $\int_{\R^d} \kappa(x) \d x=1$) and $\E$ denotes expectation w.r.t. the noise ${\dot B}$. Thus, the ambient field $\{\mathscr H_T(\omega)\}_{\omega\in\mathscr C_T}$ is now indexed by Wiener paths 
$\omega\in\mathscr C_T$, and analogous to \eqref{GMC0}, the {\it renormalized} GMC probability measure (at cut-off level $T$) is given by  
\begin{equation}\label{hP0}
\hP_{\gamma,T}(\d\omega)= \frac 1 {\mathscr Z_{\gamma,T}} \, \exp\Big\{ \gamma \mathscr H_T(\omega) - \frac 12\gamma^2 T(\kappa\star\kappa)(0)\Big\} \bP_0(\d\omega),\end{equation}
where $\mathscr Z_{\gamma,T}$ is the total mass of the above exponential weight (under $\bP_0$) and $\gamma>0$ is a parameter known as {\it inverse temperature}. Our first main result, stated formally in Theorem \ref{Theorem 2 new}, quantifies the {\it volume decay exponents} of balls in $\mathscr C_T$ under $\hP_{\gamma,T}$: if $\mathscr N_{r,T}(\varphi)$ denotes a neighborhood of radius $r>0$ around $\varphi\in \mathscr C_T$, then for any $d\geq 1$, $r>0$ and $\gamma>0$, it holds that 
\begin{equation}\label{result0}
\sup_{\varphi\in \mathscr C_T}\hP_{\gamma,T}[\mathscr N_{r,T}(\varphi)]  \lesssim \exp[-\Theta T] \qquad \P-\mbox{a.s.},
\end{equation}
as $T\to\infty$, and for some deterministic and explicit constant $\Theta>0$, which involves two competing terms, namely the principal eigenvalue $\lambda_1>0$ of the Dirichlet Laplacian $-\frac 12 \Delta$ (which captures the volume decay rate that under the base measure $\bP_0$, i.e., when $\gamma=0$), and an additional energy functional minimized over (probability measures on) the translation-invariant compactification constructed in \cite{MV14}, see \eqref{C1}. For $d\geq 3$, for $\gamma>0$ sufficiently small, the decay rate 
$\Theta$ simplifies and involves only the principal eigenvalue $\lambda_1$ and a penalization term $\gamma^2 (\kappa\star\kappa)(0)/2$. A similar exponential lower bound on the GMC probability is also shown to hold pointwise. 
In the second part of Theorem \ref{Theorem 2 new} it is shown that 
\begin{equation}\label{result1}
\begin{aligned}
\frac 1{T(\kappa\star\kappa)(0)} \int \mathscr H_T(\omega) \hP_{\gamma,T}(\d\omega)
 \to  \gamma >0, \qquad \P- \mbox{a.s. when }\gamma\in(0,\gamma_1), d\geq 3,
\end{aligned}
\end{equation}
i.e., the field $\mathscr H_T(\omega)$ attains high values averaged w.r.t. $\hP_{\gamma,T}$. 
Recall  that $\mathrm{Var}^{\mathbf P}[\mathscr H_T]= T(\kappa\star \kappa)(0)$. 
The statements \eqref{result0} and \eqref{result1}, combined together, confirm the following intuition on the {\it atypical behavior} of our GMC measure. Since the former statement implies that the GMC volume decays exponentially uniformly over all paths for {\it any} $\gamma>0$, 
 the only reason that $\hP_{\gamma,T}$ could be non-trivial (in the limit $T\to\infty$) is if there are enough paths where the Gaussian field $\mathscr H_T$ is {\it atypically large}. Thus, it is conceivable that the GMC measure in some sense is ``carried by" sufficiently many such thick paths, and it is natural to wonder what their {\it level of thickness} could be. The second statement \eqref{result1} in Theorem \ref{Theorem 2 new} confirms this intuition -- when $\gamma>0$ remains sufficiently small and $d\geq 3$, all paths are $\hP_{\gamma,T}$-thick, with $\gamma (\kappa\star\kappa)(0)$ being the required level of thickness; see \ref{eq-thick} in Theorem \ref{Theorem 2 new}. 
Similar results pertinent to GMC measure arising from a multiplicative-noise stochastic heat equation in $d\geq 3$ is derived in Corollary \ref{Theorem 3}. 
To complete the picture, we also show that, even though GMC-volume decays exponentially for any temperature $\gamma>0$, once we tune $\gamma>0$ sufficiently {\it large}, the normalized overlap of two independent paths, sampled according to $\hP_{\gamma,T}$,  tends to follow one of only a finite number of independent paths for most of its allowed time horizon, allowing the GMC probability 
to accumulate most of its mass along such trajectories, see Theorem \ref{localization via overlaps} for a precise statement (which holds when $\gamma$ is at least as large as 
$\gamma_1$; and its complementary phase $\gamma\in (0,\gamma_1)$ in $d\geq 3$ is the regime where all paths are GMC-thick, as underlined by \eqref{result1}, see also Remark \ref{remark5}). Let us now explain the background which motivated the current work.

\subsection{Motivation: geometry of the Liouville measure.}\label{sec-motivation} 

 In a finite dimensional setting, GMC measures share close connection to two-dimensional Liouville quantum gravity which 
has seen a lot of revived  interest in the recent years, see \cite{BP21} for an exposition. In this setting, the GMC measure (or the so-called {\it Liouville measure}) appears as the limit $\mu_{\gamma,\eps}(\d x)= \eps^{\gamma^2/2}\exp\{\gamma h_\eps(z)\}\d z$, with $h_\eps$ being a suitable approximation (e.g. circle average) of the (log-correlated) two-dimensional Gaussian free field (GFF) with $\mathrm{Var}(h_\eps)=\log(1/\eps)+ O(1)$ and $\d z$ stands for the Lebesgue measure. A rigorous construction of $\lim_{\eps\to 0} \mu_{\gamma,\eps}$ has been carried out in \cite{K85,RV10,S14,DS11,Ber17} and it is shown that  when $\gamma \in (0, 2)$, $\mu_{\gamma,\eps}$ converges toward a nontrivial measure $\mu_\gamma$ which is diffuse and is known as the {\it{subcritical GMC}}; when $\gamma\geq 2$, we refer to \cite{DRSV14-I,DRSV14-II,MRV16} for the notion of atomic structure of (super-) critical GMC.

Informally, the Liouville measure is a random surface carrying a Riemannian metric tensor (formally given by the exponential of the GFF)
and a parametrization by a domain which preserves the inherent conformal invariance, but distorts the resulting metric and the volume. 
This distortion also reflects a certain ``atypical behavior" of GMC measures: note that the weight 
$\exp\{\gamma h_\eps(z)- \frac 12 \gamma^2 \E[h_\eps(z)^2]\}$ vanishes as $\eps\to 0$ for each fixed $z$ (on some domain $D\subset \R^2$), so 
a non-trivial limiting Liouville measure  must be ``carried" by sufficiently many thick points $z$, that is those $z$ where the
 field $h_\eps(z)$ is {\it atypically large}. In fact, if $\gamma\in (0,2)$ and $z\in D$ is sampled according to $\mu_\gamma$, then 
$\frac {h_\eps(z)}{\log(1/\eps)} \to \gamma >0$. Thus, \eqref{result1} is the infinite-dimensional analogue of the latter assertion (recall that for GFF, $\mathrm{Var}(h_\eps)=\log(1/\eps)+ O(1)$, while for our Gaussian field defined in \eqref{H0}, $\mathrm{Var}^{\mathbf P}(\mathscr H_T)=T(\kappa\star\kappa)(0)$).

 Another prominent aspect of the Liouville measure, which has far reaching consequences,
  hinges on its {\it multi-fractal spectrum}$^{\dagger}$\footnote{$^{\dagger}$For the Liouville measure $\mu_\gamma$ it is known that for any $q\in\R$, $\mathbf E^{\mathrm{GFF}}[\mu_\gamma(B_r(z))^q] \sim r^{\dot B}$ where ${\dot B}= q(2+ \frac{\gamma^2}2)- \frac {\gamma^2 q^2}2$, which is a non-linear function of $q$, is called the multi-fractal spectrum of the Liouville measure.}and {\it volume decay exponents} on microscopic balls. 
Indeed, if $z$ is sampled from the (renormalized) Liouville probability measure $\mu_\gamma$, the (Liouville) scaling exponent $\Delta$ of a set $A\subset \R^2$ is 
defined via the volume decay 
\begin{equation}\label{scalingLiouville}
(\mathbf P^{\ssup{\mathrm{GFF}}}\otimes \mu_\gamma)\big[ N_{\eps}(z) \cap A \ne \emptyset\big] \sim \exp[- (\log 1/\eps) \Delta] \qquad\mbox{ as }\eps\downarrow 0,
\end{equation}
where $N_\eps(z)$ is a neighborhood in the Liouville metric \cite{GM19}. Thus, 
the scaling exponent $\Delta$ allows one to link the volume of a set in the Euclidean geometry to the same in the Liouville geometry$^{\dagger\dagger}$\footnote{$^{\dagger\dagger}$
The scaling exponent is a key object that also appears in the celebrated Knizhnik-Polyakov-Zamolodchikov (KPZ) formula \cite{KPZ88} which dictates that if a 
subset $A$ which is independent of the GFF, has Euclidean scaling exponent $x$ (meaning $P(A\cap B(z,\eps) \ne \emptyset)\sim \eps^x$, with $B(z,\eps)$ being the Euclidean ball of radius $\eps$ around $z$), then $A$ has Liouville scaling exponent $\Delta$; with $x$ and $\Delta$ being related by the quadratic relation $x= \frac {\gamma^2} 4 \Delta^2 + (1-\frac {\gamma^2}4) \Delta$, see \cite{RV11,DS11,BGRV16}.} From this viewpoint, the first assertion \eqref{result0} in Theorem \ref{Theorem 2 new} 
could be interpreted as an infinite dimensional analogue (in an almost sure sense) of \eqref{scalingLiouville} with the constant $\Theta>0$ in \eqref{result0} linking
 the volume under the Wiener measure to the same under the GMC measure $\hP_{\gamma,T}$.

It seems natural to wonder if (some of) the results sketched above can be extended to an infinite-dimensional set up where
important assumptions like logarithmic-covariance, conformal invariance and domain Markov property of $2d$ GFF are no longer available. A canonical choice for infinite-dimensional GMC is the (path) space of continuous functions carrying the Wiener measure. This incentive is also natural  because of close connections of this set up to many disordered systems of statistical mechanics like spin glasses, directed polymer in a random environment as well as to multiplicative-noise stochastic PDEs (as will be explained later). However, a major thrust of the works on these topics over the last years came from studying the total mass of the GMC measure or investigating diffusivity (resp. localization) of the distribution of the Brownian endpoint under the polymer measure. The novel contribution of the present work is the direct investigation of the actual GMC measure (resp. the polymer measure)$^{\dagger\dagger\dagger}$\footnote{$^{\dagger\dagger\dagger}$From the viewpoint of directed polymers, results of the article are new, to the best of our knowledge.}itself from a geometric viewpoint which is inspired by analogous behavior of $2d$ Liouville measure as outlined in the discussion above. 
We now turn to precise statements of the  results announced in Section \ref{sec-result-informal}.

\section{Main results.}

\subsection{Exponential volume decay in GMC-space.}\label{sec-results}

We fix any spatial dimension $d\geq 1$,  write $\mathscr C_T= C([0,T];\R^d)$  for the metric space of continuous functions $\omega: [0,T] \to \R^d$ equipped with the uniform norm 
\begin{align}
\label{definition of infty,T norm}
	\lVert\omega\rVert_{\infty, T}=\sup_{s\in[0,T]}|\omega(s)|, \quad\mbox{with}\,\, \mathscr N_r(\omega)= \mathscr N_{r,T}(\omega)= \big\{\varphi \in \mathscr C_T\colon \|\varphi- \omega\|_{\infty,T} < r\big\}.
	\end{align} 
$\mathscr C_T$ is tacitly equipped with the Wiener measure $\bP_x$ corresponding to an $\R^d$-valued Brownian motion starting at $x\in \R^d$. For any $t>0$, $\mathcal G_t$ will stand for the $\sigma$-algebra 
generated by the path $(\omega_s)_{0\leq s \leq t}$ until time $t$. Let $(\mathcal E,\mathcal F, \mathbf P)$ be a complete probability space and ${\dot B}$ denotes a Gaussian space-time white noise, which is independent of the Brownian path defined above. 
In other words, if $\mathcal S(\R_+\times \R^d)$ denotes the space of rapidly decreasing Schwartz functions, $\{{\dot B}(f)\}_{f\in \mathcal S(\R_+\times \R^d)}$ is a centered Gaussian process with covariance $\mathbf E[ {\dot B}(f)\,\, {\dot B}(g)]= \int_0^\infty \int_{\R^d} f(t,x) g(t,x) \d x \d t$ for $f, g \in \mathcal S(\R_+\times \R^d)$, with 
$\E$ denoting expectation w.r.t. $\P$.

We also fix a nonnegative function $\kappa$ which is smooth, spherically symmetric and is supported in a ball $B_{1/2}(0)$ of radius $1/2$ around $0$ and normalized to have total mass $\int_{\R^d} \kappa(x)\d x=1$. For any fixed Brownian path $\omega\in \mathscr C_T$ we define the It\^o integral 
\begin{equation}
\mathscr H_T(\omega)= \int_0^T \int_{\R^d} \kappa(\omega_s- y) {\dot B}(s,\d y) \d s,\quad\text{with }\quad  \E[\mathscr H_T^2(\omega)]= T (\kappa\star\kappa)(0).
\end{equation}
For any $\gamma>0$,  the resulting {\it (renormalized) Gaussian multiplicative chaos} is a probability measure on the space $\mathscr C_T$ defined as
\begin{equation}\label{MT}
\begin{aligned}
&\hP_{\gamma,T}(\d\omega)= \frac 1 {\mathscr Z_{\gamma,T}} \, \exp\bigg(\gamma \mathscr H_T(\omega) - \frac {\gamma^2T}2  (\kappa\star\kappa)(0)\bigg) \bP_0(\d\omega), \quad\mbox{with   }
\mathscr Z_{\gamma,T}= \bE_0\big[\e^{\gamma \mathscr H_T(\omega) - \frac {\gamma^2T}2  (\kappa\star\kappa)(0)}\big].
\end{aligned}
\end{equation}

To define the decay rate $\Theta$ appearing in \eqref{result0}, we need some further definitions.  
We denote by $\Mcal_1= {\Mcal_1}(\R^d)$ (resp. $\Mcal_{\leq 1}$) the space of probability (resp. sub-probability) measures on $\R^d$, 
which acts as an additive group of translations on these spaces. 
Let $\widetilde\Mcal_1= \Mcal_1 \big/ \sim$ be the quotient space 
of $\Mcal_1$ under this action, that is,  for any $\mu\in \Mcal_1$, its {\it{orbit}} is defined by $\widetilde{\mu}=\{\mu\star\delta_x\colon\, x\in \R^d\}\in \widetilde\Mcal_1$. The quotient space $\widetilde\Mcal_1$ can be embedded in a larger space 
\begin{equation*}
\X: =\bigg\{\xi:\xi=\{\widetilde{\alpha}_i\}_{i\in I},\alpha_i\in \mathcal{M}_{\leq 1},\sum_{i\in I}\alpha_i(\R^d)\leq 1\bigg\}
\end{equation*}
which consists of all empty, finite or countable collections of orbits from $\widetilde\Mcal_{\leq 1}$ whose masses add up to at most one. The space $\X$ and a metric 
structure there was introduced in \cite{MV14}, and it was shown that under that metric, $\widetilde\Mcal_1(\R^d)$ is densely embedded in $\X$ and any sequence in 
$\widetilde\Mcal_1(\R^d)$ converges along some subsequence to an element $\xi$ of $\X$ -- that is, $\X$ is the {\it compactification} of the quotient space $\widetilde\Mcal_1(\R^d)$, 
see Section \ref{spaceX} for details. On the space $\X$ we define an energy functional  
\begin{equation}\label{def-Phi}
\begin{aligned}
&F_\gamma(\xi)=\frac{\gamma^2}{2}\sum_{\widetilde\alpha\in\xi}\int_{\R^d\times\R^d}(\kappa\star\kappa)(x_1-x_2)\prod_{j=1}^2\alpha(\mathrm{d}x_j) \quad \forall \, \xi\in \X, \quad\mbox{and}\\
&{\mathcal E}_{F_\gamma}(\vartheta)= \int_{\X} F_\gamma(\xi)\,\vartheta(\d\xi) \qquad \vartheta\in\Mcal_1(\X).
\end{aligned}
\end{equation}
Here $\Mcal_1(\X)$ denotes the space of probability measures on the space $\X$.  There is an interesting connection between the structure of the space $\X$ and the solution of the variational problem $\sup_{\vartheta\in\Mcal_1(\X)}{\mathcal E}_{F_\gamma}(\vartheta)$: Indeed, there is a non-empty, compact subset ${\mathfrak m_\gamma}\subset \Mcal_1(\X)$
consisting of the maximizer(s) of the variational problem $\sup_{\vartheta\in{{\mathfrak m_\gamma}_\gamma}}{\mathcal E}_{F_\gamma}(\vartheta)$, the maximizing set is a singleton $\delta_{\tilde 0}\in \X$ for $d\geq 3$ and small enough $\gamma$, and any maximizer assigns positive mass only to those elements of the compactification $\X$ whose total mass add up to one, see Proposition \ref{prop-m} for details. We are now ready to state our first main result.$^\ddagger$\footnote{$^\ddagger$Here and in the sequel, $\lambda_1(r)$ will denote for the principal eigenvalue of $-\frac 12 \Delta$ on a ball of radius $r$ around the origin with Dirichlet boundary condition.}


\begin{theorem}[Scaling exponents of GMC in the Wiener space]\label{Theorem 2 new}
Fix any $d\in \N$, $\gamma>0$ and $r> 0$.
\begin{itemize}
\item[(1)] $\P$-almost surely, 
\begin{align}
&\limsup_{T\to\infty}\sup_{\varphi\in\mathscr C_T}\frac{1}{T}\log\hP_{\gamma,T}\big[\mathscr N_{r}(\varphi)\big]\leq -\Theta , \quad \mbox{where} \label{esti}
\\
&\Theta: = \frac 14 \lambda_1(\sqrt 2 r) - \frac {\gamma^2 (\kappa\star\kappa)(0)}2 + \bigg[\frac 12 \sup_{\vartheta\in \mathfrak m_{2\gamma}} {\mathcal E}_{F_{2\gamma}}(\vartheta) 
- \sup_{\vartheta\in \mathfrak m_{\gamma}} {\mathcal E}_{F_{\gamma}}(\vartheta)\bigg].\label{C1}
\end{align}
Moreover, for any $d\in \N$ and $\gamma>0$, there exists $r_0=r_0(d,\gamma)$ such that for $r\in (0,r_0)$, $\Theta>0$. Conversely, for any $d\in \N$ and $r>0$,
there is $\gamma_c>0$ such that for $\gamma \in (0,\gamma_c)$, $\Theta>0$. Finally, for $d\geq 3$ and $\gamma>0$ sufficiently small, 
$$
\Theta= \frac 14\lambda_1(\sqrt 2r) - \frac{\gamma^2 (\kappa\star\kappa)(0)}2>0.
$$ Finally, for any $d\in \N, \gamma>0, r>0$, an exponential lower bound similar to \eqref{esti} also holds pointwise. 
\item[(2)] Fix $d\geq 3$ and $\gamma>0$ sufficiently small. Then, 
\begin{equation}\label{eq-thick}
\lim_{T\to\infty} \bE^{\hP_{\gamma,T}}\bigg[ \frac{\mathscr H_T(\cdot)}T\bigg] = \gamma (\kappa\star\kappa)(0)>0 \qquad \P-\,\mbox{a.s.}
\end{equation}
\end{itemize}
\end{theorem}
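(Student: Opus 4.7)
The plan is to prove the upper bound in Part~(1) by decoupling the ball indicator from the GMC weight via Cauchy--Schwarz, and then to derive Part~(2) by identifying $\mathscr H_T$ as the $\gamma$-derivative of $\log\mathscr Z_{\gamma,T}$. Setting $X := \e^{\gamma\mathscr H_T - \gamma^2 T(\kappa\star\kappa)(0)/2}$, Cauchy--Schwarz under $\bP_0$ gives
\begin{equation*}
\bE_0\big[\mathbf{1}_{\mathscr N_r(\varphi)} X\big]^2 \le \bP_0[\mathscr N_r(\varphi)]\,\bE_0[X^2] = \bP_0[\mathscr N_r(\varphi)]\,\mathscr Z_{2\gamma,T}\,\e^{\gamma^2 T(\kappa\star\kappa)(0)},
\end{equation*}
because the exponent in $X^2$ equals the $2\gamma$-renormalized exponent plus the constant $\gamma^2 T(\kappa\star\kappa)(0)$. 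Dividing by $\mathscr Z_{\gamma,T}^2$ and taking $\tfrac{1}{2T}\log$ yields
\begin{equation*}
\tfrac{1}{T}\log\hP_{\gamma,T}[\mathscr N_r(\varphi)] \le \tfrac{1}{2T}\log\bP_0[\mathscr N_r(\varphi)] + \tfrac{\gamma^2(\kappa\star\kappa)(0)}{2} + \tfrac{1}{2T}\log\mathscr Z_{2\gamma,T} - \tfrac{1}{T}\log\mathscr Z_{\gamma,T}.
\end{equation*}
To eliminate the supremum over $\varphi$, I introduce two independent Brownian motions $\omega^{(1)},\omega^{(2)}$ and apply the triangle inequality: $\bigl(\sup_\varphi \bP_0[\mathscr N_r(\varphi)]\bigr)^2 \le \bP^{\otimes 2}\bigl[\|\omega^{(1)}-\omega^{(2)}\|_{\infty,T} < 2r\bigr]$. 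Since $\omega^{(1)}-\omega^{(2)}$ is distributed as $\sqrt 2 W$ for a standard Brownian motion $W$, the right-hand side equals $\bP[\|W\|_{\infty,T} < \sqrt 2 r]$, which decays as $\exp(-\lambda_1(\sqrt 2 r)T + o(T))$ by the classical Dirichlet small-ball asymptotics. Combining with the almost-sure free-energy limit $\tfrac{1}{T}\log\mathscr Z_{\gamma,T}\to \sup_{\vartheta\in\mathfrak m_\gamma}\mathcal E_{F_\gamma}(\vartheta)$, whose variational formulation is supplied by the compactification $\X$ of \cite{MV14}, produces the stated rate $-\Theta$.

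For positivity: as $r\downarrow 0$, $\tfrac{1}{4}\lambda_1(\sqrt 2 r)\to\infty$ dominates the $r$-independent terms; as $\gamma\downarrow 0$, the $\gamma$-dependent contributions vanish continuously while $\tfrac{1}{4}\lambda_1(\sqrt 2 r)$ stays fixed. In $d\ge 3$ for small $\gamma$, Proposition~\ref{prop-m} yields $\mathfrak m_\gamma=\{\delta_{\tilde 0}\}$ with $F_\gamma(\tilde 0)=0$ (empty orbit collection), so both variational suprema vanish and $\Theta$ collapses to $\tfrac{1}{4}\lambda_1(\sqrt 2 r)-\tfrac{\gamma^2(\kappa\star\kappa)(0)}{2}$. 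The pointwise lower bound follows from Jensen's inequality applied to the conditional measure $\bP_0(\,\cdot\,|\,\mathscr N_r(\varphi))$:
\begin{equation*}
\log\hP_{\gamma,T}[\mathscr N_r(\varphi)] \ge \log\bP_0[\mathscr N_r(\varphi)] + \gamma\,\bE_0\bigl[\mathscr H_T\,\big|\,\mathscr N_r(\varphi)\bigr] - \tfrac{\gamma^2 T(\kappa\star\kappa)(0)}{2} - \log\mathscr Z_{\gamma,T};
\end{equation*}
a Gaussian second-moment estimate shows $\bE_0[\mathscr H_T\,|\,\mathscr N_r(\varphi)]=O(\sqrt T)$ under $\mathbf P$, so after dividing by $T$ only exponentially decaying contributions remain.

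For Part~(2), I start from the identity
\begin{equation*}
\bE^{\hP_{\gamma,T}}\bigl[\mathscr H_T\bigr] = \gamma T(\kappa\star\kappa)(0) + \partial_\gamma\log\mathscr Z_{\gamma,T},
\end{equation*}
which reduces the claim to $\tfrac{1}{T}\partial_\gamma\log\mathscr Z_{\gamma,T}\to 0$ almost surely. In the weak-disorder regime ($d\ge 3$, $\gamma$ small), $(\mathscr Z_{\gamma,T})_T$ is an $L^2$-bounded positive $\mathbf P$-martingale with a.s.\ limit $\mathscr Z_{\gamma,\infty}>0$, so $\tfrac{1}{T}\log\mathscr Z_{\gamma,T}\to 0$ pointwise in a neighborhood of $\gamma$. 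Since $\gamma\mapsto \log\mathscr Z_{\gamma,T}+\tfrac{\gamma^2 T}{2}(\kappa\star\kappa)(0)$ is convex (as the log-MGF of $\mathscr H_T$ under $\bP_0$), sandwiching $\tfrac{1}{T}\partial_\gamma\log\mathscr Z_{\gamma,T}$ between forward/backward difference quotients of $\tfrac{1}{T}\log\mathscr Z_{\gamma,T}$ on a shrinking interval and invoking the pointwise vanishing of those quotients yields the required convergence. I anticipate the main obstacles to be: (a) the almost-sure identification of $\tfrac{1}{T}\log\mathscr Z_{\gamma,T}$ with $\sup_{\vartheta\in\mathfrak m_\gamma}\mathcal E_{F_\gamma}(\vartheta)$, which is the heart of the variational picture based on \cite{MV14}; and (b) ensuring the weak-disorder martingale convergence is uniform enough in $\gamma$ to transfer pointwise convergence of the free energy to convergence of its $\gamma$-derivative through convexity.
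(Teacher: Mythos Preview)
Your proposal follows essentially the same route as the paper: Cauchy--Schwarz to decouple the indicator from the exponential weight, the two-independent-Brownian-motions trick to convert $\sup_\varphi \bP_0[\mathscr N_r(\varphi)]$ into a single Dirichlet exit probability, and the almost-sure variational formula for the free energy; for Part~(2), both you and the paper use the identity $\widehat\bE_{\gamma,T}[\mathscr H_T]=\partial_\gamma\log Z_{\gamma,T}$ together with convexity of $\gamma\mapsto\log Z_{\gamma,T}$ to pass pointwise free-energy convergence to derivative convergence.

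A few small points where you diverge or slip. First, the sign of the free-energy limit is $\tfrac{1}{T}\log\mathscr Z_{\gamma,T}\to -\sup_{\vartheta\in\mathfrak m_\gamma}\mathcal E_{F_\gamma}(\vartheta)$, not $+\sup$; your displayed inequality is correct but the sentence quoting the limit has the sign reversed. Second, for the $d\ge 3$ simplification you need \emph{both} $\sup_{\mathfrak m_\gamma}\mathcal E_{F_\gamma}$ and $\sup_{\mathfrak m_{2\gamma}}\mathcal E_{F_{2\gamma}}$ to vanish, so the relevant smallness condition is $2\gamma\le\gamma_1$, not just $\gamma\le\gamma_1$. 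Third, for the pointwise lower bound the paper proceeds differently: it first takes $\mathbf E$ and uses $\mathbf E[\mathscr H_T]=0$ exactly (so Jensen gives $\mathbf E[\log Z_T(A)]\ge\log\bP_0(A)$ with no error term), then invokes a concentration lemma to pass from $\mathbf E[\log Z_T(A)]$ back to $\log Z_T(A)$ almost surely. Your direct route via the Gaussian variance bound on $\bE_0[\mathscr H_T\mid\mathscr N_r(\varphi)]$ also works, but to get an almost-sure (rather than in-probability) $o(T)$ bound you need Gaussian tails plus Borel--Cantelli along a subsequence and interpolation, which you should state explicitly. The paper additionally proves a lower bound on $\bP_0[\mathscr N_r(\varphi)]$ valid for general $\varphi$ via Cameron--Martin and a subadditive ergodic argument, yielding an extra constant $\rho$ in the rate; for $\varphi=0$ your simpler Dirichlet eigenvalue asymptotic suffices.
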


\begin{remark}[{\it Kahane's GMC and Theorem \ref{Theorem 2 new}}.]\label{rmk-Kahane}
As remarked earlier, the statements \eqref{esti}-\eqref{eq-thick} imply exponential volume 
decay, underline the emergence of thick paths and determine the level of their thickness; recall the discussion below \eqref{result1} in Section \ref{sec-result-informal}. It would be quite intriguing to prove existence of the infinite-volume limit $\lim_{T\to\infty}\hP_{\gamma,T}(\cdot)$ in the full uniform integrability/ weak disorder phase and study properties of this limit as done by Kahane \cite{K85} for log-correlated fields. 
It is reasonable to expect the results of Theorem \ref{Theorem 2 new} to be helpful in this pursuit. Indeed, the existence of the $2d$ Liouville measure, already in the $L^2$-phase $\gamma \in (0,\sqrt 2)$ requires good estimates for the $L^2$-norm $\mathbf E[(\mu_{\gamma,\eps}(N)- \mu_{\gamma,\eps/2}(N))^2]$ under the 
{\it approximating} Liouville measure $\mu_{\gamma,\eps}$ (e.g. defined via the circle average of the GFF), see the construction of Berestycki \cite{Ber17}. Extending the limit to the uniform integrable phase $\gamma \in (0,2)$ requires additional information on thick points as one needs to remove points that are thicker than a prescribed level of typical thickness (again see \cite{Ber17}). However, because of the present infinite dimensional set up (and lack of local compactness), additional difficulties are likely to arise, tackling which seems to go beyond the scope of the current article.$^{\star}$\footnote{$^{\star}$One such difficulty might already arise from the covariance structure of the underlying field $\mathrm{Cov}[\mathscr H_T(\omega) \mathscr H_T(\omega^\prime)]= \int_0^T (\kappa\star \kappa)(\omega_s-\omega^\prime_s) \d s$ which depends on the mollification scheme $\kappa$ of the white-noise field ${\dot B}$. In contrast, note that $\mathrm{Var}(h_\eps(z))=\log(1/\eps)+ R(z,D)$ (with $R(z,D)$ being the conformal radius of the simply connected domain $D$ viewed from $z$) does not depend on any mollification scheme if we consider the circle average approximation $h_\eps$ of the $2d$ GFF $h$.}\qed
 \end{remark}

It is also useful to record the following reformulation of the above results concerning a GMC measure arising from the solution of the multiplicative noise {\it stochastic heat equation} in $d\geq 3$, which can be written as an It\^ o SDE
\begin{equation}\label{she}
\d u_{\eps}(t,x)= \frac 12 \Delta u_{\eps}(t,x)+ \gamma(\eps, d) u_{\eps}(t,x) {\dot B}_{\eps}(t,x)\d t, \quad\mbox{with  } \gamma(\eps,d)= \gamma \eps^{\frac{d-2}2},
\end{equation}
and with the initial condition is $u_\eps(0,x)=1$. Here ${\dot B}_\eps(t,x)= ({\dot B} \star \kappa_\eps)(t,x)=\int_{\R^d} \kappa_\eps(x-y) {\dot B}(t,y) \d y$ is a (spatially) mollified noise in $d\geq 3$
and $\kappa_\eps=\eps^{-d}\kappa(x/\eps)$ with $\kappa$ as before, so that $\int_{\R^d} \kappa_\eps(x) \d x=1$ and $\kappa_\eps(x)\d x \Rightarrow \delta_0$ weakly as probability measures as $\eps\to 0$.  By Feynman-Kac formula, the solution to \eqref{she} is given by
\begin{equation}\label{FK-SHE}
u_\eps(t,x)= \bE_x\bigg[\exp\bigg\{ \gamma(\eps,d) \int_0^{t} \int_{\R^d} \kappa_\eps\big(\omega_{s}- y\big) \, {\dot B}(t- s, \d y) \d s - \frac {\gamma(\eps,d)^2 t } 2  (\kappa_\eps\star\kappa_\eps)(0)\bigg\}\bigg].
\end{equation}
This solution (resp. the Cole-Hopf solution $h_\eps:= \log u_\eps$ of the \textit{Kardar-Parisi-Zhang} equation in $d\geq 3$)
is directly linked with the total mass $\mathscr Z_{\gamma,T}$ (resp. the free energy $\log \mathscr Z_{\gamma,T}$) of the GMC measure \eqref{MT}, and this link was first observed and used in \cite{MSZ16} (see also \cite{M17,CCM19,CCM19-II,BM19,BM19-II,CNN20,LZ20} for further progress). The above representation \eqref{FK-SHE}  leads to the renormalized GMC probability measure 
\begin{equation}\label{Mbar}
\begin{aligned}
\overline{\mathscr M}_{\gamma,\eps,t}^{\ssup x}(\d\omega) 
&= \frac 1 {\mathscr Z_{\gamma,\eps,t}} \exp\bigg\{\gamma\eps^{\frac{d-2}2} \int_0^t \int_{\R^d} \kappa_\eps\big(\omega_{s}- y\big) \, {\dot B}(t- s, \d y) \d s  
- \frac {\gamma^2 \eps^{d-2} t } 2  (\kappa_\eps\star\kappa_\eps)(0)\bigg\} \bP_x(\d\omega)
\end{aligned} 
\end{equation}
with $\mathscr Z_{\gamma,\eps,t}$ being the normalizing constant, as usual. 
Here is our next result, for which we will also write $\overline{\mathscr M}_{\gamma,\eps,t}=\overline{\mathscr M}_{\gamma,\eps,t}^{\ssup 0}$.
\begin{cor}[Exponential decay of GMC corresponding to SHE in $d\geq 3$]\label{Theorem 3}
 Fix $d\geq 3$. Then for any $\gamma>0$ and $t>0$, 
	$$
	\limsup_{\eps\downarrow 0} \sup_{\varphi\in\mathscr C_{t}} \eps^2 \log\overline{\mathscr M}_{\gamma, \eps, t}\big[\mathscr N_{\eps,t}(\varphi)\big]\leq (- \Theta t) 
	\qquad \mbox{in }\P-\,\mbox{probability},
	$$
	with $\Theta>0$ determined in Theorem \ref{Theorem 2 new}, and with $\mathscr N_{\eps,t}(\varphi)=\{\omega\in \mathscr C_t\colon \|\omega-\varphi\|_{\infty,t} \leq \eps\}$.	
	\end{cor}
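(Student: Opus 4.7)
The plan is to reduce Corollary~\ref{Theorem 3} to Theorem~\ref{Theorem 2 new} by a Brownian diffusive rescaling: the SHE-type GMC $\overline{\mathscr M}_{\gamma,\eps,t}$ at spatial mollification scale $\eps$ is, after a deterministic rescaling of path and noise, precisely the Wiener-path GMC $\hP_{\gamma,T}$ at temporal horizon $T=t/\eps^2$, and the prefactor $\gamma\eps^{(d-2)/2}$ in \eqref{Mbar} is exactly calibrated so that the two Radon--Nikodym densities match after this change of variables.

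Concretely, I would set $T:=t/\eps^2$, $\tilde\omega_u:=\eps^{-1}\omega_{\eps^2 u}$ for $u\in[0,T]$, and introduce the rescaled noise $\tilde{\dot B}(u,\tilde y):=\eps^{(d+2)/2}\dot B(\eps^2 u,\eps\tilde y)$. A direct covariance computation confirms that $\tilde{\dot B}$ is again space-time white noise on $\R_+\times\R^d$, while $\tilde\omega$ remains a standard Brownian motion under $\bP_0$. Using $\kappa_\eps(x)=\eps^{-d}\kappa(x/\eps)$ together with $(\kappa_\eps\star\kappa_\eps)(0)=\eps^{-d}(\kappa\star\kappa)(0)$, the substitution $s=\eps^2 u$, $y=\eps\tilde y$ turns the exponent in \eqref{Mbar} into
\begin{equation*}
\gamma\int_0^T\!\!\int_{\R^d}\kappa(\tilde\omega_u-\tilde y)\,\tilde{\dot B}(T-u,\d\tilde y)\,\d u\;-\;\tfrac{\gamma^2 T}2(\kappa\star\kappa)(0),
\end{equation*}
which is precisely the exponent defining $\hP_{\gamma,T}$ in \eqref{MT}, modulo time-reversal of the noise on $[0,T]$; since white noise on a finite horizon is invariant under time-reversal, this yields the distributional identity $\overline{\mathscr M}_{\gamma,\eps,t}(\,\cdot\,;\dot B)\stackrel{d}{=}\hP_{\gamma,T}(\,\cdot\,;\tilde{\dot B})$ under the bijection $\omega\leftrightarrow\tilde\omega$. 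Since $\|\omega-\varphi\|_{\infty,t}=\eps\|\tilde\omega-\tilde\varphi\|_{\infty,T}$ with $\tilde\varphi_u:=\eps^{-1}\varphi_{\eps^2 u}$ and $\varphi\mapsto\tilde\varphi$ is a bijection $\mathscr C_t\to\mathscr C_T$, the ball $\mathscr N_{\eps,t}(\varphi)$ is transported exactly onto $\mathscr N_1(\tilde\varphi)$, giving
\begin{equation*}
\sup_{\varphi\in\mathscr C_t}\eps^2\log\overline{\mathscr M}_{\gamma,\eps,t}\bigl[\mathscr N_{\eps,t}(\varphi)\bigr]\;\stackrel{d}{=}\;\tfrac{t}{T}\sup_{\tilde\varphi\in\mathscr C_T}\log\hP_{\gamma,T}\bigl[\mathscr N_1(\tilde\varphi)\bigr].
\end{equation*}

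Theorem~\ref{Theorem 2 new}(1) applied with $r=1$ gives $\P$-a.s. $\limsup_{T\to\infty}T^{-1}\sup_{\tilde\varphi}\log\hP_{\gamma,T}[\mathscr N_1(\tilde\varphi)]\leq-\Theta$, which combined with the distributional identity above implies, for every $\delta>0$, that $\P\bigl(\sup_\varphi\eps^2\log\overline{\mathscr M}_{\gamma,\eps,t}[\mathscr N_{\eps,t}(\varphi)]>-\Theta t+\delta\bigr)\to 0$ as $\eps\downarrow 0$, which is the claimed convergence in $\P$-probability. The delicate point I anticipate is the stochastic-calculus bookkeeping for the noise rescaling -- verifying that $\tilde{\dot B}$ is a bona-fide white noise of the same law as $\dot B$ so that the Itô integrals transform as indicated, and handling simultaneously the $\eps^{(d-2)/2}$ prefactor and the $\eps^{-d}$ blow-up of $\kappa_\eps$. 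Once that is in place, the downgrade from an almost-sure statement (in the fixed noise $\dot B$) to an in-probability statement here is automatic, precisely because the rescaling $\dot B\mapsto\tilde{\dot B}$ itself depends on $\eps$ and one cannot upgrade the conclusion to an almost-sure statement in the original noise.
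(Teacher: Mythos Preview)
Your proposal is correct and follows essentially the same route as the paper: both arguments reduce the SHE-GMC to the Wiener-path GMC $\hP_{\gamma,T}$ with $T=t/\eps^2$ via the Brownian diffusive rescaling $\omega_s\mapsto\eps^{-1}\omega_{\eps^2 u}$ together with the white-noise scaling (the paper phrases this as the distributional identity \eqref{dist-identity} and the ball identity \eqref{Brownian scaling}, which is exactly your bijection $\mathscr N_{\eps,t}(\varphi)\leftrightarrow\mathscr N_1(\tilde\varphi)$), and then invoke Theorem~\ref{Theorem 2 new} at $r=1$. Your observation that the $\eps$-dependent noise rescaling forces the downgrade from almost-sure to in-probability is precisely why the paper states the conclusion in $\P$-probability.
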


Let us now turn to the regime when $\gamma$ is chosen to be large. In this regime, 
for localization results on the {\it endpoint distributions} ${\mathbb Q}_{\gamma,t}=\hP_{\gamma,t}[\omega_t\in \cdot]$
we refer to \cite{CSY03,V07,CC13,BC16,BM19-II}. For the discrete lattice, for log-correlated Gaussian fields (e.g. for $2d$ discrete GFF) it has been shown in \cite{AZ14, AZ15,MRV16} that for large $\gamma$, the normalized covariance of two points sampled from the Gibbs measure is either $0$ or $1$ and the joint distribution of the Gibbs weights converges in a suitable sense to that of a Poisson-Dirichlet variable and similar results can be found in \cite{BC19} for general Gaussian disordered systems in the lattice setting. 
The following result extends such statements concerning overlap localization to the current GMC set up. 
\begin{theorem}
	\label{localization via overlaps}
	Let $\mathrm{Cov}_T(\omega,\omega^\prime)= \frac 1 {T (\kappa\star\kappa)(0)}\mathbf E[\mathscr H_T(\omega)\mathscr H_T(\omega^\prime)]$ (recall \eqref{H0}). 
	Fix $\gamma>0$ such that $\gamma$ is a point of differentiability of $\lambda(\gamma):= \inf_{\vartheta\in{\mathfrak m_\gamma}}[\frac{\gamma^2 (\kappa\star\kappa)(0)}2 - {\mathcal E}_{F_\gamma}(\vartheta)]$ (recall \eqref{def-Phi}) and moreover assume that $\lambda^\prime(\gamma)<\gamma (\kappa\star\kappa)(0)$. Then for every $\eps> 0$ there exist $\delta,T_0>0$ and an integer $k\in\N$ and $\omega^{\ssup{1}},...,\omega^{\ssup{k}}\in\mathscr C_\infty$ such that
	$$
	\P\bigg[ \hP_{\gamma,T} \bigg(\bigcup_{i=1}^k \mathrm{Cov}_T(\omega^{\ssup i}, \omega^{\ssup{k+1}}) \geq \delta\bigg) \geq 1-\eps\bigg]\geq 1-\eps 
	$$ 
	for all $T\geq T_0$.
\end{theorem}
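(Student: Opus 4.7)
The approach follows the standard paradigm for overlap-localization theorems in Gaussian disordered systems (Talagrand, Bovier--Klimovsky, Arguin--Zindy, etc.). The starting point is a Gaussian integration-by-parts (IBP) identity. Setting $p_T(\gamma) := T^{-1}\bE[\log \mathscr Z_{\gamma,T}]$, differentiating in $\gamma$ gives $p_T^\prime(\gamma) = T^{-1}\bE\bigl[\bE^{\hP_{\gamma,T}}[\mathscr H_T(\omega)]\bigr] - \gamma(\kappa\star\kappa)(0)$. Applying Gaussian IBP to the first term (using $\bE[\mathscr H_T(\omega)\mathscr H_T(\omega^\prime)] = T(\kappa\star\kappa)(0)\mathrm{Cov}_T(\omega,\omega^\prime)$) yields
\[
p_T^\prime(\gamma) = -\gamma(\kappa\star\kappa)(0)\,\overline q_T(\gamma), \qquad \overline q_T(\gamma) := \bE\bigl[\bE^{\hP^{\otimes 2}_{\gamma,T}}[\mathrm{Cov}_T(\omega,\omega^\prime)]\bigr].
\]

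The auxiliary function $\gamma \mapsto p_T(\gamma) + \gamma^2(\kappa\star\kappa)(0)/2 = T^{-1}\log \bE_0[e^{\gamma \mathscr H_T}]$ is convex in $\gamma$ as a log-moment generating function, and the analysis underpinning Theorem \ref{Theorem 2 new} supplies $\lim_T p_T(\gamma) = -\lambda(\gamma)$. Since $\lambda$ is differentiable at the given $\gamma$, the standard convex-analytic transfer of differentiability yields $p_T^\prime(\gamma) \to -\lambda^\prime(\gamma)$, so $\overline q_T(\gamma) \to \overline q := \lambda^\prime(\gamma)/(\gamma(\kappa\star\kappa)(0))$. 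The hypothesis $\lambda^\prime(\gamma) < \gamma(\kappa\star\kappa)(0)$ then forces $\overline q < 1$, whereas in the strong-disorder regime under consideration (cf. Remark \ref{remark5}) one also has $\overline q > 0$. Borell--TIS concentration of $\log \mathscr Z_{\gamma,T}$ (Lipschitz in the white noise with constant $\gamma\sqrt{T(\kappa\star\kappa)(0)}$) upgrades this to the quenched statement $\bE^{\hP^{\otimes 2}_{\gamma,T}}[\mathrm{Cov}_T] \to \overline q$ in $\P$-probability.

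With $\overline q \in (0,1)$, a Paley--Zygmund inequality produces $\delta = \delta(\overline q) > 0$ for which $\hP^{\otimes 2}_{\gamma,T}[\mathrm{Cov}_T \geq \delta] \geq \delta$ with $\P$-probability at least $1 - \epsilon/2$ for $T$ large. A greedy selection then picks $\omega^{\ssup{1}}$ maximizing $\hP_{\gamma,T}\bigl(\{\omega : \mathrm{Cov}_T(\omega^{\ssup{1}},\omega) \geq \delta\}\bigr)$, then $\omega^{\ssup{2}}$ maximizing the same measure among paths outside the first tube, and so on; a Chebyshev-type upper bound on the mass of each tube (exploiting $\overline q < 1$), combined with a pigeonhole argument, forces $\bigcup_{i=1}^k \{\omega : \mathrm{Cov}_T(\omega^{\ssup{i}},\omega) \geq \delta\}$ to carry GMC-mass $\geq 1 - \epsilon$ after only $k = k(\epsilon,\delta)$ steps.

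The main obstacle is that the theorem demands a single finite family $\omega^{\ssup{1}},\ldots,\omega^{\ssup{k}} \in \mathscr C_\infty$ that works for \emph{all} $T \geq T_0$ simultaneously, whereas the greedy scheme above a priori yields $T$-dependent representatives. A diagonal/compactness extraction resolves this: the tightness of the GMC on compact time intervals, combined with the compact, translation-invariant structure of the maximizer set $\mathfrak m_\gamma \subset \Mcal_1(\X)$ supplied by Proposition \ref{prop-m}, permits the identification of finitely many asymptotic ``pure states'' (atoms of a limiting Gibbs state on $\X$) from which representative paths $\omega^{\ssup{i}} \in \mathscr C_\infty$ are extracted. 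Demonstrating that these representatives deliver the covering $T$-uniformly rather than only along subsequences is the principal technical step, and it is precisely here that the compactification framework of \cite{MV14} plays the decisive role.
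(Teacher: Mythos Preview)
Your proposal contains two genuine gaps and a misreading of the statement.

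\textbf{Misreading of the theorem.} You interpret the $\omega^{\ssup{1}},\dots,\omega^{\ssup{k}}$ as fixed deterministic paths that must work uniformly in $T\geq T_0$, and you devote your final paragraph to extracting them via the compactification $\X$. In the paper, however, the $\omega^{\ssup{i}}$ are integration variables: what is actually proved is
\[
\P\Big[\widehat{\bE}_{\gamma,T}^{\otimes(k+1)}\Big(\textstyle\bigcup_{i=1}^k\{\mathrm{Cov}_T(\omega^{\ssup i},\omega^{\ssup{k+1}})\geq\delta\}\Big)\geq 1-\eps\Big]\geq 1-\eps,
\]
i.e.\ $\omega^{\ssup{1}},\dots,\omega^{\ssup{k}}$ are i.i.d.\ samples from $\hP_{\gamma,T}$, not $T$-independent deterministic paths. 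Your compactness extraction is therefore addressing a nonexistent difficulty, and the space $\X$ plays \emph{no role} in the paper's proof of this theorem.

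\textbf{The quenched upgrade is a real gap.} You correctly derive the annealed identity $p_T'(\gamma)=-\gamma V(0)\,\overline q_T(\gamma)$ via Gaussian integration by parts (though your limit formula $\overline q=\lambda'(\gamma)/(\gamma V(0))$ has a sign error; the correct limit is $1-\lambda'(\gamma)/(\gamma V(0))$, so the hypothesis gives $\overline q>0$, not $\overline q<1$). The real issue is your claim that Borell--TIS concentration of $\log\mathscr Z_{\gamma,T}$ ``upgrades this to the quenched statement $\widehat{\bE}_T^{\otimes}[\mathrm{Cov}_T]\to\overline q$ in $\P$-probability.'' Borell--TIS controls fluctuations of the \emph{free energy}, not of the \emph{Gibbs overlap}, which is a nonlinear functional related to a derivative of the free energy. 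Concentration of a convex function does not yield concentration of its derivative without further input. This is precisely the step where the paper departs from your outline: it introduces an Ornstein--Uhlenbeck flow $\boldsymbol{\dot B}_r$ on the white noise, proves a Poincar\'e inequality $\mathrm{Var}\big(t^{-1}\int_0^t\mathscr L f_T(\boldsymbol{\dot B}_r)\,\d r\big)\leq 2t^{-1}\E\|Df_T\|^2$ via Malliavin calculus, and uses this (combined with technical approximation lemmas, Proposition~4.2 in the paper) to show that the \emph{quenched} time-averaged overlap is bounded away from zero with high $\P$-probability. This flow-based variance bound is the key technical idea you are missing; your proposed substitute does not deliver it.

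\textbf{Final step.} Once the quenched lower bound on the overlap is available, the paper finishes with the same Paley--Zygmund device you mention, applied to $k+1$ i.i.d.\ GMC samples---no greedy selection, no pigeonhole, and no need for $\overline q<1$.
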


\begin{remark}\label{remark5}
	Assuming that $\lambda(\gamma)$ is differentiable at $\gamma$, we always have $\lambda^\prime(\gamma) \leq \gamma (\kappa\star\kappa)(0)$ and 
	actually for $\gamma<\gamma_1$ (with $\gamma_1$ from Proposition \ref{prop-m}), we have $\lambda^\prime(\gamma)=\gamma (\kappa\star \kappa)(0)$ (cf. Proposition \ref{prop3}, part (ii)). However, the requirement concerning the strict bound  $\lambda^\prime(\gamma)<\gamma (\kappa\star\kappa)(0)$ in Theorem \ref{localization via overlaps} is related to $\gamma$ being large, at least as large as $\gamma_1$. 
	Therefore, if $\gamma$ is sufficiently large such that Theorem \ref{localization via overlaps} holds, we necessarily have $\gamma\geq \gamma_1$ -- that is, Theorem \ref{localization via overlaps} can not hold if $\gamma \in (0,\gamma_1)$ in $d\geq 3$, which is the regime in which $\mathscr H_T$ attains large values under $\hP_{\gamma,T}$, as shown in Theorem \ref{Theorem 2 new}, part (2).\qed
\end{remark}

\subsection{Outline of the proofs.}\label{sec:outline}
For the convenience of the reader, we briefly describe the main ingredients of the proof. 
Showing the estimate \eqref{esti} and the identity \eqref{eq-thick} in Theorem \ref{Theorem 2 new} rely on a large deviation route combined with tools from Malliavin calculus. First, 
the free energy $\log \bE_0[\e^{\gamma \mathscr H_T} \1_A]$ on the path space is decomposed using It\^o calculus into a martingale and an integral functional $F(\mu_t)= \int\int V(x-y) \mu_t(\d x) \mu_t (\d y)$ of the GMC distribution $\mu_t:=\hP_t[\omega_t \in \cdot]$. The first key observation is that, while $\mu_t$ is itself a probability measure on $\R^d$, the function $ F(\cdot)$ depends {\it only} on its {\it orbit} $\widetilde\mu_t=(\mu_t\star\delta_x)_{x\in \R^d}$. These orbits are elements of the quotient space $\widetilde\Mcal_1= \Mcal_1/\sim$ of probability measures $\mu\in \Mcal_1=\Mcal_1(\R^d)$ (here $\mu\sim \nu$ if and only if $\nu=\mu\star \delta_x$ for some $x\in \R^d$). However, since both spaces $\Mcal_1$ as well as its quotient $\widetilde\Mcal_1$ are non-compact in the (quotient) weak topology, the functional $F$ does not retain any continuity property (even when it is extended to $\widetilde\Mcal_1$). For the same reasons, a Markovian semigroup evolved by $\mu_t$ (which was constructed in \cite{BC16} for proving localization of discrete directed polymers$^{\star\star}$\footnote{$^{\star\star}$This approach also used the topology of \cite{MV14} but worked with a different (but equivalent) metric which works well in the discrete lattice. Presently in the continuum. we employ the metric directly from \cite{MV14}.}) also does not yield an invariant measure in $\widetilde\Mcal_1$. This is where the construction of \cite{MV14} becomes useful: the quotient $\widetilde\Mcal_1(\R^d)$ can be embedded in the compactified space $\X$, which, in contrast to $\widetilde\Mcal_1$, is now sufficiently large to contain {\it all} limit points of the GMC distribution $\mu_t \in \Mcal_1(\R^d)$ (but still sufficiently small so that the quotient $\widetilde\Mcal_1(\R^d)$ is densely embedded in $\X$)$^\diamond$\footnote{$^\diamond$For example, let $\mu_n$ be a Gaussian mixture $\frac 13 N(n,1)+ \frac 13 N(-n,1) + \frac 13 N(0,n)$. While neither $\mu_n$, nor any of its components, has a weak limit in $\Mcal_1(\R^d)$, the sequence $(\widetilde\mu_n)_n\subset \widetilde{\Mcal}_1(\R^d)\hookrightarrow \X$ converges (in the topology of $\X$) along a subsequence 
to the element $\xi=(\widetilde\alpha_1,\widetilde\alpha_1)\in \X$, where $\widetilde\alpha_1$ is the orbit any Gaussian (i.e., with arbitrary mean)  with variance $1$.}. By construction,  one can then  ``lift" the above functional $F(\cdot)$ (recall \eqref{def-Phi}) as well as the Markovian dynamics on this compactification $\X$. The topology of $\X$ (resulting from leveraging the metric directly from \cite{MV14}) then guarantees an invariant measure of the semigroup. Together with the required continuity properties of the aforementioned functionals on $\X$, we then deduce the desired exponential decay of the GMC volume $\sup_{\varphi\in \mathscr C_T} \hP_T[\mathscr N_r(\varphi)]$, with a decay rate which is given by the variational formula on the compactified space $\X$ which always admits minimizer(s) in $\Mcal_1(\X)$ (this minimizer is unique when $\gamma>0$ is small and $d\geq 3$ which further simplifies the variational formula).  The existence of thick points \eqref{eq-thick} then follow from the above arguments together with some tools from Malliavin calculus. 
It should be mentioned that the current approach  does not rely on sub-additivity arguments which have been previously used as a powerful tool in obtaining deep results about the free energy of directed polymers \cite{CY06}). Here, the uniform exponential decay of the GMC volume (for {\it any} temperature $\gamma$) results from rather explicit variational formulas involving functionals over (the space of probability measures on) $\X$. From the viewpoint of the discussion in Section \ref{sec-motivation}),  the decay rates here are the analogues of the scaling exponents of the Liouville measure.

 For proving Theorem \ref{localization via overlaps} we also transport tools from Malliavin calculus to the current set up: 
 define the (infinite dimensional) Ornstein-Uhlenbeck operator $\mathscr L= - \delta \circ D$ on the
 abstract Wiener space $(\mathcal E, \mathcal F, \mathbf P)$ (with $D$ being the Malliavin derivative and $\delta$ being ``divergence" acting as an adjoint of $D$) and deduce a Poincar\'e inequality $\mathbf{Var}\big(\int_0^t \mathscr L f_T(\boldsymbol{\dot B}_r)\d r\big) \leq 2 t \mathbf E\big( \| D f_T({\dot B})\|^2_{L^2([0,T]\otimes \R^d)}\big)$
 for $f_T(\gamma,{\dot B})=\frac 1 T \log Z_{\gamma,T}= \frac 1 T \log \bE_0[\exp\{\gamma \mathscr H_T(\omega,{\dot B})\}]$. Here 
 $\boldsymbol{\dot B}_t(s,x)= \e^{-t} {\dot B}(s,x)+ \e^{-t} \eta((\e^{2t}-1)^{-1}s,x)$
  is a {\it white noise flow}, with $\eta$ being an independent copy of the space-time white noise ${\dot B}$. Applying Chebyshev's inequality, we then get a bound 
  $\P\big[\frac 1 t\int_0^t (\mathscr L f_T)(\boldsymbol{\dot B}_r) \d r >\gamma\eps/2] \leq C (t T\eps^{2})^{-1}$ for all $\eps, t>0$, and together with
 the assumption that $\lambda(\gamma):= \lim_{T\to\infty} f_T(\gamma,{\dot B})$ is differentiable, and $\lambda^\prime(\gamma)<  \gamma (\kappa\star \kappa)(0)$, we obtain a localization property showing that for $t=t(\eps,\gamma)$ sufficiently large, 
 $\liminf_{T\to\infty} \P\big[\frac 1 {t/T}\int_0^{t/T}\!\! \d r\,\,\frac{1}{T} \int_0^T\!\!\d s\,\, \widehat{\bE}_{\gamma,T}^{\ssup{\boldsymbol{\dot B}_r}^{\otimes}} \big[(\kappa\star \kappa)(\omega_s-\omega_s^\prime)\big] \geq \alpha\big] \geq 1- \frac{\eps}2$. This builds on a recent technique \cite{BC19} developed for showing localization of general Gaussian disordered systems (on discrete lattices). Together with the fact $\widehat\bE_{\gamma,T}[|\mathscr H_T-T\lambda^\prime(\gamma)|] = o(T)$ (which holds true under the imposed hypotheses) we then have the required concentration of the covariance in Theorem \ref{localization via overlaps}.

\smallskip 

\noindent{\bf Organization of the rest of the article:} 
The rest of the article is organized as follows: In Section \ref{sec-spaceX} we collect the properties of the space $\X$, those of the relevant energy functions defined on $\X$ and $\Mcal_1(\X)$ and their Malliavin derivatives, while the proofs of Theorem \ref{Theorem 2 new}- Theorem \ref{localization via overlaps} constitute Section \ref{sec-proofs}.

\noindent{\bf Notation:} For convenience, we will adopt the following notation throughout the sequel concerning the GMC measure defined in \eqref{MT}. 
\begin{align}
\begin{split}\label{notation}
	&\widehat{\mathscr M}_T=\widehat{\mathscr M}_{\gamma,T},\quad \mathscr{Z}_T=\mathscr{Z}_{\gamma,T}=Z_{\gamma,T}\,\,\e^{-\frac{\gamma^2}{2}T(\kappa\star\kappa)(0)}, \quad \mbox{with}\quad	
	 Z_T=Z_{\gamma,T}=\bE_0\big[\e^{\gamma\mathscr H_T}\big] \,\,\,\mbox{and}
	 \\
	 	&\qquad\qquad\qquad  V=\kappa\star \kappa.
\end{split}
\end{align}
Also, unless otherwise specified, expectation with respect to the GMC probability measure $\widehat{\mathscr M}_T$ will be written as 
$\widehat{\bE}_T$. For two independent Brownian motions $\omega,\omega^\prime$, we write for the product GMC probability 
\begin{align}
\begin{split}
\label{Mprod}
\hP_T^\otimes(\d \omega,\d \omega^\prime)&=\frac{1}{\mathscr Z_T^2}\exp\big\{\gamma(\mathscr H_T(\omega)+\mathscr H_T(\omega^\prime))-\gamma^2TV(0)\big\}\bP_0^\otimes(\d \omega,\d \omega^\prime)\\
\end{split}
\end{align} 
and expectation with respect to $\hP_T^\otimes$ will be written as $\bET^\otimes$.
Finally, for any $t>0$, $\mathcal G_t$ will stand for the $\sigma$-algebra generated by the Brownian path $(\omega_s)_{0\leq s \leq t}$. For any $A\in\mathcal G_t$, we will write 
\begin{align}\label{definition Z_T(A)}
	Z_t(A)=\bE_0\big[\1_A\,\,\e^{\gamma\mathscr H_t}\big].
\end{align}

\section{The space $\X$, energy functionals and its Malliavin derivatives.}\label{sec-spaceX}

\subsection{The space $\X$.}\label{spaceX}

We denote by $\Mcal_1= {\Mcal_1}(\R^d)$ (resp., $\Mcal_{\leq 1}$) the space of probability (resp., subprobability) distributions on $\R^d$ and by $\widetilde\Mcal_1= \Mcal_1 \big/ \sim$ the quotient space 
of $\Mcal_1$ under the action of $\R^d$ (as an additive group on $\Mcal_1$), that is, for any $\mu\in \Mcal_1$, its {\it{orbit}} is defined by $\widetilde{\mu}=\{\mu\star\delta_x\colon\, x\in \R^d\}\in \widetilde\Mcal_1$. Then we define 
\begin{equation}\label{eq-space-X}
\X=\Big\{\xi:\xi=\{\widetilde{\alpha}_i\}_{i\in I},\alpha_i\in \mathcal{M}_{\leq 1},\sum_{i\in I}\alpha_i(\R^d)\leq 1\Big\}
\end{equation}
to be the space of all empty, finite or countable collections of orbits of subprobability measures with total masses $\leq 1$. Note that the quotient space $\widetilde\Mcal_1(\R^d)$ is embedded in $\X$ -- that is, for any $\mu\in \Mcal_1(\R^d)$, $\widetilde\mu\in\widetilde\Mcal_1(\R^d)$ and the single orbit element $\{\widetilde\mu\}\in \X$ belongs to $\X$ (in this context, sometimes we will write $\widetilde\mu\in \X$ for $\{\widetilde\mu\}\in \X$). 

The space $\X$ also comes with a metric structure. If for any $k\geq 2$, $\mathcal H_k$ is the space of functions $h:\left(\R^d\right)^k\rightarrow \R$ which are invariant under rigid translations and which vanish at infinity, we define for any $h\in \mathcal{H}=\bigcup_{k\geq 2}\mathcal{H}_k$, the functionals
\begin{equation}\label{Lambda-def}
\mathscr J (h,\xi)=\sum_{\widetilde\alpha\in\xi}\int_{(\R^d)^k }h(x_1,\ldots, x_k)\alpha(\d x_1)\cdots\alpha(\d x_k).
\end{equation} 
A sequence $\xi_n$ is desired to converge to $\xi$ in the space $\X$ if 
$$
\mathscr J(h,\xi_n)\to \mathscr J(h,\xi)\qquad \forall \,\, h\in \mathcal H.
$$
This leads to the following definition of the metric $\mathbf D$ on $\X$.
For any $\xi_1,\xi_2\in \X$, we set 
\begin{align*}
\mathbf{D}(\xi_1,\xi_2)&=\sum_{r=1}^{\infty}\frac{1}{2^r}\frac{1}{1+\lVert h_r\rVert_{\infty}} \bigg|\mathscr J(h_r,\xi_1)- \mathscr J(h_r,\xi_2)\bigg| \\
&=\sum_{r=1}^{\infty}\frac{1}{2^r}\frac{1}{1+\lVert h_r\rVert_{\infty}}\bigg|\sum_{\widetilde{\alpha}\in \xi_1}\int h_r(x_1,...,x_{k_r})\prod_{i=1}^{k_r}\alpha(\mathrm{d}x_i)-\sum_{\widetilde{\alpha}\in \xi_2}\int h_r(x_1,...,x_{k_r})\prod_{i=1}^{k_r}\alpha(\mathrm{d}x_i)\bigg|.
\end{align*}

The following result was proved in \cite[Theorem 3.1-3.2]{MV14}. 
\begin{theorem}\label{thm-compact}
We have the following properties of the space $\X$.
\begin{itemize}
\item $\mathbf D$ is a metric on $\X$ and the space $\widetilde\Mcal_1(\R^d)$ is dense in $(\X,\mathbf D)$.
\item Any sequence in $\widetilde\Mcal_1(\R^d)$
has a convergent subsequence with a limit point in $\X$. Thus, $\X$ is the completion and the compactification of the totally bounded metric space $\widetilde\Mcal_1(\R^d)$ under 
$\mathbf D$.
\item Let a sequence $(\xi_n)_n$ in $\X$ consist of a single orbit $\widetilde\gamma_n$ and $\mathbf D(\xi_n,\xi)\to 0$ where $\xi=(\widetilde\alpha_i)_i\in \X$ such that $\alpha_1(\R^d)\geq \alpha_2(\R^d) \geq \dots$.
	Then given any $\eps>0$, we can find $k\in \N$ such that $\sum_{i>k} \alpha_i(\R^d) <\eps$ and we can write 
		$\gamma_n= \sum_{i=1}^k\alpha_{n,i}+ \beta_n$,  
		such that	
		\begin{itemize}
		\item  for any $i=1,\dots,k$, there is a  sequence $(a_{n,i})_n\subset \R^d$ such that
		\begin{equation*}
		\begin{aligned}
		\alpha_{n,i}\star \delta_{a_{n,i}}  \Rightarrow \alpha_i \quad\mbox{with}\quad \lim_{n\to\infty} \, \inf_{i\ne j}\,\, |a_{n,i}- a_{n,j}| =\infty.
		\end{aligned}
		\end{equation*}
		\item The sequence $\beta_n$ totally disintegrates, meaning for any $r>0$, $\sup_{x\in \R^d} \beta_n\big(B_r(x)\big)\to 0$.\qed
	\end{itemize} 
	\end{itemize}
\end{theorem}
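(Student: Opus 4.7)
The statement consists of three parts: (i) $\mathbf D$ is a metric with $\widetilde\Mcal_1(\R^d)$ dense; (ii) every sequence in $\widetilde\Mcal_1(\R^d)$ has a subsequence converging in $\X$; (iii) the structural decomposition of an orbit sequence converging to a multi-component limit. I would treat them in turn.

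\emph{Metric and density.} Symmetry and the triangle inequality for $\mathbf D$ are immediate from linearity of $\mathscr J(h,\cdot)$. For positive definiteness, I would argue that translation-invariant functions vanishing at infinity separate points of $\X$: given a single orbit $\widetilde\alpha$, the joint $k$-point moments against compactly supported $h\in\mathcal H_k$ determine $\widetilde\alpha$, and the decay at infinity of $h$ ensures that distinct collections of orbits—even ones obtained by sliding components apart—are still separated by some $\mathscr J(h,\cdot)$. Density of $\widetilde\Mcal_1(\R^d)$ in $\X$ follows by an explicit construction: for $\xi=\{\widetilde\alpha_i\}_{i\in I}\in\X$ with total mass $M=\sum_i\alpha_i(\R^d)\leq 1$, pick mutually divergent translation vectors $a_{n,i}$ and an extra point $b_n$ far from all of them, and set $\mu_n=\sum_i\alpha_i\star\delta_{a_{n,i}}+(1-M)\beta\star\delta_{b_n}$ with $\beta$ a fixed reference probability. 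Because each $h_r\in\mathcal H_{k_r}$ vanishes at infinity, cross-terms in $\mathscr J(h_r,\widetilde{\mu_n})$ between distinct translates vanish and the diagonal contributions reproduce $\mathscr J(h_r,\xi)$.

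\emph{Sequential compactness via concentration-compactness.} Given $(\widetilde\mu_n)_n\subset\widetilde\Mcal_1$, I would apply the P.-L.\ Lions dichotomy to the concentration function $Q_n(r)=\sup_{x\in\R^d}\mu_n(B_r(x))$. A diagonal extraction yields a subsequence along which $Q_n(r)\to Q_\infty(r)$ for every rational $r$; set $m_1=\lim_{r\to\infty}Q_\infty(r)$. If $m_1=0$ then $\widetilde\mu_n\to\emptyset\in\X$ by vanishing. Otherwise, choose $a_{n,1}$ approximately realising the supremum, so that $\mu_n\star\delta_{-a_{n,1}}$ concentrates mass $m_1$ on a bounded set and, along a further subsequence, converges weakly to some $\alpha_1\in\Mcal_{\leq 1}$ with $\alpha_1(\R^d)=m_1$. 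Iterating the dichotomy on the residual $\mu_n-(\alpha_1\star\delta_{a_{n,1}})\wedge\mu_n$ extracts orbits $\widetilde\alpha_i$ with masses $m_1\geq m_2\geq\cdots$ summing to at most $1$, so only finitely many $m_i$ exceed any fixed $\eps>0$ and $\xi=\{\widetilde\alpha_i\}_i\in\X$. Convergence $\mathbf D(\widetilde\mu_n,\xi)\to 0$ reduces to checking $\mathscr J(h,\widetilde\mu_n)\to\mathscr J(h,\xi)$ for each fixed $h\in\mathcal H$; this follows by splitting the integral into contributions from the extracted translates and the residual, and invoking decay at infinity of $h$ together with total disintegration of the residual.

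\emph{Structural decomposition and main obstacle.} For the third bullet, the same iterative extraction applied to $(\gamma_n)_n$ produces $\gamma_n=\sum_{i=1}^k\alpha_{n,i}+\beta_n$ with $\alpha_{n,i}\star\delta_{a_{n,i}}\Rightarrow\alpha_i$ for $i\leq k$ and $\sup_x\beta_n(B_r(x))\to 0$ for every $r>0$. The genuinely delicate point—and the main obstacle—is establishing the mutual divergence $\inf_{i\ne j}|a_{n,i}-a_{n,j}|\to\infty$, reconciled with the ambiguity that each orbit representative is defined only up to a global translation. This must be forced by the maximality in the iteration: if along a subsequence two of the $a_{n,i},a_{n,j}$ stayed at bounded distance, the corresponding concentrated masses would merge into a single cluster of mass strictly larger than the one extracted at the later of the two steps, contradicting the approximate maximisation of the concentration function at that step. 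The vanishing-at-infinity property of the test functions in $\mathcal H$ is precisely what forces $\X$ to see distinct clusters as genuinely separate components, making this internal consistency possible.
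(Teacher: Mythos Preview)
The paper does not give its own proof of this theorem: it is quoted from \cite[Theorem 3.1--3.2]{MV14} (note the \qed placed inside the statement). Your outline, based on Lions-type concentration--compactness iterated greedily on the concentration function, is precisely the strategy of that reference, so in spirit you are reproducing the cited proof rather than offering an alternative.

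There is, however, a genuine slip in your density argument. Placing the missing mass $1-M$ as $(1-M)\,\beta\star\delta_{b_n}$ with a \emph{fixed} reference probability $\beta$ does not make this piece disintegrate: after passing to orbits it contributes the fixed orbit $\widetilde{(1-M)\beta}$ in the limit, so $\widetilde{\mu_n}\to \xi\cup\{\widetilde{(1-M)\beta}\}\neq\xi$ whenever $M<1$. The extra mass must be forced to totally disintegrate, e.g.\ by spreading it over $n$ mutually diverging atoms of mass $(1-M)/n$, or by using a Gaussian with variance $\to\infty$; then each $h\in\mathcal H$, being translation-invariant and vanishing at infinity, sees a vanishing contribution from this piece and $\mathscr J(h,\widetilde{\mu_n})\to\mathscr J(h,\xi)$ as desired. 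Your sketch of positive definiteness (``$\mathcal H$ separates points of $\X$'') is also rather thin---the nontrivial content is that the full family $\{\mathscr J(h,\cdot):h\in\mathcal H\}$ determines each orbit in the collection individually, which in \cite{MV14} is handled by a careful choice of test functions---but the remaining parts of your plan (iterative extraction, divergence of centers via maximality, and total disintegration of the residual) are correct and match the original argument.
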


\subsection{Energy functionals on $\X$.} 

We now define some key functionals on the space $\X$ which will be quite useful in the present context. First, we set 
 $F_\gamma:\X\rightarrow \R$ to be
\begin{align}\label{functional Phi}
F_\gamma(\xi)=\frac{\gamma^2}{2}\sum_{i\in I}\int_{\R^d\times\R^d}V(x_1-x_2)\prod_{j=1}^2\alpha_i(\mathrm{d}x_j), \qquad \xi=(\widetilde{\alpha}_i)_{i\in I}.
\end{align}
 Because of shift-invariance of the integrand in \eqref{functional Phi}, $F_\gamma$ is well-defined on $\X$. Moreover, we have 
 \begin{lemma}\label{lemma-Phi}
 $F_\gamma$ is continuous and non-negative on $\X$, and $F_\gamma(\cdot)\leq \frac{\gamma^2}{2}V(0)$.
 \end{lemma}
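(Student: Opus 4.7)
The plan is to handle the three assertions — non-negativity, the uniform upper bound, and continuity — essentially independently, with the continuity following from the very definition of the topology on $\X$ once we identify $F_\gamma$ as $\mathscr J(h,\cdot)$ for a suitable $h\in\mathcal H_2$.

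For non-negativity, I would simply observe that $V=\kappa\star\kappa\geq 0$ since $\kappa\geq 0$ by assumption. Hence each summand $\int V(x_1-x_2)\,\alpha_i(\d x_1)\alpha_i(\d x_2)$ is non-negative, and so is $F_\gamma(\xi)$. For the upper bound, the key is that $V$ attains its maximum at the origin: by Cauchy--Schwarz, for every $x\in\R^d$,
\begin{equation*}
V(x)=\int \kappa(y)\kappa(x-y)\,\d y \;\leq\; \Big(\!\int\kappa(y)^2\d y\Big)^{1/2}\Big(\!\int\kappa(x-y)^2\d y\Big)^{1/2}=\|\kappa\|_2^2=V(0).
\end{equation*}
Combining this with $\alpha_i(\R^d)\leq 1$ (so $\alpha_i(\R^d)^2\leq \alpha_i(\R^d)$) and the constraint $\sum_{i\in I}\alpha_i(\R^d)\leq 1$ defining $\X$, we obtain
\begin{equation*}
F_\gamma(\xi)\leq \frac{\gamma^2}{2}V(0)\sum_{i\in I}\alpha_i(\R^d)^2\leq \frac{\gamma^2}{2}V(0)\sum_{i\in I}\alpha_i(\R^d)\leq \frac{\gamma^2}{2}V(0),
\end{equation*}
which is the claimed bound.

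Continuity is the only substantive point, and my plan is to reduce it to the topology of $\X$ as recalled in Theorem \ref{thm-compact}. Set $h(x_1,x_2):=V(x_1-x_2)$. Since $\kappa$ is smooth and supported in $B_{1/2}(0)$, the convolution $V=\kappa\star\kappa$ is smooth and supported in $B_1(0)$; thus $h$ is continuous, manifestly invariant under simultaneous rigid translations of $(x_1,x_2)$, and vanishes at infinity in the translation-invariant sense (namely $|x_1-x_2|\to\infty$ forces $h\to 0$ because $V$ has compact support). Therefore $h\in\mathcal H_2\subset\mathcal H$, and by the very definition of convergence on $\X$, $\xi_n\to\xi$ implies $\mathscr J(h,\xi_n)\to\mathscr J(h,\xi)$. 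Since $F_\gamma=\tfrac{\gamma^2}{2}\mathscr J(h,\cdot)$ by \eqref{functional Phi}, this yields continuity of $F_\gamma$ on $\X$.

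The only step that requires a small check is the equivalence between convergence in the explicit metric $\mathbf D$ (built from a fixed countable family $(h_r)\subset\mathcal H$) and convergence of $\mathscr J(h,\cdot)$ for every $h\in\mathcal H$; this is the content of \cite{MV14} recalled before Theorem \ref{thm-compact}, so I may invoke it directly. No further obstacle is anticipated, since neither the sum being over countably many orbits nor the possibility that $\sum_i\alpha_i(\R^d)<1$ interferes with the test-function characterization: a potential loss of mass in a limit $\xi_n\to\xi$ corresponds to sequences that ``totally disintegrate'' in the sense of Theorem \ref{thm-compact}, and for such pieces the contribution to $\mathscr J(h,\cdot)$ vanishes precisely because $h$ vanishes at infinity.
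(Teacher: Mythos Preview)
Your proposal is correct and follows essentially the same approach as the paper: non-negativity from $V\geq 0$, the upper bound via Cauchy--Schwarz yielding $V(\cdot)\leq V(0)=\|\kappa\|_2^2$ together with $\sum_i\alpha_i(\R^d)^2\leq\sum_i\alpha_i(\R^d)\leq 1$, and continuity by recognizing $F_\gamma=\tfrac{\gamma^2}{2}\mathscr J(h,\cdot)$ for $h(x_1,x_2)=V(x_1-x_2)\in\mathcal H_2$. The paper simply cites \cite[Corollary~3.3]{MV14} for the continuity step, which is exactly the statement you unpack; your more detailed justification is welcome but not materially different.
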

 \begin{proof}
For the continuity of $F_\gamma$, we refer to \cite[Corollary 3.3]{MV14}. Recall that $V=\kappa\star \kappa$ and $\kappa$ is rotationally symmetric. Hence, for any $\alpha\in \Mcal_{\leq 1}(\R^d)$, by Cauchy-Schwarz inequality, 
\begin{equation}\label{function Phi-2}
\begin{aligned}
\int_{\R^{2d}}& V(x_1-x_2) \, \alpha(\d x_1)\, \alpha(\d x_2) =\int_{\R^{2d}} \alpha(\d x_1)\alpha(\d x_2) \, \int_{\R^d} \d z \, \kappa(x_1-z) \, \kappa(x_2-z) \\
&\leq \int_{\R^{2d}} \alpha(\d x_1)\alpha(\d x_2) \, \bigg[\int_{\R^d} \d z \kappa^2(x_1-z)\bigg]^{1/2}  \,\, \bigg[\int_{\R^d} \d z \kappa^2(x_2-z)\bigg]^{1/2} \leq \alpha\big(\R^d\big)^2 \|\kappa\|_2^2. 
\end{aligned}
\end{equation}
Thus, $F_\gamma(\xi)\leq \frac{\gamma^2 (\kappa\star\kappa)(0)}2 \sum_{i\in I} (\alpha_i(\R^d))^2 \leq \frac{\gamma^2 (\kappa\star\kappa)(0)}2$ since for $\xi=(\widetilde\alpha_i)_{i\in I} \in \X$ we have $\sum_{i\in I} \alpha_i(\R^d)\leq 1$. 
Moreover, since $V=\kappa\star\kappa$ is non-negative, also ${F_\gamma}(\cdot)\geq 0$.
\end{proof}

Next, for any $\alpha\in \Mcal_\leq(\R^d)$, let
$$
\mathscr G_t(\alpha)=\int_{\R^d}\int_{\R^d}\alpha(\mathrm{d}z) \bE_z\bigg[\1{\{\omega_t\in\mathrm{d}x\}}\, \exp\bigg\{\gamma\mathscr H_t(\omega)-\frac{\gamma^2}{2}tV(0)\bigg\}\bigg]
$$
and note that for any $a\in \R^d$ and $t>0$, $\mathscr G_t(\alpha_i)\overset{\ssup d}=\mathscr G_t(\alpha_i\star\delta_a)$. Hence, we may define 
$\mathscr G_t, \overline{\mathscr G}_t: \X\to \R$ as 
\begin{align}
\begin{split}\label{scr F}
&\qquad\mathscr G_t(\xi)=\sum_i \mathscr G_t(\alpha_i), \qquad\quad \overline{\mathscr G}_t(\xi)=\mathscr G_t(\xi)+\E\big[\mathscr Z_t-\mathscr G_t(\xi)\big] 
\qquad\forall \xi=(\widetilde\alpha_i)_{i\in I}\in \X \\
\end{split}
\end{align}

Next, for any $t>0$, and for $\xi=(\widetilde\alpha_i)_i \in \X$, we set 
\begin{equation}\label{alpha after time t}
\begin{aligned}
&\alpha_i^{\ssup t}(\mathrm{d}x):=\frac{1}{\overline{\mathscr G}_t(\xi)}\int_{\R^d}\alpha_i (\mathrm{d}z)\bE_z\bigg [\1{\{\omega_t\in\mathrm{d}x\}}\, \exp\big\{\gamma\mathscr H_t(\omega)-\frac{\gamma^2}{2}tV(0)\big\}\bigg] 
\\
&\xi^{\ssup t}:= \big(\widetilde\alpha_i^{\ssup t}\big)_{i\in I} \in \X.
\end{aligned}
\end{equation}
Recall that $\mathscr G_t(\alpha_i)\overset{\ssup d}=\mathscr G_t(\alpha_i\star\delta_a)$. Likewise, we also have 
$(\alpha_i\star\delta_a)^{\ssup t}(\mathrm{d}x)\overset{(d)}{=}(\alpha_i^{\ssup t}\star\delta_a)(\mathrm{d}x)$.
Recall that $\Mcal_1(\X)$ denotes the space of probability measures on $\X$. 
For any $\vartheta\in \Mcal_1(\X)$, then \eqref{alpha after time t} further defines a transition  kernel
\begin{align}
\label{pi-function}
\Pi_t(\vartheta,\d\xi^\prime)= \int_{\X} \pi_t(\xi,\d\xi^\prime) \vartheta(\d\xi) \qquad \mbox{where}\quad \pi_t(\xi,\mathrm{d}\xi^{\prime})=\P\big[\xi^{\ssup t}\in\mathrm{d}\xi^\prime |\xi\big]\in \Mcal_1(\X).
\end{align}
With the above definition, let us record two useful facts. 
\begin{lemma}\label{lemma-m}
The set
\begin{equation}\label{mathcal-K}
\mathfrak {m}_\gamma=\big\{\vartheta\in \mathcal{M}_1(\X):\Pi_t\,\vartheta=\vartheta\text{ for all } t>0\big\}
\end{equation}
of fixed points of $\Pi_t$ is a non-empty, compact subset of $\Mcal_1(\X)$.
\end{lemma}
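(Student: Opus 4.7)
The plan is to view $(\Pi_t)_{t>0}$ as a Markov semigroup on the compact metric space $\mathcal{M}_1(\X)$ and execute a Krylov--Bogolyubov argument. Theorem \ref{thm-compact} gives that $(\X,\mathbf D)$ is compact metrizable, so by Prokhorov's theorem $\mathcal{M}_1(\X)$ equipped with the weak topology is itself compact and metrizable. The independent-increments structure of the space-time white noise $\dot B$ in time, combined with the Markov property of Brownian motion, yields the semigroup identity $\Pi_s\circ\Pi_t=\Pi_{s+t}$ on $\mathcal M_1(\X)$; this follows from the factorization $\exp\{\gamma\mathscr H_{s+t}-\tfrac{\gamma^2}{2}(s+t)V(0)\}=\exp\{\gamma\mathscr H_t-\tfrac{\gamma^2}{2}tV(0)\}\cdot\exp\{\gamma(\mathscr H_{s+t}-\mathscr H_t)-\tfrac{\gamma^2}{2}sV(0)\}$ together with the conditional measure identity applied at time $t$ and the orbit-invariance of $\mathscr G_t$ used in \eqref{alpha after time t}.

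For non-emptiness, I would fix any reference measure $\vartheta_0\in\mathcal M_1(\X)$ (e.g.\ $\vartheta_0=\delta_{\widetilde 0}$), form the Ces\`aro averages
\[
\bar\vartheta_T=\frac1T\int_0^T \Pi_t\vartheta_0\,\d t\;\in\mathcal M_1(\X),
\]
and extract a weakly convergent subsequence $\bar\vartheta_{T_k}\Rightarrow\vartheta_\infty$ by compactness. The standard telescoping identity $\Pi_s\bar\vartheta_T-\bar\vartheta_T=\frac1T\int_T^{T+s}\Pi_t\vartheta_0\,\d t-\frac1T\int_0^s\Pi_t\vartheta_0\,\d t$ has total-variation mass $O(s/T)$, so passing to the limit along $T_k\to\infty$ gives $\Pi_s\vartheta_\infty=\vartheta_\infty$ for every $s>0$, provided $\vartheta\mapsto\Pi_s\vartheta$ is continuous in the weak topology on $\mathcal M_1(\X)$.

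Compactness of $\mathfrak{m}_\gamma$ then amounts to closedness in the compact space $\mathcal M_1(\X)$: if $\vartheta_n\in\mathfrak{m}_\gamma$ and $\vartheta_n\Rightarrow\vartheta$, the same continuity statement yields $\vartheta=\lim_n\Pi_s\vartheta_n=\Pi_s\vartheta$ for every $s>0$. Thus both halves of the lemma reduce to establishing the Feller-type property $\xi\mapsto\pi_s(\xi,\cdot)$ being weakly continuous from $(\X,\mathbf D)$ into $\mathcal M_1(\X)$.

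The main obstacle is precisely this Feller property, and it is where I would concentrate the technical effort. Since the metric $\mathbf D$ on $\X$ is built from the translation-invariant test functionals $\mathscr J(h,\cdot)$ for $h\in\mathcal H$, convergence in distribution of $\xi_n^{\ssup t}$ to $\xi^{\ssup t}$ reduces to continuity of $\xi\mapsto\E[\mathscr J(h,\xi^{\ssup t})]$ on $\X$ for each such $h$. Given $\xi_n\to\xi$ in $\X$, I would invoke the decomposition of Theorem \ref{thm-compact}, writing $\xi_n=\sum_{i=1}^k\alpha_{n,i}+\beta_n$ with $\alpha_{n,i}\star\delta_{a_{n,i}}\Rightarrow\alpha_i$, $|a_{n,i}-a_{n,j}|\to\infty$, and $\beta_n$ totally disintegrating. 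Two ingredients are then needed: (i) the compact support of $\kappa$ together with the diverging separations $|a_{n,i}-a_{n,j}|$ and boundedness of Brownian displacement over the fixed time $t$ imply that the Gaussian contributions driving the evolution of well-separated components become asymptotically independent, so each component $\alpha_{n,i}^{\ssup t}$ converges jointly to $\alpha_i^{\ssup t}$; and (ii) the totally disintegrating remainder $\beta_n$ evolves into a totally disintegrating configuration, which follows because the heat-kernel smoothing in \eqref{alpha after time t} preserves the property $\sup_{x\in\R^d}\beta_n(B_r(x))\to 0$ after controlling the random weights $\exp\{\gamma\mathscr H_t\}$ via the $L^2$ bound $\E[(\bE_z[\e^{\gamma\mathscr H_t-\tfrac{\gamma^2}{2}tV(0)}])^2]\leq\e^{\gamma^2 t V(0)}$. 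Once (i)--(ii) are in hand, testing against $\mathscr J(h,\cdot)$ yields the desired continuity, and the Krylov--Bogolyubov scaffolding above then finishes both non-emptiness and compactness.
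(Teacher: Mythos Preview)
Your proposal is correct, and the compactness half matches the paper exactly: both argue that the Feller property $\xi\mapsto\pi_t(\xi,\cdot)$ (continuity in $\mathbf D$, deduced from the convergence criterion of Theorem~\ref{thm-compact}) makes $\mathfrak m_\gamma$ closed in the compact space $\mathcal M_1(\X)$.

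The difference lies in non-emptiness. You run a full Krylov--Bogolyubov argument with Ces\`aro averages $\bar\vartheta_T$, which is valid but requires both the semigroup identity $\Pi_s\circ\Pi_t=\Pi_{s+t}$ and the Feller continuity before a single invariant measure can be produced. The paper observes that this is unnecessary: the element $\widetilde 0\in\X$ satisfies $(\widetilde 0)^{\ssup t}=\widetilde 0$ deterministically for every $t>0$ (since $\mathscr G_t(\widetilde 0)=0$, $\overline{\mathscr G}_t(\widetilde 0)=\E[\mathscr Z_t]=1$, and all components remain the zero measure), so $\pi_t(\widetilde 0,\cdot)=\delta_{\widetilde 0}$ and hence $\delta_{\widetilde 0}\in\mathfrak m_\gamma$ directly. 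You even chose $\vartheta_0=\delta_{\widetilde 0}$ as the seed for your averaging scheme without noticing that it is already the fixed point you seek. Your route does buy generality---it would work for any Feller semigroup on a compact space without an explicit fixed point---but here it is a detour, and it forces you to verify the semigroup property of $\Pi_t$ (which involves the nontrivial normalization by $\overline{\mathscr G}_t$) that the paper never needs for this lemma.
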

\begin{proof}
Note that ${\mathfrak m_\gamma}\neq \emptyset$, because $\delta_{\widetilde 0}\in{\mathfrak m_\gamma}$. Moreover, by the definition of the metric $\mathbf D$ on $\X$ and by the resulting convergence criterion determined by Theorem \ref{thm-compact}, the map $\X\ni\xi\mapsto\pi_t(\xi,\cdot)$ is continuous. This property, together with the compactness of $\X$ (and therefore also that of $\Mcal_1(\X)$), we have  that $\Mcal_1(\X)\ni \vartheta\mapsto \Pi_t(\vartheta,\cdot)$ is continuous too for any $t>0$. It follows that ${\mathfrak m_\gamma}$ is a closed subset of the compact metric space $\mathcal M_1(\X)$, implying the compactness of ${\mathfrak m_\gamma}$. 
\end{proof}

For the next lemma we let $\mathscr L(\X)$ denote the space of all Lipschitz functions $f : \X\to \R$ with Lipschitz constant at most $1$ and $f(\widetilde 0)=0$.

 \begin{lemma}\label{LLN}
 For $\vartheta,\vartheta^\prime\in\mathcal M_1(\X)$ let $\mathscr{W}(\vartheta,\vartheta^\prime)=\sup_{f\in\mathscr L(\X)} \big| \int_{\X} f(\xi) \vartheta(\d\xi) - \int_{\X} f(\xi) \vartheta^\prime(\d\xi)\big|$ be the Wasserstein metric on $\Mcal_1(\X)$. If $\bQ_t:=\hP_t[\omega_t\in \cdot]\in \Mcal_1(\R^d)$ (so that $\widetilde\bQ_t\in \X$) and
 $\nu_T:=\frac 1 T\int_0^T \delta_{\widetilde\bQ_t}\d t\in \Mcal_1(\X)$,  then for any $s\geq 0$, $\P$-almost surely, $\mathscr {W}(\nu_T,\Pi_s\nu_T)\to 0$ as $T\to\infty$.
 \end{lemma}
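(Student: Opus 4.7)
My plan is to reduce the Wasserstein convergence to a scalar Ces\`aro-type ergodic estimate separately for each Lipschitz test function, and then to uniformize over $\mathscr L(\X)$ by compactness of $\X$. Fix $f\in\mathscr L(\X)$, set $g_t:=f(\widetilde\bQ_t)$, and let $\mathscr F_t$ denote the $\sigma$-algebra generated by the white noise $\dot B|_{[0,t]\times\R^d}$, so that $\widetilde\bQ_t$ is $\mathscr F_t$-measurable. Unpacking \eqref{pi-function} and \eqref{alpha after time t},
\begin{equation*}
\int_{\X} f\,d(\Pi_s\nu_T) \;=\; \frac 1T\int_0^T \E\big[f(\widetilde\bQ_t^{\ssup s})\,\big|\,\widetilde\bQ_t\big]\,dt,
\end{equation*}
where $\widetilde\bQ_t^{\ssup s}$ is constructed from $\widetilde\bQ_t$ via \eqref{alpha after time t} using an independent copy of the white noise on $[0,s]$. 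The pivotal identity is
\begin{equation*}
\E\big[f(\widetilde\bQ_{t+s})\,\big|\,\mathscr F_t\big] \;=\; \int_{\X} f(\xi^\prime)\,\pi_s(\widetilde\bQ_t,d\xi^\prime),
\end{equation*}
which follows from three observations: since $\bQ_t$ is a probability measure (a single orbit of unit mass), $\E[\mathscr Z_s]=\E[\mathscr G_s(\widetilde\bQ_t)\mid\widetilde\bQ_t]=1$ and hence the normalizer in \eqref{alpha after time t} reduces to $\overline{\mathscr G}_s(\widetilde\bQ_t)=\mathscr G_s(\widetilde\bQ_t)$; the Markov property of Brownian motion at time $t$ combined with independence of the white-noise increments exhibits $\bQ_{t+s}$ as the pushforward \eqref{alpha after time t} applied to $\bQ_t$ with the increment noise $\dot B|_{[t,t+s]}$, which is independent of $\mathscr F_t$ and distributionally a fresh copy of any noise on $[0,s]$; translation invariance of the resulting map then passes the identification to the level of orbits in $\X$.

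\emph{Telescoping and $L^2$ decorrelation.} Setting $\Delta_t:=g_{t+s}-\E[g_{t+s}\mid\mathscr F_t]$ and shifting the second integral by $s$,
\begin{equation*}
\int f\,d(\nu_T-\Pi_s\nu_T) \;=\; \frac 1T\bigg[\int_0^s g_t\,dt - \int_T^{T+s} g_t\,dt\bigg] + \frac 1T\int_0^T \Delta_t\,dt.
\end{equation*}
The diameter of $(\X,\mathbf D)$ is at most $2$ directly from the definition of $\mathbf D$, so $|g_t|\leq 2$ and the boundary term is $O(s/T)$. For the residuals, $\Delta_u$ is $\mathscr F_{u+s}$-measurable and $\E[\Delta_t\mid\mathscr F_t]=0$, so $\E[\Delta_t\Delta_u]=0$ whenever $|t-u|>s$; combined with $|\Delta_t|\leq 4$ this yields $\E\big[\big(\tfrac 1T\int_0^T\Delta_t\,dt\big)^{2}\big]\leq C\,s/T$. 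A Borel--Cantelli argument along $T_n=n^2$, together with the uniform bound $|\Delta_t|\leq 4$ (used to interpolate for $T\in[T_n,T_{n+1}]$), upgrades this $L^2$ estimate to $\P$-almost sure convergence of $\frac 1T\int_0^T\Delta_t\,dt$ to $0$.

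\emph{Uniformization and main obstacle.} Since $(\X,\mathbf D)$ is compact by Theorem \ref{thm-compact}, Arzel\`a--Ascoli implies that $\mathscr L(\X)$ is totally bounded in the supremum norm. For $\eps>0$, covering $\mathscr L(\X)$ by finitely many sup-norm $\eps$-balls centered at $f_1,\dots,f_N$ yields
\begin{equation*}
\mathscr W(\nu_T,\Pi_s\nu_T)\;\leq\; \max_{1\leq i\leq N}\bigg|\int f_i\,d(\nu_T-\Pi_s\nu_T)\bigg|+2\eps,
\end{equation*}
and the finite maximum converges to $0$ $\P$-a.s.\ by the previous step. Sending $\eps\downarrow 0$ along a countable sequence finishes the proof. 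The main obstacle is the Markov identification in the first paragraph: it relies on the delicate cancellation making $\overline{\mathscr G}_s(\widetilde\bQ_t)=\mathscr G_s(\widetilde\bQ_t)$ precisely because $\widetilde\bQ_t$ is a single orbit of a probability measure, together with a careful application of the Brownian Markov property, independence of white-noise increments, and translation invariance to lift a path-space identity relating $\bQ_{t+s}$ and $\bQ_t$ to an identity at the level of orbits in $\X$. Once this is established, the ergodic time-averaging and the Wasserstein uniformization are routine.
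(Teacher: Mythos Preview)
Your proof is correct and follows essentially the same route as the paper: identify $\int f\,d\Pi_s\nu_T$ with $\frac1T\int_0^T\E[f(\widetilde\bQ_{t+s})\mid\mathscr F_t]\,dt$ via the Markov property, control the martingale-difference averages $\frac1T\int_0^T\Delta_t\,dt$ by a moment bound plus Borel--Cantelli along a subsequence, and uniformize over $\mathscr L(\X)$ by Arzel\`a--Ascoli. The only cosmetic difference is that the paper discretizes time into blocks of length $s$ and applies a fourth-moment Burkholder--Davis--Gundy bound, whereas you use the cleaner second-moment orthogonality $\E[\Delta_t\Delta_u]=0$ for $|t-u|>s$; your version is arguably more direct.
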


\begin{proof}
Let $f : \X\to \R$ be any Lipschitz function vanishing at $\widetilde 0\in \X$ and having Lipschitz constant at most $1$. Then 
for any $t \in [0,s)$, 
	$M_{n,t}= \sum_{k=0}^n \big(f(\widetilde\bQ_{t+ks+s})-\E\big[ f(\widetilde\bQ_{t+ks+s}) |\mathcal{F}_{t+ks}\big]\big)$ 
	is an $(\mathcal F_{t+ (n+1)s})_{n\in\N_0}$ martingale (here $\mathcal F_t$ is the $\sigma$-algebra generated by the noise ${\dot B}$ up to time $t$). Then by the Burkholder-Davis-Gundy inequality, for some constant $C>0$, $\E[ M_{n,t}^4] \leq C (n+1)^2$, and thus, by Jensen's inequality 
	$$
	\E\bigg[\bigg(\int_0^{sn} f(\widetilde\bQ_{t+s})-\E\big[ f(\widetilde\bQ_{t+s}) |\mathcal{F}_t\big] \,\,\mathrm{d}t\bigg)^4\bigg] = \E\bigg[\bigg(\int_0^s M_{n-1,t} \, \d t\bigg)^4\bigg]\leq C n^2.
	$$
	for some $C=C(s,f)>0$. Hence $\P\big\{\big|\int_0^{sn} f(\widetilde\bQ_{t+s})-\E\big[ f(\widetilde\bQ_{t+s}) |\mathcal{F}_t\big]\mathrm{d}t\big|\geq (sn)^{4/5}\big\}$ is summable and it follows from Borel-Cantelli Lemma that
	\begin{align}\label{lipschitz concentration}
	\limsup_{T\to\infty}T^{-4/5}\bigg|\int_0^{sn} f(\widetilde\bQ_{t+s})-\E\big[ f(\widetilde\bQ_{t+s}) |\mathcal{F}_t\big] \,\,\mathrm{d}t\bigg|\leq C(s,f)
	\end{align}
	almost surely. Since $f$ has Lipschitz constant at most $1$, we also have (choosing $n=\lfloor T/s\rfloor$) $\big|\int_{sn}^Tf(\widetilde\bQ_{t+s})-\E[f(\widetilde\bQ_{t+s})|\mathcal F_t]\d t\big|\leq 2s$. 
	Now for any $s\geq 0$, let
		$\nu_T^{\ssup s}=\frac{1}{T}\int_0^T\delta_{\widetilde\bQ_{t+s}}\mathrm{d}t$, so that $\nu_T=\nu_T^{\ssup 0}$. Then (with $\Pi_s$ as in \eqref{pi-function}), 
		the above display implies that, for $T>0$ sufficiently large, we have that almost surely 
		$$
		\bigg|\int_{\X} f(\xi) \nu_T^\ssup{s}(\d\xi) - \int_{\X} f(\xi) \Pi_s\nu_T(\d\xi)\bigg| \leq C(s,f)T^{-1/5}. 
		$$
On the other hand,
		$$
		\bigg|\int_{\X} f(\xi) \nu_T(\d\xi) - \int_{\X} f(\xi) \nu_T^\ssup{s}(\d\xi)\bigg|=\bigg|\frac{1}{T}\int_0^T f(\widetilde\bQ_t)\mathrm{d}t-\frac{1}{T}\int_s^{T+s} f (\widetilde\bQ_t)\mathrm{d}t\bigg|\leq \frac{2s}{T}.
		$$
		Combining the last two displays, and using triangle inequality, we obtain that almost surely
	\begin{equation}\label{prop for LLN}
	\limsup_{T\to\infty}T^{1/5}\bigg| \int_{\X} f(\xi) \nu_T(\d\xi) - \int_{\X} f(\xi) \Pi_s\nu_T(\d\xi)\bigg|\leq C(s,f).
		\end{equation}	
		By the definition of the metric $\mathbf D$ on $\X$, for any $f\in \mathscr L(\X)$, 
				$\sup_{\xi\in \X}| f (\xi)|\leq\sup_{\xi\in\X}\mathbf{D}(\xi,\widetilde 0)\leq 2$ and 
		thus, $\mathscr L(\X)$ forms an equicontinuous family which is closed in the uniform norm. By Ascoli's theorem, this space is then compact and is also separable.
		If $\Psi_T(f):=\frac{1}{T}\int_0^T f(\widetilde\bQ_{t})-\E\big[ f(\widetilde\bQ_{t+s}) |\mathcal{F}_t\big] \,\,\mathrm{d}t$, then given any $f,g\in\mathscr L(\X))$ with $\| f- g\|_{\infty}<\delta$, we have $|\Psi_T(f)- \Psi_T( g)|<  2\delta$. 
		Thus $(\Psi_T(\cdot))_{T\geq 0}$ is equicontinuous on the compact metric space $\mathscr L(\X)$, and by \eqref{prop for LLN}
		this family converges pointwise to $0$ on a dense subset $(f_n)_n$. Thus, by
		applying Arzela-Ascoli theorem once more, we obtain that this convergence is uniform, which, together with \eqref{prop for LLN} implies the corollary.
		\end{proof}

\subsection{Malliavin calculus and free energy derivatives.}

We recall some rudimentary facts from Malliavin calculus (see e.g. \cite{N06}) and its consequences for our Hamiltonian $\mathscr H_T$. 
Let   $(\mathcal E, \mathcal F, \P)$ be a       complete probability space  carrying a centered Gaussian process $\{{\dot B}(h)\}_{h\in L^2([0,T]\otimes \R^d)}$ with covariance structure 
$\E[{\dot B}(h) {\dot B}(g)]=\langle h,g\rangle_{L^2([0,T]\otimes \R^d)}$.

For any square integrable random variable $F$ on $(\mathcal E, \mathcal F, \P)$, 
the Malliavin
derivative $DF$ is (when it exists) a random element of $L^2([0,T]\otimes \R^d)$, that can be viewed as a space-time indexed stochastic
process $DF=(D_{t,x}F)_{t,x}$. In a particular set up, if  
\begin{equation}\label{form}
F=f({\dot B}(h_1),...,{\dot B}(h_n))
\end{equation}
for a $C^\infty$-function $f:\R^n\rightarrow \R$, then, the Malliavin derivative is defined as 
\begin{equation}\label{Malliavin}
DF=\sum_{i=1}^n\partial_if({\dot B}(h_1),...,{\dot B}(h_n))h_i. 
\end{equation} 
The iterated derivative $D^{\ssup k}F$ is a random element in the tensor product $L^2([0,T]\otimes \R^d)\otimes\dots\otimes L^2([0,T]\otimes \R^d)$. 

For any $p\geq 1$ and any positive integer $k\geq 1$, note that 
$$
\Vert F\Vert_{k,p}=\bigg[\E[|F|^p]+\sum_{i=1}^k\E[\Vert D^\ssup{i}F\Vert_{(L^2)^{\otimes j}}^p]\bigg]^{1/p}
$$ 
defines a semi-norm, and as the domain of the Malliavin derivative $D$ in $L^p(\P)$ is denoted by $\mathbb D^{\ssup{1,p}}$ in the sense that 
$\mathbb D^{\ssup{1,p}}$ is the closure of the class of random variables of the form \eqref{form} with respect to the norm $\Vert \cdot\Vert_{1,p}$. Similarly,  $\mathbb D^{\ssup{k,p}}$ will stand for the
completion of the family of smooth random variables with respect to the norm $\Vert\cdot\Vert_{k,p}$.

\smallskip

With our space-time white noise ${\dot B}$ in our particular set up, note that for a fixed Brownian path $\omega$, the object 
$\mathscr H_T(\omega, {\dot B})=\int_0^T\int_{\R^d}\kappa(y-\omega_s){\dot B}(s,y)\d y\d s$
can be reinterpreted as 
$$
{\dot B}(h)=\int_0^T\int_{\R^d}h(s,y){\dot B}(s,y)\d y\d s, \quad\text{with } \qquad h(s,y)=\kappa(y-\omega_s) \in L^2([0,T]\otimes \R^d).
$$
In particular, if $n=1$ and $f(x)=x$, then the definition \eqref{Malliavin} dictates that $D{\dot B}(h)=h$ and we have the following implications pertinent to the Malliavin derivative of $\mathscr H_T(\omega)$ and the free energy 
\begin{align}
\label{definition of the functional f}
f_T({\dot B})= f_T(\gamma, {\dot B})=\frac 1 T\log Z_T= \frac{1}{T}\log\bE_0\big[\e^{\gamma\int_0^T\int_{\R^d}\kappa(y-\omega_s){\dot B}(s,y)\d y\d s}\big].
\end{align}
Note that, 
\begin{equation}\label{fL2}
T^2\E [f_T({\dot B})^2]\leq \bE_0\E[\e^{\gamma \mathscr H_{T}(\omega,{\dot B})}]+\bE_0\E[\e^{-\gamma\mathscr H_{T}(\omega,{\dot B})}]=2\e^{\frac{\gamma^2}{2}TV(0)}<\infty,
\end{equation}
where we used that for any $x>0$, $(\log(x))^2\leq x+x^{-1}$ and Jensen's inequality. Hence,   for any $T,\gamma\geq 0$, $f_T({\dot B}) \in L^2(\mathbf P)$.
Moreover, note that  (with $\widehat{\bE}_T$ denoting expectation w.r.t. $\hP_T$, cf. \eqref{notation}), 
\begin{equation}\label{fprime}
f^\prime_T(\gamma,{\dot B}):= \frac{\partial}{\partial\gamma} f_T(\gamma,{\dot B})= \frac 1 T \int_0^T\int_{\R^d} \bET[\kappa(y-\omega_s)] {\dot B}(s,y) \d y \d s
\end{equation}
so that, by It\^o isometry and Jensen's inequality, 
\begin{equation}\label{fprimeL2}
T^2\E\big[(f^\prime_T(\gamma,{\dot B}))^2\big] \leq \int_0^T\int_{\R^d} \E\big[\bET(\kappa^2(y-\omega_s))\big] \d y\d s=T(\kappa\star\kappa)(0)<\infty. 
\end{equation}

Recall \eqref{Mprod}; we will also need the following expressions for the Malliavin derivatives of $\mathscr H_T$ and that of the free energy $f_T$: 

\begin{lemma}
	For any $t>0$ and $x\in \R^d$, 
	\begin{itemize}
		\item $D_{t,x}[\mathscr H_T(\omega)]=\kappa(x-\omega_t)$ and 
		\begin{align}
		\label{first Malliavin of free energy}
		D_{t,x} [f_T({\dot B})]
		=\frac{\gamma}{T}\bET[\kappa(x-\omega_t)].
		\end{align}
		\item Moreover, the second Malliavin derivative of the free energy is given by 
		\begin{align}
		\label{second Malliavin of free energy}
		\begin{split}
		D_{t,x}^{\ssup 2}[f_T({\dot B})]
		=\frac{\gamma^2}{T}\bigg(\bET[\kappa(x-\omega_t)^2]-\bET^\otimes[\kappa(x-\omega_t)\kappa(x-\omega^\prime_t)]\bigg)
		\end{split}
		\end{align}
		where $\bET^\otimes$ denotes expectation w.r.t. the product GMC measure $\hP^\otimes_T$ defined w.r.t. two independent Brownian paths. 
		\item For any smooth random variable $F$ of the form \eqref{form}, we have
		\begin{equation}\label{byparts}
		\E[F{\dot B}(h)]=\E\bigg[\int_0^T\int_{\R^d}hD_{t,x}F\d x\d t\bigg].
		\end{equation}
	\end{itemize}
\end{lemma}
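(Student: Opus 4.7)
\smallskip

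\noindent\textbf{Proof plan.} My plan is to treat the three items in order, exploiting that for each fixed Brownian path $\omega$, the quantity $\mathscr H_T(\omega)$ is a centered linear functional of the white noise and therefore admits a trivial Malliavin derivative, after which the two free-energy formulas follow from the chain rule and quotient rule, interchanging differentiation with the $\bE_0$-expectation.

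For the first identity, I would observe that $\mathscr H_T(\omega) = {\dot B}(h^\omega)$ where $h^\omega(s,y)=\kappa(y-\omega_s)\,\mathbf 1_{[0,T]}(s)\in L^2([0,T]\otimes\R^d)$. Writing $f(x)=x$ in \eqref{form}, formula \eqref{Malliavin} gives $D_{t,x}\mathscr H_T(\omega)=h^\omega(t,x)=\kappa(x-\omega_t)$ as claimed. To obtain the Malliavin derivative of $f_T({\dot B})$ I would first check that the exponential $e^{\gamma\mathscr H_T(\omega,{\dot B})}\in\mathbb D^{1,p}(\P)$ for every $p\geq 1$, with Malliavin derivative $\gamma\,\kappa(x-\omega_t)e^{\gamma\mathscr H_T(\omega)}$; combined with the exponential moment bound already used in \eqref{fL2} this legitimizes a Fubini-type interchange so that $D_{t,x}Z_T=\gamma\bE_0[\kappa(x-\omega_t)e^{\gamma\mathscr H_T}]$. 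The chain rule $D\log Z_T=Z_T^{-1}DZ_T$ then gives \eqref{first Malliavin of free energy}, noting that the deterministic normalization $e^{-\gamma^2TV(0)/2}$ cancels between numerator and denominator when passing from $Z_T$ to $\bET$.

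For the second derivative I would apply $D_{t,x}$ to \eqref{first Malliavin of free energy} written as $\frac{\gamma}{T}\,Z_T^{-1}\bE_0[\kappa(x-\omega_t)e^{\gamma\mathscr H_T}]$, using the quotient rule together with the fact that $\kappa(x-\omega_t)$ is $\P$-deterministic (it depends only on the Brownian variable). Differentiating the numerator yields the term $\frac{\gamma^2}{T}\bET[\kappa(x-\omega_t)^2]$, while differentiating the denominator contributes $-\frac{\gamma^2}{T}\bET[\kappa(x-\omega_t)]^2$; rewriting the square as a product over two independent Brownian copies under $\hP_T^\otimes$ gives exactly \eqref{second Malliavin of free energy}.

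For the integration-by-parts identity \eqref{byparts}, I would invoke the Gaussian integration-by-parts (Stein) identity for the jointly Gaussian vector $({\dot B}(h),{\dot B}(h_1),\dots,{\dot B}(h_n))$: for any smooth $f$ of polynomial growth,
\begin{equation*}
\E\bigl[{\dot B}(h)\,f({\dot B}(h_1),\dots,{\dot B}(h_n))\bigr]=\sum_{i=1}^n\langle h,h_i\rangle_{L^2}\,\E\bigl[\partial_i f({\dot B}(h_1),\dots,{\dot B}(h_n))\bigr].
\end{equation*}
Combining this with the definition \eqref{Malliavin} of $DF=\sum_i\partial_if(\cdot)h_i$ and recognizing $\sum_i\langle h,h_i\rangle\partial_i f=\langle h,DF\rangle_{L^2([0,T]\otimes\R^d)}$ yields \eqref{byparts}.

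The only real technical point, and the step I would be most careful about, is the Fubini-type interchange between the Malliavin derivative $D_{t,x}$ and the Brownian expectation $\bE_0$ used in the derivation of \eqref{first Malliavin of free energy}. This amounts to showing that the family $\{e^{\gamma\mathscr H_T(\omega,\cdot)}\}_{\omega}$ lies in $\mathbb D^{1,2}(\P)$ uniformly enough in $\omega$ so that differentiation commutes with $\bE_0$; once granted, the remaining computations are algebraic applications of the chain and quotient rules.
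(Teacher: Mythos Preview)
Your proposal is correct and follows essentially the same route as the paper: the paper's proof is very terse, simply invoking the definition of the Malliavin derivative for $D_{t,x}\mathscr H_T$, the chain rule for \eqref{first Malliavin of free energy}, a direct computation of the quotient for \eqref{second Malliavin of free energy}, and the abstract integration-by-parts identity $\E[F{\dot B}(h)]=\E[\langle DF,h\rangle_H]$ for \eqref{byparts}. Your write-up supplies additional detail (the explicit identification $h^\omega(s,y)=\kappa(y-\omega_s)$, the Stein-identity derivation of the duality formula, and the flagging of the $D_{t,x}$--$\bE_0$ interchange), but none of this departs from the paper's approach.
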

\begin{proof}
	Note that the first assertion is a consequence of the definition of Malliavin derivative, while \eqref{first Malliavin of free energy} follows from the chain rule and the fact 
	$$
	D_{t,x} [f_T({\dot B})]=\frac{\gamma}{T}\frac{\bE_0[\kappa(x-\omega_t)\e^{\gamma\mathscr H_{T}(\omega)}]}{Z_T}.
	$$
	Also, the assertion \eqref{second Malliavin of free energy}
	follows from 
	$$
	\begin{aligned}
	&D_{t,x}^{\ssup 2}[f_T({\dot B})]\\
	&=\frac{\gamma^2}{T}\frac{\bE_0[\kappa(x-\omega_t)^2\e^{\gamma\mathscr H_{T}(\omega)}]}{\bE_0[\e^{\gamma\mathscr H_{T}(\omega)}]}-\frac{\gamma^2}{T}\bE_0\big[\kappa(x-\omega_t)\e^{\gamma\mathscr H_{T}(\omega)}\big]\bigg(\frac{\bE_0[\kappa(x-\omega_t)\e^{\gamma\mathscr H_{T}(\omega)}]}{\big(\bE_0[\e^{\gamma \mathscr H_{T}(\omega)}]\big)^2}\bigg).
	\end{aligned}
	$$
	Finally, \eqref{byparts} is an easy consequence of integration by parts for Malliavin calculus that asserts that on any Hilbert space $H$ and $h\in H$, 
	\begin{align}
	\label{integration by parts formula}
	\E[F{\dot B}(h)]=\E[\langle DF,h\rangle_H].
	\end{align}
\end{proof}

\subsection{Flows on path space and a Poincar\'e inequality}

Using the tools from Malliavin calculus from previous section, the goal of the current section is to prove a Poincar\'e inequality defined w.r.t. a {\it flow} for an infinite dimensional Ornstein-Uhlenbeck (O-U) process.

Let us first recall the definition of an Ornstein-Uhlenbeck operator defined w.r.t. a standard Gaussian measure $\mu$ in finite dimensions $\R^n$. Note that if $f:\R^n \to \R$ is differentiable, then its gradient $\nabla f\colon \R^n \to \R^n$ defines a vector field and the divergence $\delta: \R^n\to \R$ can be thought of as an ``adjoint" for $\nabla$ in the Hilbert space $L^2(\R^n)$, i.e. $\delta$ acts on a vector field $v:\R^n\to \R^n$ via the relation
$\E^\mu[\nabla f \cdot v]=\E^\mu[f \delta v]$. Via this relation we also have, for any $v=(v^{\ssup1},\dots, v^{\ssup n})$ with $v^{\ssup i}:\R^n\to \R$ and $f:\R^n\to \R$ continuously differentiable, by integration by parts 
$\E^\mu[\nabla f\cdot v]= \sum_{i=1}^n \int_{\R^n} \partial_i f(x) v^{\ssup i}(x) \mu(\d x)= \sum_{i=1}^n \int_{\R^n}  f(x) \big(x_iv^{\ssup i}(x)- \partial_i v^{\ssup i}(x)\big) \mu(\d x)$, and consequently, 
$\delta v=\sum_{i=1}^n (x_i v^{\ssup i}- \partial_i v^{\ssup i})$. In particular, the latter identity implies for $v= f\nabla g: \R^n \to \R^n$ and any sufficiently smooth $f, g:\R^n\to \R$, that 
\begin{equation}\label{identity}
\begin{aligned}
\delta(f\nabla g)&= \sum_{i=1}^n \bigg[x_i f(x) \frac{\partial g}{\partial x_i} - \bigg(\frac {\partial f}{\partial x_i}\frac{\partial g}{\partial x_i} + f(x) \frac{\partial^2 g}{\partial x_i^2} \bigg)\bigg] 
\\
&= - \nabla f\cdot\nabla g - f(\Delta g- x\cdot \nabla g).
\end{aligned}
\end{equation}
Now we can define the Ornstein-Uhlenbeck operator $\mathcal L$ as 
\begin{equation}\label{OU1}
\mathcal L= - \delta \circ \nabla,
\end{equation}
and given \eqref{identity} (for the particular choice $f=1$), the above definition reduces to 
\begin{equation}\label{OU2}
\mathcal L g= -\delta (\nabla g)= \Delta g- x \cdot\nabla g.
\end{equation}

The above finite dimensional setup can be translated to the abstract Gaussian space $(\mathcal E, \mathcal F, \P)$ too by replacing the gradient by the Malliavin derivative $D$ defined before, while the divergence 
$\delta$ acts as an adjoint of $D$. In other words, for any $F\in H= \mathbb D^{\ssup{1,2}}$ and $u$ in the domain of $\delta$, we have 
$$
\E[\langle DF,u\rangle_H]=\E[F\delta(u)]
$$
and the Ornstein-Uhlenbeck operator $\mathscr L$ for $\mathcal E$ then is defined by
\begin{equation}\label{OU3}
\mathscr L=-\delta\circ D.
\end{equation} 
Recall the integration by parts formula for Malliavin calculus \eqref{integration by parts formula}. We apply that formula to the product of the two random variables $F,G$ of the form \eqref{form}. Then,
$$
\E[G\langle DF,h\rangle_H]=-\E[F\langle DG,h\rangle_H]+\E[FG{\dot B}(h)].
$$
If $u$, which is in the domain of $\delta$ has the form $u=\sum_{j=1}^nF_jh_j$, then by the last display,
$$
\E[\langle DF,u\rangle_H]=\sum_{j=1}^n\E[F_j\langle DF,h_j\rangle_H]=\sum_{j=1}^n-\E[F\langle DF_j,h\rangle_H]+\E[FF_j{\dot B}(h_j)]
$$
and we conclude that $\delta(u)=\sum_{j=1}^nF_j{\dot B}(h_j)-\langle DF_j,h_j\rangle_H$. For the Ornstein-Uhlenbeck operator $\mathscr L$ this means $\mathscr L u=-\delta(Du)=\sum_{j=1}^n \langle D^{\ssup{2}}F_j,h_j\rangle_H-DF_j{\dot B}(h_j)$. Applying this theory in our setting to the functional $f_T({\dot B})$, the Ornstein-Uhlenbeck operator has the form
\begin{equation}\label{OU4}
 (\mathscr L f_T)({\dot B})=\int_0^T\int_{\R^d}D_{t,x}^{\ssup 2}f_T({\dot B})\d x\d t- {\dot B}\big( Df_T({\dot B})\big).
\end{equation}

Let $\eta$ be space-time white noise which is an {\it independent} copy of ${\dot B}$. That is, $\{\eta(f)\}_{f\in L^2([0,T]\otimes \R^d)}$ is
a centered Gaussian process with covariance $\E[\eta(f_1) \eta(f_2)]=\langle f_1, f_2 \rangle_{L^2([0,T]\otimes \R^d)}$.

We define the {\it Ornstein Uhlenbeck flow} of ${\dot B}$ at time $t\geq 0$ by 
\begin{equation}\label{flow}
\boldsymbol{\dot B}_t(s,x)=\e^{-t}{\dot B}(s,x)+\e^{-t}\eta\big((\e^{2t}-1)^{-1}s, x\big)\text{ if }t>0, \qquad \boldsymbol{\dot B}_0={\dot B}.
\end{equation}
Recall that 
$\lambda \theta^{d/2}{\dot B}(\lambda^2 s, \theta x)$ has the same law 
as that of ${\dot B}(s,x)$ for any $\lambda, \theta>0$. Since $\eta$ is an independent copy of ${\dot B}$, it follows that
for any fixed $t>0$, $\{\boldsymbol{\dot B}_t(f)\}_f$ is also a centered Gaussian process with the same covariance structure $\E[\boldsymbol{{\dot B}}_t(f_1) \boldsymbol{{\dot B}}_t(f_2)]=\langle f_1, f_2 \rangle_{L^2([0,T]\otimes \R^d)}$. 

Therefore, we can define 
\begin{equation}
\begin{aligned}
&\mathscr H_{T}(\omega,\boldsymbol{{\dot B}}_t)= \int_0^T\int_{\R^d} \kappa(y-\omega_s) \boldsymbol{{\dot B}}_t(s,y) \d y \d s, \\
& {\hP}_T(\boldsymbol{{\dot B}}_t)=\frac 1 {Z_T(\boldsymbol{{\dot B}}_t)} \exp\big\{\gamma \mathscr H_{T}(\omega,\boldsymbol{{\dot B}}_t)\big\} \bP_0(\d \omega), \quad
Z_T(\boldsymbol{{\dot B}}_t)=\bE_0\big[\exp\big\{\gamma \mathscr H_{T}(\omega,\boldsymbol{{\dot B}}_t)\big\} \big]. 
\end{aligned}
\end{equation}
For expectation $\bE^{{\hP}_T(\boldsymbol{{\dot B}}_t)}$ with respect to the probability measure ${\hP}_T(\boldsymbol{{\dot B}}_t)$ we write $\bETt$. And if $\omega,\omega'$ are two independent Brownian motions, we write
\begin{align*}
\hP_T^\otimes(\Bt)&=\frac{1}{Z_T(\boldsymbol{{\dot B}}_t)^2}\exp\big\{\gamma(\mathscr H_T(\omega,\boldsymbol{{\dot B}}_t)+\mathscr H_T(\omega^\prime,\boldsymbol{{\dot B}}_t))\big\}\bP_0^\otimes(\d \omega,\d \omega^\prime)
\end{align*}  
and for expectation with respect to the probability measure  $\hP_T^\otimes(\Bt)$ we write $\bET^{\ssup{\Bt}^\otimes}$.

We also need 
\begin{lemma}\label{claim2}
	For any $T,\gamma>0$, 
	\begin{align}
	\label{OU operator applied to f}
	\mathscr  L f_T(\Bt)=\gamma^2V(0)-\frac{\gamma^2}{T}\int_0^T\bET^{\ssup{\Bt}^\otimes}[V(\omega_s-\omega_s^\prime)]\d s-\gamma f_{T}^\prime(\gamma,\Bt) \in L^2(\P). 
	\end{align}
	for $f_T$ as in \ref{definition of the functional f} and $f_{T}^\prime(\gamma,\Bt) =\frac{\partial}{\partial\gamma}f_T(\gamma,\Bt)$.
\end{lemma}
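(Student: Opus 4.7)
\textbf{Proof plan for Lemma \ref{claim2}.} The plan is to substitute the explicit Malliavin derivative formulas \eqref{first Malliavin of free energy}--\eqref{second Malliavin of free energy} into the representation \eqref{OU4} of the Ornstein--Uhlenbeck operator, and then use the spherical symmetry of $\kappa$ to identify the resulting integrals. Since $\boldsymbol{\dot B}_t$ has the same law as ${\dot B}$ for each fixed $t$, it suffices to carry out the computation for ${\dot B}$; the case of $\boldsymbol{\dot B}_t$ follows by replacing ${\dot B}$ by $\boldsymbol{\dot B}_t$ throughout (this also means $\bE_T$ becomes $\bE_T^{\ssup{\boldsymbol{\dot B}_t}}$ and $\bE_T^\otimes$ becomes $\bE_T^{\ssup{\boldsymbol{\dot B}_t}^\otimes}$).

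The first step is to compute $\int_0^T\!\int_{\R^d} D^{\ssup 2}_{s,x} f_T({\dot B})\,\d x\,\d s$. Plug in \eqref{second Malliavin of free energy}, interchange $\d x\,\d s$ with the GMC expectations (justified by boundedness and positivity of $\kappa$), and use spherical symmetry of $\kappa$, which gives $\int_{\R^d}\kappa(x-a)\kappa(x-b)\d x=(\kappa\star\kappa)(a-b)=V(a-b)$ for any $a,b\in\R^d$. In particular $\int_{\R^d}\kappa(x-\omega_s)^2\d x=V(0)$ (independent of $\omega_s$) and $\int_{\R^d}\kappa(x-\omega_s)\kappa(x-\omega_s^\prime)\d x=V(\omega_s-\omega_s^\prime)$. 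This yields
\begin{equation*}
\int_0^T\!\!\int_{\R^d} D^{\ssup 2}_{s,x} f_T({\dot B})\,\d x\,\d s \;=\; \gamma^2 V(0) \;-\; \frac{\gamma^2}{T}\int_0^T \bE_T^\otimes[V(\omega_s-\omega_s^\prime)]\,\d s.
\end{equation*}

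The second step is to identify the divergence term. Using \eqref{first Malliavin of free energy},
\begin{equation*}
{\dot B}\bigl(Df_T({\dot B})\bigr) \;=\; \int_0^T\!\!\int_{\R^d} D_{s,x} f_T({\dot B})\,{\dot B}(s,x)\,\d x\,\d s \;=\; \frac{\gamma}{T}\int_0^T\!\!\int_{\R^d} \bE_T[\kappa(x-\omega_s)]\,{\dot B}(s,x)\,\d x\,\d s,
\end{equation*}
which is exactly $\gamma f_T^\prime(\gamma,{\dot B})$ by \eqref{fprime}. Inserting both pieces into \eqref{OU4} gives the desired identity.

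For the $L^2(\P)$ bound, the first (deterministic) term is trivial; for the second, the Cauchy--Schwarz argument used in \eqref{function Phi-2} gives $|V(\cdot)|\leq V(0)$, so the integral is pointwise bounded by $\gamma^2V(0)$; for the third, \eqref{fprimeL2} gives $\E[(f_T^\prime)^2]\leq V(0)/T$. Thus $\mathscr L f_T({\dot B})\in L^2(\P)$, and by distributional invariance the same holds for $\mathscr L f_T(\boldsymbol{\dot B}_t)$.

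I expect no serious obstacle: the content is a direct calculation once \eqref{OU4} is accepted. The one point requiring a little care is to legitimately interchange the $\d x$-integration with the GMC expectation in the second-derivative term and to recognise the Skorokhod-type integral against ${\dot B}$ as $\gamma f_T^\prime(\gamma,{\dot B})$; both are immediate consequences of the smoothness, compact support and symmetry of $\kappa$ together with the explicit forms \eqref{first Malliavin of free energy}--\eqref{second Malliavin of free energy}.
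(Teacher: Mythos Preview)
Your proposal is correct and follows essentially the same approach as the paper: both proofs substitute the explicit Malliavin derivatives \eqref{first Malliavin of free energy}--\eqref{second Malliavin of free energy} into the representation \eqref{OU4}, identify the divergence term with $\gamma f_T^\prime$ via \eqref{fprime}, and bound the result in $L^2(\P)$ using $0\le V(\cdot)\le V(0)$ together with \eqref{fprimeL2}. Your write-up is in fact slightly more explicit than the paper's in spelling out the $\d x$-integrations that turn $\kappa(x-\omega_s)\kappa(x-\omega_s^\prime)$ into $V(\omega_s-\omega_s^\prime)$.
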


\begin{proof} Indeed, recall \eqref{first Malliavin of free energy} and \eqref{second Malliavin of free energy} for the first two Malliavin derivatives of $f_T$. 
	Then,
	\begin{align}
	\label{skalar with Malliavin derivative}
	\Bt\big( D f_T(\Bt) \big)=\int_0^T\int_{\R^d}\frac{\gamma}{T}\bETt[\kappa(y-\omega_s)]\Bt(s,y)\d y\d s \stackrel{\eqref{fprime}}=\gamma f_{T}^\prime(\gamma,\Bt)
	\end{align}
	and with the Ornstein-Uhlenbeck generator, see \eqref{OU4}, we have \eqref{OU operator applied to f}. Moreover, since $0\leq V(\cdot)\leq V(0)$, 
	by using \eqref{fprimeL2}, we have 
	$\E[\mathscr  L f_T(\Bt)^2]=\E[\mathscr  L f_T({\dot B})^2] \leq C(\gamma^4 V(0)^2+ \|f^\prime(\gamma,\cdot)\|_{L^2(\P)}^2)<\infty$.
\end{proof}

The following Poincar\'e inequality will be quite useful in the context of proving Theorem \ref{localization via overlaps}. 
\begin{lemma}
	\label{Lemma 4.3}
	Let $f_T(\boldsymbol{{\dot B}}_t)$ be the functional defined in \eqref{definition of the functional f} w.r.t. the flow $\boldsymbol{{\dot B}}_t$ defined in \eqref{flow}. Then, for any $T$ and $\gamma$, 
	$$
	\mathrm{Var}^{\P}\bigg(\frac{1}{t}\int_0^t\mathscr L f_T(\boldsymbol{{\dot B}}_r)\d r\bigg)\leq \frac{2}{t}\E\big[\Vert Df_T({\dot B})\Vert^2_{L^2([0,T]\otimes \R^d)}\big].
	$$
\end{lemma}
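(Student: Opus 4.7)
The strategy is to exploit the spectral theory of the Ornstein--Uhlenbeck semigroup $P_r:=\e^{r\mathscr L}$ associated with the flow $(\boldsymbol{\dot B}_r)_{r\geq 0}$ of \eqref{flow}. This flow is a stationary, reversible Markov process on $(\mathcal E,\mathcal F,\P)$ with invariant measure $\P$. Its Wiener chaos decomposition diagonalizes $\mathscr L$: writing $L^2(\P)=\bigoplus_{n\geq 0}\mathcal H_n$ and $F=\sum_n F_n$ with $F_n\in\mathcal H_n$, one has $\mathscr L F=-\sum_{n\geq 1} n F_n$ and $P_r F=\sum_n\e^{-nr}F_n$ by Mehler's formula. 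In particular $\E[\mathscr L f_T]=0$, and Lemma \ref{claim2} together with \eqref{fL2} ensures that both $f_T$ and $\mathscr L f_T$ lie in $L^2(\P)$.

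The first step is to use Fubini, the stationarity of the flow, and the Markov/reversibility property to write
\begin{align*}
\mathrm{Var}^{\P}\bigg(\frac{1}{t}\int_0^t\mathscr L f_T(\Br)\d r\bigg)=\frac{2}{t^2}\int_0^t\!\!\int_0^s\E\big[\mathscr L f_T\cdot P_{s-r}(\mathscr L f_T)\big]\d r\d s.
\end{align*}
The second step is to expand via chaos: the integrand becomes $\sum_{n\geq 1}n^2\e^{-n(s-r)}\E[F_n^2]$, by orthogonality of the $\mathcal H_n$'s. Since $\int_0^t\!\int_0^s\e^{-n(s-r)}\d r\,\d s = \frac{t}{n}-\frac{1-\e^{-nt}}{n^2}\leq \frac{t}{n}$, summing gives
\begin{align*}
\mathrm{Var}^{\P}\bigg(\frac{1}{t}\int_0^t\mathscr L f_T(\Br)\d r\bigg)\leq \frac{2}{t}\sum_{n\geq 1}n\,\E[F_n^2].
\end{align*}

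The conclusion then follows from the standard Malliavin-calculus identity $\E[\Vert Df_T\Vert^2_{L^2([0,T]\otimes \R^d)}]=\sum_{n\geq 1}n\E[F_n^2]$, which reflects that the Malliavin derivative maps $\mathcal H_n$ onto an $\mathcal H_{n-1}$-valued process with chaos-norm multiplier $\sqrt n$ (equivalently, the quadratic form of $-\mathscr L=\delta\circ D$ equals the Dirichlet form of $D$). The required domain condition $f_T\in\mathbb D^{1,2}$ is immediate from \eqref{fL2} and the explicit representation \eqref{first Malliavin of free energy}, whose $L^2(\P)$-norm is bounded by \eqref{fprimeL2}. The main technical obstacle --- though routine in this setup --- is the careful justification that one may interchange the expectation, the double integral in $(r,s)$, and the chaos sum, which is controlled by the $L^2(\P)$-bound on $\mathscr L f_T$ recorded in \eqref{OU operator applied to f}. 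An essentially equivalent but chaos-free presentation uses the spectral theorem for the non-positive self-adjoint operator $-\mathscr L$ directly, reducing the bound on the double integral to $\int_0^t(1-\e^{-\lambda s})\d s\leq t$ applied to the spectral measure of $f_T$, and concluding by $\langle-\mathscr L f_T,f_T\rangle_{L^2(\P)}=\E[\Vert Df_T\Vert^2_{L^2([0,T]\otimes \R^d)}]$.
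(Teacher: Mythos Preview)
Your proposal is correct and follows essentially the same route as the paper: express the variance as a double integral of the covariance using stationarity and the Markov property of the Ornstein--Uhlenbeck flow, diagonalize $\mathscr L$ spectrally, bound the resulting integral $\int_0^t(1-\e^{-\lambda s})\d s\leq t$ eigenvalue by eigenvalue, and conclude via the Dirichlet-form identity $\E\|Df_T\|^2=\langle -\mathscr L f_T,f_T\rangle_{L^2(\P)}$. The only cosmetic difference is that you invoke the Wiener chaos decomposition (eigenvalues $n\in\mathbb N_0$) explicitly, whereas the paper works with a generic orthonormal eigenbasis $\{\psi_j\}$ of $\mathscr L$ with eigenvalues $\{\lambda_j\}$; your ``chaos-free'' alternative at the end is precisely the paper's argument.
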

\begin{proof}
	Let $\mathrm{Var}$ and $\mathrm{Cov}$ stand for variance and covariance w.r.t. $\P$, while $(\mathscr P_t)_{t\geq 0}$ stands for the Ornstein-Uhlenbeck semigroup. That is, for $t\geq 0$, and a test function $g\in L^2(\P)$ defined on the path space of the white noise so that $\mathscr Lg\in L^2(\P)$, 
	$$
	(\mathscr P_t g)({\dot B})=\E\big[g(\Bt)\big|{\dot B}\big].
	$$
	Then, for any $0\leq s\leq t$,
	\begin{align*}
	\mathrm{Cov}(g(\Bs),g(\Bt))&=\mathrm{Cov}(g(\Bs),\E[g(\Bt)|\Bs])\\
	&=\mathrm{Cov}(g(\Bs),\mathscr P_{t-s}g(\Bs))=\mathrm{Cov}(g({\dot B}),\mathscr P_{t-s}g({\dot B})).
	\end{align*}
	Now let $\{\psi_j\}_{j\geq 0}$ be an orthonormal basis of $L^2(\P)$ consisting of eigenfunctions of $\mathscr L$, with $\psi_0\equiv 1$, $\mathscr L\psi_0=\lambda_0\psi_0=0$ and $\mathscr L\psi_j=-\lambda_j\psi_j$ with $\lambda_j>0$ for $j\geq 1$. Then, for $g=\sum_{j\geq 0}a_j\psi_j\in L^2(\P)$, we have
	$$
	\mathscr Lg=-\sum_{j\geq 1}\lambda_ja_j\psi_j,\quad \mathscr P_t\mathscr L g=-\sum_{j\geq 1}\lambda_ja_j\e^{-\lambda_jt}\psi_j.
	$$
	Further, if $g_1=\sum_{j\geq 0}a_j\psi_j$, $g_2=\sum_{j\geq 0}b_j\psi_j\in L^2(\P)$, then
	$$
	\mathrm{Cov}(g_1({\dot B}),g_2({\dot B}))=\sum_{j\geq 1}a_jb_j
	$$
	and if in addition $D^{\ssup 2}g_1,D^{\ssup 2}g_2$ exist, then
	\begin{align}
	\label{Dirichlet form}
	-\E[g_1({\dot B})\mathscr Lg_2({\dot B})]=\E[D g_1({\dot B})D g_2({\dot B})].
	\end{align} 
	Hence,
	$
	\mathrm{Cov}(\mathscr Lg(\Bs),\mathscr L g(\Bt))=\mathrm{Cov}(\mathscr Lg({\dot B}_0),\mathscr P_{t-s}\mathscr L g({\dot B}_0))=\sum_{j\geq 1}\lambda_j^2a_j^2\e^{-\lambda_j(t-s)}.
	$
	Again, if $g=\sum_{\geq 0}a_j\psi_j$, then by \eqref{Dirichlet form},
	$$
	\E\Vert Dg({\dot B})\Vert^2=-\E[g({\dot B})\mathscr L g({\dot B})]=\sum_{j\geq 1}\lambda_ja_j^2.
	$$
	Thus,
	$$
	\int_0^t\mathrm{Cov}(\mathscr L g(\Br),\mathscr Lg(\Bt))\d r=\sum_{j\geq 1}\int_0^t\lambda_j^2a_j^2\e^{-\lambda_j(t-r)}\d r=\sum_{j\geq 1}\lambda_ja_j^2(1-\e^{-\lambda_jt})\leq \E\Vert Dg({\dot B})\Vert^2
	$$
	and finally
	\begin{align*}
	\mathrm{Var}\bigg(\frac{1}{t}\int_0^t\mathscr Lg(\Br)\d r\bigg)=\frac{1}{t^2}\int_0^t\int_0^t\mathrm{Cov}(\mathscr Lg(\Br),\mathscr Lg(\Bs))\d s\d r\\
	=\frac{2}{t^2}\int_0^t\int_0^r\mathrm{Cov}(\mathscr Lg(\Br),\mathscr Lg(\Bs))\d s\d r\leq \frac{2}{t}\E\Vert Dg({\dot B})\Vert^2.
	\end{align*}
	We now choose $g({\dot B})=f_T(\gamma,{\dot B})$ and since $f_T, \mathscr L f_T\in L^2(\P)$, we apply the above bound. 
\end{proof}

\begin{cor}\label{Proposition 4.0}
	For any $\eps>0$ and $\gamma>0$, 
	$$
	\P\bigg[\frac 1 t\int_0^t (\mathscr L f_T)(\Br) \d r > \frac{\gamma \eps}2\bigg] \leq \frac{8V(0)}{tT\eps^2}.
	$$
\end{cor}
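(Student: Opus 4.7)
The plan is to apply Chebyshev's inequality to the centered random variable $\frac{1}{t}\int_0^t (\mathscr L f_T)(\Br)\d r$, using Lemma \ref{Lemma 4.3} to control the variance and an explicit computation of the Malliavin derivative to control the quantity $\E\|Df_T({\dot B})\|^2_{L^2([0,T]\otimes \R^d)}$.

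First I would verify that the integrand has zero mean. Since $\mathscr L = -\delta\circ D$ and the adjoint relation \eqref{byparts} / \eqref{integration by parts formula} applied to $F\equiv 1$ yields $\E[\delta(u)]=0$ for $u$ in the domain of $\delta$, one gets $\E[\mathscr L f_T({\dot B})]=0$. Because $\Br$ has the same law as ${\dot B}$ for every $r\geq 0$ (as noted just after \eqref{flow}), Fubini gives
\begin{equation*}
\E\bigg[\frac{1}{t}\int_0^t (\mathscr L f_T)(\Br)\d r\bigg]=0.
\end{equation*}
Hence by Chebyshev's inequality,
\begin{equation*}
\P\bigg[\frac{1}{t}\int_0^t(\mathscr L f_T)(\Br)\d r > \frac{\gamma\eps}{2}\bigg] \leq \frac{4}{\gamma^2\eps^2}\,\mathrm{Var}^{\P}\bigg(\frac{1}{t}\int_0^t(\mathscr L f_T)(\Br)\d r\bigg).
\end{equation*}

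Next I would invoke Lemma \ref{Lemma 4.3} to bound the variance by $\frac{2}{t}\E\|Df_T({\dot B})\|^2_{L^2([0,T]\otimes\R^d)}$, so the main remaining task is to show
\begin{equation*}
\E\big[\|Df_T({\dot B})\|^2_{L^2([0,T]\otimes\R^d)}\big] \leq \frac{\gamma^2 V(0)}{T}.
\end{equation*}
Using the explicit identity \eqref{first Malliavin of free energy}, namely $D_{s,x}f_T({\dot B})=\frac{\gamma}{T}\bET[\kappa(x-\omega_s)]$, Jensen's inequality gives
\begin{equation*}
\|Df_T({\dot B})\|^2 = \frac{\gamma^2}{T^2}\int_0^T\!\!\int_{\R^d}\bET[\kappa(x-\omega_s)]^2\,\d x\,\d s \leq \frac{\gamma^2}{T^2}\int_0^T\!\!\int_{\R^d}\bET[\kappa(x-\omega_s)^2]\,\d x\,\d s.
\end{equation*}
Since $\int_{\R^d}\kappa(x-\omega_s)^2\,\d x = \|\kappa\|_2^2 = (\kappa\star\kappa)(0) = V(0)$ by the rotational symmetry of $\kappa$, the inner integral equals $V(0)$ deterministically, and the bound $\frac{\gamma^2 V(0)}{T}$ follows.

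Substituting this into the variance bound yields $\mathrm{Var}\leq \frac{2\gamma^2 V(0)}{tT}$, and combining with Chebyshev gives the desired estimate $\frac{8V(0)}{tT\eps^2}$. I do not anticipate a genuine obstacle here — once Lemma \ref{Lemma 4.3} and the explicit expression \eqref{first Malliavin of free energy} are in hand, the corollary is essentially a one-line consequence; the only small care-point is the vanishing of $\E[\mathscr L f_T(\Br)]$, which relies on the stationarity of the Ornstein--Uhlenbeck flow established just after \eqref{flow}.
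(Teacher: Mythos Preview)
Your proof is correct and follows essentially the same approach as the paper: zero mean, Chebyshev, Lemma \ref{Lemma 4.3}, then bound $\E\|Df_T\|^2$ via \eqref{first Malliavin of free energy} and Jensen to get $\frac{\gamma^2 V(0)}{T}$. The only cosmetic difference is that the paper justifies $\E[\mathscr L f_T(\Br)]=0$ via the eigenfunction expansion ($\mathscr L f_T=\sum_{j\ge 1}a_j\lambda_j\psi_j$ with $\psi_j\perp 1$), whereas you use the adjoint relation $\E[\delta(u)]=0$ together with the stationarity of the flow.
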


\begin{proof}
	Recall that $\{\psi_j\}_{j\geq 0}$ is an orthonormal basis of $L^2(\P)$ consisting of eigenfunctions of $\mathscr L$, with $\psi_0\equiv 1$. Since $\mathscr Lf_T(\cdot)= \sum_{j\geq 1} a_j \lambda_j \psi_j(\cdot)$ and 
	$\psi_j \bot 1$ for all $j\geq 1$, we have $\E[\mathscr L f_T(\Bt)]=0$. Then by Lemma \ref{Lemma 4.3} and by Jensen's inequality, 
	\begin{align*}
	\mathrm{Var}\bigg[\frac 1 t\int_0^t (\mathscr L f_T)(\Br) \d r\bigg]
	&\leq \frac{2}{t}\E\bigg[\frac{\gamma^2}{T^2}\int_0^T\int_{\R^d} \bET[\kappa^2(y-\omega_s)]\d y\d s\bigg]= \frac{2\gamma^2V(0)}{tT}.
	\end{align*}
	Therefore, the claim follows by Chebyshev's inequality.
\end{proof}

\section{Proofs of main results: Theorem \ref{Theorem 2 new} - Theorem \ref{localization via overlaps}}\label{sec-proofs}


\subsection{Proof of Theorem \ref{Theorem 2 new}.} Theorem \ref{Theorem 2 new} will be proved in four steps.

\noindent{\bf Step 1:} For any $x\in\R^d$ and $A\in\mathcal G$ we set (recall the notation from \eqref{notation}-\eqref{Mprod})
$$\mathscr Z_T^\ssup{x}(A):=\bE_x\big[\1_A\,\,\e^{\gamma\mathscr H_T(\omega)-\frac{\gamma^2}{2}TV(0)}\big].$$
\begin{lemma}
	\label{Rewrite of the partition function}
	We set $M_T^\ssup{1}=\gamma\int_0^T\int_{\R^d}\bEt\big[\kappa(y-\omega_t)\big]{\dot B}(t,y) \d y  \,\d t$. Let $\bQ_{t}:= \hP_t[\omega_t\in \cdot] \in \Mcal_1(\R^d)$ so that 
$\widetilde{\bQ}_t \in \X$. Let $F_\gamma$ and $\overline{\mathscr G}_t$ be the functionals defined in \eqref{functional Phi} and \eqref{scr F}, respectively.  Then 
\begin{equation}\label{rewrite-partition}
	\begin{aligned}
	\frac 1T &\log \mathscr Z_T=\frac 1 TM_T^\ssup{1}-\frac 1 T  \int_0^TF_\gamma(\widetilde{\bQ}_t) \d t \qquad \qquad\text{and}\\
	\frac{1}{T}&\log(\overline{\mathscr G}_T(\xi))
	=\frac{1}{T}M_T^\ssup{2}-\frac{\gamma^2}{2T}\int_0^T\d t\bigg( \sum_{\widetilde{\alpha}_1,\widetilde{\alpha}_2\in\xi}\int_{\R^{2d}}V(x_1-x_2)\\
	&\qquad\qquad\qquad \qquad\qquad\qquad\qquad\qquad\times \prod_{j=1}^2\frac{1}{\overline{\mathscr G}_t(\xi)}\int_{\R^d}\alpha_j(\d z_j)\mathscr Z_t^\ssup{z_j}(\omega_t^\ssup{j}\in\d x_j)\bigg),
	\end{aligned}
	\end{equation}
	where $M_T^\ssup{2}$ is a mean zero martingale defined below in \eqref{definition M^2}.
\end{lemma}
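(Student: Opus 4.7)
\textbf{Proof plan for Lemma \ref{Rewrite of the partition function}.} My approach is to view $\mathscr Z_t$ (and $\mathscr G_t(\xi)$) as processes in $t$, write them as stochastic exponentials against the white noise, and then apply It\^o's formula to their logarithms. The drift term produced by It\^o will give the energy functional $F_\gamma$ (respectively its ``bi-orbit'' analogue), while the martingale part will be identified with $M_T^{(1)}$ (resp.\ $M_T^{(2)}$).

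First I would fix $\omega \in \mathscr C_T$ and observe that $t\mapsto \exp\bigl(\gamma \mathscr H_t(\omega)-\tfrac{\gamma^2 t}{2}V(0)\bigr)$ is the Dol\'eans--Dade exponential of the It\^o integral $\gamma\int_0^t\!\!\int \kappa(\omega_s-y)\,\dot B(\d s,\d y)$, hence an $(\mathcal F_t)$-martingale under $\mathbf P$. By Fubini (valid since the exponential has a controlled $L^2$-bound via \eqref{fL2}), $\mathscr Z_t=\mathbf E_0[\exp(\gamma\mathscr H_t-\tfrac{\gamma^2 t}{2}V(0))]$ is itself a positive martingale whose It\^o differential is
\begin{equation*}
\d \mathscr Z_t \;=\; \gamma \int_{\R^d} \mathbf E_0\!\left[\kappa(\omega_t-y)\,\e^{\gamma\mathscr H_t-\tfrac{\gamma^2 t}{2}V(0)}\right] \dot B(t,\d y)\,\d t.
\end{equation*}
The key observation is that dividing the integrand by $\mathscr Z_t$ produces $\widehat{\mathbf E}_t[\kappa(\cdot -\omega_t)]$, so this already identifies $M_T^{(1)}$ after integrating. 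Now apply It\^o's formula to $\log\mathscr Z_t$: the martingale part is exactly $\d M_t^{(1)}$, and the It\^o correction $-\tfrac{1}{2\mathscr Z_t^2}\d\langle \mathscr Z\rangle_t$ is computed by doubling the expectation onto two independent Brownian paths $(\omega,\omega')$, using $\int \kappa(\omega_t-y)\kappa(\omega'_t-y)\,\d y=(\kappa\star\kappa)(\omega_t-\omega'_t)=V(\omega_t-\omega'_t)$, which yields
$$\tfrac{1}{2\mathscr Z_t^2}\d\langle\mathscr Z\rangle_t \;=\; \tfrac{\gamma^2}{2}\,\widehat{\mathbf E}_t^{\otimes}[V(\omega_t-\omega'_t)]\,\d t \;=\; F_\gamma(\widetilde{\mathbf Q}_t)\,\d t,$$
the last equality because the double integral against the law $\mathbf Q_t\otimes \mathbf Q_t$ depends only on the orbit $\widetilde{\mathbf Q}_t$ by translation invariance of $V$. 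Integrating from $0$ to $T$ and dividing by $T$ gives the first identity in \eqref{rewrite-partition}.

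For the second identity I would argue identically after decomposing $\overline{\mathscr G}_t(\xi)=\mathscr G_t(\xi)+\bigl(1-\sum_i \alpha_i(\R^d)\bigr)$, where the constant in brackets equals $\mathbf E[\mathscr Z_t-\mathscr G_t(\xi)]$ by translation invariance and $\mathbf E[\mathscr Z_t^{\ssup z}]=1$, and is thus immaterial for the stochastic dynamics. Writing $\mathscr G_t(\xi)=\sum_i\int \alpha_i(\d z)\,\mathscr Z_t^{\ssup z}$ and applying the above calculation to each $\mathscr Z_t^{\ssup z}$ gives
$$\d \mathscr G_t(\xi) \;=\; \gamma \int_{\R^d}\sum_i\!\int\alpha_i(\d z)\,\mathbf E_z\!\left[\kappa(\omega_t-y)\e^{\gamma\mathscr H_t-\tfrac{\gamma^2 t}{2}V(0)}\right] \dot B(t,\d y)\,\d t,$$
and the quadratic variation now carries a double sum over orbits $(\widetilde\alpha_1,\widetilde\alpha_2)\in\xi\times\xi$ (not just the diagonal, since cross-covariances $\d\langle\mathscr Z^{\ssup{z_1}},\mathscr Z^{\ssup{z_2}}\rangle_t$ do not vanish). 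Defining
\begin{align}\label{definition M^2}
M_T^{\ssup 2} \;=\; \gamma\int_0^T\!\!\int_{\R^d} \frac{1}{\overline{\mathscr G}_t(\xi)}\sum_i\!\int \alpha_i(\d z)\,\mathbf E_z\!\left[\kappa(\omega_t-y)\e^{\gamma \mathscr H_t-\tfrac{\gamma^2 t}{2}V(0)}\right] \dot B(t,\d y)\,\d t,
\end{align}
which is a mean-zero martingale by It\^o isometry, and applying It\^o to $\log\overline{\mathscr G}_t(\xi)$ gives the claimed identity after recognising $\tfrac{1}{\overline{\mathscr G}_t(\xi)}\int \alpha_j(\d z_j)\,\mathscr Z_t^{\ssup{z_j}}(\omega_t^{\ssup j}\in\d x_j)$ as the density describing the transport in \eqref{alpha after time t}.

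The main technical point, and the only place requiring care, is justifying the interchange of It\^o integration and the Brownian expectation $\mathbf E_0$ (and the finite/countable sums over orbits $i$), together with positivity of $\overline{\mathscr G}_t(\xi)$ so that $\log\overline{\mathscr G}_t$ is well-defined; positivity follows because either $\sum_i\alpha_i(\R^d)<1$ (giving a strictly positive additive constant) or the Brownian expectations contribute strictly positive mass, while the Fubini steps are controlled by the second-moment bound \eqref{fL2} exactly as for $\mathscr Z_t$.
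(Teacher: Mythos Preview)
Your proposal is correct and follows essentially the same route as the paper's proof: both arguments apply It\^o's formula to the logarithm of the partition function, identify the martingale part with $M_T^{(1)}$ (resp.\ $M_T^{(2)}$), and recognise the It\^o correction as the $V$-overlap integral, i.e.\ $F_\gamma(\widetilde{\bQ}_t)$. The only cosmetic difference is that the paper applies It\^o to the unnormalised $Z_t=\bE_0[\e^{\gamma\mathscr H_t}]$ first (picking up an extra drift $\tfrac{\gamma^2}{2}V(0)$ from the $\kappa^2$-term in $\d Z_t$, which then cancels against the normalisation when passing to $\log\mathscr Z_T$), whereas you work directly with the normalised $\mathscr Z_t$ as a Dol\'eans--Dade martingale, which avoids that cancellation step.
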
	
\begin{proof}
Writing $Z_T= \bE_0[\e^{\gamma \mathscr H_T}]$ and applying It\^o's formula to $\log Z_T$ we have 
\begin{equation}\label{ub0}
\d \log Z_T= \frac 1 {Z_T} \d Z_T - \frac 1 {2Z_T^2} \d\langle Z_T\rangle
\end{equation}
Again by It\^o's formula, 
\begin{equation}\label{ub1}
\d Z_t= \bE_0\bigg[\gamma \int_{\R^d}  \e^{\gamma \mathscr H_t(\omega)} \kappa(y-\omega_t) {\dot B}(t,y) \d y\bigg] \d t + \bE_0\bigg[\frac {\gamma^2}2\int_{\R^d} 
\e^{\gamma \mathscr H_t(\omega)} \kappa(y-\omega_t)^2 \d y\bigg]\d t.
\end{equation}
The quadratic variation of $Z_t$ is also given by 
       \begin{equation}\label{ub2}
	\begin{aligned}
	\mathrm{d}\langle Z_t\rangle&=\mathrm{d}\bigg\langle  \bE_0\bigg[\gamma \int_{\R^d}\, \e^{\gamma \mathscr H_t(\omega)}\kappa(y-\omega_t){\dot B}(t,y) \d y \bigg]\bigg{\rangle}\\
	&=\gamma^2\bE_0^{\otimes }\left[\int_{\R^d}\, \e^{\gamma(\mathscr H_t(\omega)+\mathscr H_t(\omega^{\prime}))}\kappa(y-\omega_t)\kappa(y-\omega_t^{\prime})\d y\right] \d t\\
	&=\gamma^2\bE_0^{\otimes }\left[V(\omega_t-\omega_t^{\prime})\,\,\e^{\gamma (\mathscr H_t(\omega)+\mathscr H_t(\omega^{\prime}))}\right] \d t
	\end{aligned}
	\end{equation}
	where $\omega^{\prime}$ is another Brownian motion independent of $\omega$. 
	Combining \eqref{ub0}-\eqref{ub2} yields 
	\begin{align*}
	\mathrm{d}\log Z_t&=\gamma \bEt\bigg[\int_{\R^d}\kappa(y-\omega_t){\dot B}(t,y) \d y\bigg]\mathrm{d}t+\frac{\gamma^2}{2}\bEt\bigg[\int_{\R^d}\kappa(y-\omega_t)^2\mathrm{d}y\bigg]\mathrm{d}t \\
	&\qquad\qquad-\frac{\gamma^2}{2}\bEt^{\otimes }\left[V(\omega_t-\omega_t^{\prime})\right]\mathrm{d}t
	\end{align*}
	where we recall from \eqref{Mprod} that $\bEt$ denotes expectation w.r.t. the GMC measure $\hP_t$, while $\bEt^{\otimes }$ stands for the same w.r.t. the product GMC measure $\hP_t^\otimes$. 
	Since $\int_{\R^d} \kappa(y-\omega_t)^2 \d y=\int_{\R^d} \kappa^2(y)\d y= V(0)$, the display above now yields
	\begin{align*}
{\log Z_T}&= \gamma\int_0^T \bEt\left[\int_{\R^d}\kappa(y-\omega_t){\dot B}(t,y)\d y\right]\mathrm{d}t+\frac{\gamma^2 T V(0)}2
-\frac{\gamma^2}{2}\int_0^T\bEt^{\otimes }\left[V(\omega_t-\omega_t^{\prime})\right]\mathrm{d}t,
\end{align*}
where $\omega,\omega^\prime$ are independent Brownian motions. Consequently,
$$
\frac{1}{T}\log\mathscr Z_T=\frac{1}{T}\log Z_T-\frac{\gamma^2}{2}V(0)=\frac{M_T^{\ssup 1}}{T}-\frac{\gamma^2}{2T}\int_0^T\bEt^{\otimes }\left[V(\omega_t-\omega_t^{\prime})\right]\mathrm{d}t.
$$
From the above display the first identity in \eqref{rewrite-partition} follows. Repeating the It\^ o computation for $\log(\overline{\mathscr G}_T(\xi))$ also proves the second identity in \eqref{rewrite-partition} with
\begin{align}\label{definition M^2}
M_T^\ssup{2}=\gamma\int_0^T\d t\int_{\R^d}\d y\,\,\frac{1}{\overline{\mathscr G}_T(\xi)}\sum_{\widetilde{\alpha}\in\xi}\int_{\R^d}\alpha(\d z)\bE_z\bigg[\kappa(y-\omega_s)\e^{\gamma\mathscr H_t(\omega)-\frac{\gamma^2}{2}TV(0)}\bigg]{\dot B}(s,y).
\end{align} 	
\end{proof} 

A similar computation as Lemma \ref{Rewrite of the partition function} also provides 

\begin{lemma}\label{lemma-CC}
For any $\gamma>0$, $\delta>0$ and as $T\to\infty$, we have $\log Z_T- \E[\log Z_T] = o(T)$.
\end{lemma}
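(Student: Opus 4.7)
The plan is to exploit the Malliavin calculus framework of Section \ref{sec-spaceX}: since $\log Z_T$ is a smooth functional of the white noise ${\dot B}$, a uniform (in ${\dot B}$) bound on its Malliavin derivative forces sub-Gaussian concentration of $\log Z_T$ around its mean at scale $\sqrt T$, which is more than enough for the desired $o(T)$ estimate.

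First, starting from \eqref{first Malliavin of free energy} and the identity $\log Z_T = T f_T({\dot B})$, one has
$$
D_{t,x}[\log Z_T] = \gamma\,\bET\bigl[\kappa(x-\omega_t)\bigr].
$$
Applying Jensen's inequality pointwise and using $\int_{\R^d}\kappa^2(x)\,\d x = V(0)$, one gets
$$
\|D[\log Z_T]\|_{L^2([0,T]\otimes\R^d)}^2 \leq \gamma^2 \int_0^T \bET\Bigl[\int_{\R^d}\kappa^2(x-\omega_t)\,\d x\Bigr]\,\d t = \gamma^2 T V(0)
$$
$\P$-almost surely. That is, $\log Z_T$ is a $\gamma\sqrt{TV(0)}$-Lipschitz functional on the abstract Wiener space in the Cameron--Martin direction.

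I would then invoke the Borell--TIS form of Gaussian concentration for Wiener functionals with almost surely bounded Malliavin derivative (the exponential counterpart of the Poincar\'e bound underlying Lemma \ref{Lemma 4.3}) to conclude
$$
\P\bigl(|\log Z_T - \E[\log Z_T]| > u\bigr) \leq 2\exp\Bigl(-\frac{u^2}{2\gamma^2 T V(0)}\Bigr).
$$
Taking $u=\eps T$ gives tails of order $\exp(-\eps^2 T/(2\gamma^2 V(0)))$, which is both vanishing as $T\to\infty$ and summable along $T\in\N$: this yields the $o(T)$ conclusion in $L^p$ and in probability immediately, and Borel--Cantelli upgrades it to an almost sure statement along integer times. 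Extension from $T\in\N$ to $T\in[0,\infty)$ uses the continuity of $T\mapsto\log Z_T$ (visible from the It\^o representation \eqref{rewrite-partition}) together with the fact that the unit-time increment $\log Z_{n+1}-\log Z_n$ consists of a bounded drift of order $O(1)$ plus a martingale increment of bounded quadratic variation --- again controlled by the same concentration bound applied to the increment over $[n,n+1]$.

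The only genuine obstacle is identifying the appropriate Lipschitz-type Gaussian concentration in the Malliavin setting and checking that the above derivative bound holds pointwise in ${\dot B}$ rather than merely in expectation; both are immediate from the explicit formula for $D_{t,x}[\log Z_T]$ and Jensen's inequality. Note that the rate $O(\sqrt T)$ delivered by this method is considerably stronger than the asserted $o(T)$, so there is substantial slack to absorb the interpolation step; in particular no subadditivity argument is required.
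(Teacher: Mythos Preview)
Your argument is correct and is essentially the paper's own proof: the paper also establishes the identical bound $\P(|\log Z_T - \E\log Z_T| > u) \leq 2\exp\bigl(-u^2/(2\gamma^2 T V(0))\bigr)$, then runs Borel--Cantelli along a subsequence and interpolates via Doob's inequality on the martingale part of the It\^o decomposition from Lemma~\ref{Rewrite of the partition function}. The only cosmetic difference is that the paper derives the concentration by writing $s\mapsto \E[\log Z_T\mid\mathcal F_s]$ as a continuous martingale with quadratic variation $\le\gamma^2 TV(0)$ and applying the exponential-martingale/Chebyshev trick directly, whereas you bound the Malliavin gradient pointwise and invoke Borell--TIS as a black box; via Clark--Ocone these are the same computation, your $D_{t,x}[\log Z_T]=\gamma\,\bET[\kappa(x-\omega_t)]$ being exactly the paper's martingale integrand.
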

\begin{proof}
For any $s\in [0,T]$ we set 
$$
X_T=\log Z_{T}-\E[\log Z_{T}], \qquad X_{T,s}=\E\big[\log Z_{T}-\E[\log Z_{T}]|\mathcal F_s\big ], 
$$
where $\mathcal F_s$ is the $\sigma$-algebra generated by the noise ${\dot B}$ up to time $s$. 
We note that $(X_{T,s})_{s\in[0,T]}$ is a martingale and also that
\begin{align*}
X_{T,s}=\E\bigg[\gamma\int_0^T&\int_{\R^d} \bEt[\kappa(y-\omega_t)]{\dot B}(t,y)\d y\mathrm{d}t\\
&+\frac{\gamma^2}{2}\int_0^T\E\bigg[\bEt^{\otimes}\left[V(\omega_t-\omega_t^{\prime})\right]\bigg]-\bEt^{\otimes}\left[V(\omega_t-\omega_t^{\prime})\right]\mathrm{d}t\bigg|\mathcal F_s\bigg].
\end{align*}
The quadratic variation of $X_{T,T}$ is given by 
\begin{align*}
\langle X_{T,T}\rangle
&=\bigg\langle\gamma\int_0^T\int_{\R^d} \bEt[\kappa(y-\omega_t)]{\dot B}(t,y)\d y\mathrm{d}t\bigg\rangle=\gamma^2\int_0^T\int_{\R^d} \Big(\bEt[\kappa(y-\omega_t)]\Big)^2\d y\mathrm{d}t
\leq \gamma^2TV(0), 
\end{align*}
which we will estimate now as follows: Indeed, again by the martingale property of $(X_{T,s})_{s\in[0,T]}$, we have that for any $a\in\R$, $\big(\exp\big\{aX_{T,s}-\frac{a^2}{2}\langle X_{T,s}\rangle\big\}\big)_{s\in[0,T]}$ is also an exponential martingale. Therefore by Chebyshev's inequality, for any $a,u>0$,
\begin{align*}
\P(X_{T}>u)\leq \E\big[\e^{aX_T}\big]\e^{-au}\leq \E\big[\e^{aX_{T,T}-\frac{a^2}{2}\langle X_{T,T}\rangle}\big]\e^{\frac{a^2}{2}\gamma^2TV(0)-au}.
\end{align*}
Since $X_{T,0}=0$, minimizing over $a$ yields
$$
\P(X_{T}>u)\leq\exp\bigg\{\min_{a>0}\Big\{\frac{a^2}{2}\gamma^2TV(0)-au\Big\}\bigg\}=\exp\bigg\{-\frac{u^2}{2\gamma^2TV(0)}\bigg\}.
$$
Since the same calculations hold by replacing $X_T$ by $-X_T$, it follows that, for any $u>0$, 
$$
\P(|\log Z_T-\E \log Z_T|> u)\leq 2\exp\bigg(-\frac{u^2}{2\gamma^2TV(0)}\bigg).
$$
In particular, for $\eta \in \big(\frac 12,1\big)$ and for a sequence defined by $T_1=1$, and $T_{n+1}=T_n+T_n^\eta$, so that $T_n=n^{\frac{1}{1-\eta}+o(1)}$ as $n\rightarrow\infty$,
the upper bound in the last display, combined with Borel-Cantelli lemma, implies that 
\begin{equation}\label{ub4}
\lim_{n\to\infty}\frac{\log Z_{T_n}- \E[\log Z_{T_n}]}{T_n}= 0,\quad\P-\mbox{ a.s.}
\end{equation}
 In order to strengthen the latter assertion for $T\to\infty$, we apply  
Lemma \ref{Rewrite of the partition function} which implies that
$\log Z_T=M_T-\frac{1}{2}\langle M_T\rangle+\frac{\gamma^2}{2}TV(0)$
where $M_T=\gamma\int_0^T\int_{\R^d} \bEt[\kappa(y-\omega_t)]{\dot B}(t,y)\d y\mathrm{d}t$ is a continuous martingale satisfying $\frac{\d}{\d T}\langle M_T\rangle\leq\gamma^2V(0)$ for all $T\geq 0$. We now fix a sequence $\eps_n\rightarrow 0$ such that $\eps_n^{-1}=n^{o(1)}$. For $n$ large enough, $\gamma^2V(0)T_n^\eta<\eps_nT_{n+1}$, and by Doob's inequality, we have 
\begin{align*}
&\P\bigg(\sup_{T_n\leq T\leq T_{n+1}}|\log Z_T-\log Z_{T_n}-\E \log Z_T+\E \log Z_{T_n}|>2\eps_nT_{n+1}\bigg) \\
&\leq \P\bigg(\sup_{T_n\leq T\leq T_{n+1}}|M_T-M_{T_n}|>\eps_nT_{n+1}\bigg)
\leq (\eps_nT_{n+1})^{-2}\E\big[\langle M_{T_{n+1}}\rangle-\langle M_{T_n}\rangle\big],
\end{align*}
and as $\E\big[\langle M_{T_{n+1}}\rangle-\langle M_{T_n}\rangle\big]\leq \gamma^2V(0)(T_{n+1}-T_n)$, the right-hand side above defines a summable series if we choose $\eta\in(1/2,1)$ large enough. Together with \eqref{ub4}, Borel-Cantelli lemma then concludes the proof of the lemma.

\end{proof}

\smallskip

\noindent{\bf Step 2:}  We will now prove 
\begin{lemma}\label{main step for variational}
	With $F_\gamma$ defined in \eqref{functional Phi}, define ${\mathcal E}_{F_\gamma}:\mathcal M_1(\X)\to\R$ to be 
\begin{align}\label{functional I_Phi}
{\mathcal E}_{F_\gamma}(\vartheta)= \int_{\X} F_\gamma(\xi) \, \vartheta(\d\xi).
\end{align}
Then ${\mathcal E}_{F_\gamma}$ is continuous on $\Mcal_1(\X)$. Moreover, with $\Pi_t(\cdot,\cdot)$ defined in \eqref{pi-function}, we have, 
	for any $T>0$ and $\xi\in\X$,$\int_0^T {\mathcal E}_{F_\gamma}(\Pi_t \delta_\xi) \, \d t\leq -\,\E[\log\mathscr  Z_T]$.
\end{lemma}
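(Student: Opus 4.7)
The continuity of $\mathcal E_{F_\gamma}$ is immediate: by Theorem~\ref{thm-compact}, $(\X,\mathbf D)$ is a compact metric space, and by Lemma~\ref{lemma-Phi}, $F_\gamma:\X\to[0,\tfrac{\gamma^2}{2}V(0)]$ is continuous and bounded, so the map $\vartheta\mapsto\int_\X F_\gamma\,\d\vartheta$ is continuous in the weak topology on $\Mcal_1(\X)$. The substantive part is the integral inequality, and here the plan is to chain two bounds:
\begin{equation*}
\int_0^T\mathcal E_{F_\gamma}(\Pi_t\delta_\xi)\,\d t
\;\leq\;-\,\E\bigl[\log\overline{\mathscr G}_T(\xi)\bigr]
\;\leq\;-\,\E[\log\mathscr Z_T].
\end{equation*}
The first will come from the It\^o decomposition in Lemma~\ref{Rewrite of the partition function} together with the sign $V=\kappa\star\kappa\geq 0$; the second will come from Jensen's inequality combined with spatial translation invariance of $\dot B$.

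\textbf{First step: It\^o expansion and dropping non-negative cross terms.} I will apply the second identity in Lemma~\ref{Rewrite of the partition function}, which writes $\log\overline{\mathscr G}_T(\xi)=M_T^\ssup{2}-\tfrac{\gamma^2}{2}\int_0^T[\cdots]_t\,\d t$ with $[\cdots]_t$ a sum over \emph{ordered} pairs $(\tilde\alpha_1,\tilde\alpha_2)\in\xi\times\xi$. Comparing \eqref{alpha after time t} and \eqref{functional Phi} shows that the diagonal piece (same orbit) reassembles into exactly $(2/\gamma^2)F_\gamma(\xi^\ssup{t})$, while the off-diagonal contributions are non-negative because $V\geq 0$. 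Dropping the off-diagonals, taking $\E$, and using that $M_T^\ssup{2}$ is a mean-zero It\^o martingale and that by \eqref{pi-function} $\pi_t(\xi,\cdot)$ is the law of $\xi^\ssup{t}$, so $\E[F_\gamma(\xi^\ssup{t})]=\mathcal E_{F_\gamma}(\Pi_t\delta_\xi)$, will yield the first bound.

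\textbf{Second step: Jensen plus translation invariance.} For the second bound, set $\mu:=\sum_i\alpha_i\in\Mcal_{\leq 1}(\R^d)$. Unwinding the definition of $\overline{\mathscr G}_T$ and using $\E[\mathscr Z_T^\ssup{z}]=1$, one obtains
\begin{equation*}
\overline{\mathscr G}_T(\xi)=\int_{\R^d}\mu(\d z)\,\mathscr Z_T^\ssup{z}\;+\;\bigl(1-\mu(\R^d)\bigr)\cdot 1,
\end{equation*}
a convex combination of random variables with total weight one. Two applications of Jensen's inequality combined with the spatial translation invariance of the space-time white noise $\dot B$, which gives $\mathscr Z_T^\ssup{z}\stackrel{d}{=}\mathscr Z_T$ for every $z\in\R^d$, will yield $\E[\log\overline{\mathscr G}_T(\xi)]\geq \mu(\R^d)\,\E[\log\mathscr Z_T]$. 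Since $\E[\mathscr Z_T]=1$, a further application of Jensen forces $\E[\log\mathscr Z_T]\leq 0$, whence $\mu(\R^d)\E[\log\mathscr Z_T]\geq\E[\log\mathscr Z_T]$ (the degenerate case $\mu\equiv 0$ being trivial since then $\overline{\mathscr G}_T\equiv 1$), completing the chain.

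\textbf{Main obstacle.} The conceptual heart lies in the first step: recognising that the cross-orbit contributions in the It\^o expansion carry the favourable sign so that they may be discarded to leave a clean bound involving only $F_\gamma$. The remaining technicalities -- that $M_T^\ssup{2}$ is a genuine $L^2$-martingale with zero mean and that $\log\overline{\mathscr G}_T(\xi)\in L^1(\P)$ (positivity from $\mathscr G_T(\xi)+1-\mu(\R^d)>0$ and moment control analogous to \eqref{fL2}--\eqref{fprimeL2}) -- are routine once the structural argument above is in place.
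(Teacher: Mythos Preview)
Your proposal is correct and follows essentially the same approach as the paper's proof: the paper also establishes the chain $\int_0^T\mathcal E_{F_\gamma}(\Pi_t\delta_\xi)\,\d t\leq -\E[\log\overline{\mathscr G}_T(\xi)]\leq -\E[\log\mathscr Z_T]$, obtaining the first inequality by dropping the nonnegative off-diagonal terms in the second It\^o identity of Lemma~\ref{Rewrite of the partition function} (using $V\geq 0$) and the second by the same two-fold Jensen argument combined with $\mathscr Z_T^\ssup{z}\stackrel{d}{=}\mathscr Z_T$ and $\E[\log\mathscr Z_T]\leq\log\E[\mathscr Z_T]=0$. The paper's presentation differs only cosmetically (it writes the constant piece as $\E\mathscr Z_T$ rather than $1$, and treats the case $\sigma(\xi)=0$ by noting $\mathcal E_{F_\gamma}(\Pi_t\delta_\xi)\equiv 0$ directly), but the structure and all key ideas are identical to yours.
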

\begin{proof}
Recall that $F_\gamma$ is continuous on $\X$ and $\X$ is a compact metric space. Thus, ${\mathcal E}_{F_\gamma}$ is also continuous on $\Mcal_1(\X)$. 
We will now prove $\int_0^T {\mathcal E}_{F_\gamma}(\Pi_t \delta_\xi) \, \d t\leq -\,\E[\log\mathscr  Z_T]$. 

Note that by the definition of ${\mathcal E}_{F_\gamma}$ and that of $\Pi_t$, we have for any $t$
\begin{align*}
{\mathcal E}_{F_\gamma}(\Pi_t\delta_{\xi})=\int_{\X}F_\gamma(\xi^{\prime})\Pi_t(\delta_{\xi},\mathrm{d}\xi^{\prime})=\int_{\X}F_\gamma(\xi^{\prime})\, \mathbf P\big[\xi^{\ssup t} \in \d\xi^\prime|\xi\big]=\E\big[F_\gamma(\xi^{\ssup t})\big]. 
\end{align*}
On the other hand, $F_\gamma(\xi^{\ssup t})=\frac{\gamma^2}{2}\sum_{\widetilde{\alpha}\in\xi}\int_{\R^d\times\R^d}V(x_1-x_2)\prod_{j=1}^2\alpha^{\ssup t}(\mathrm{d}x_j)$	and so
\begin{align}
\label{identity between I and Phi}
{\mathcal E}_{F_\gamma}(\Pi_t\delta_{\xi})=\E\bigg[\frac{\gamma^2}{2}\sum_{\widetilde{\alpha}\in\xi}\int_{\R^d\times\R^d}V(x_1-x_2)\prod_{j=1}^2\alpha^{\ssup t}(\mathrm{d}x_j)\bigg].
\end{align}
Recall that $\overline{\mathscr G}_T(\xi)=\mathscr G_T(\xi)+\E\big[\mathscr Z_T-\mathscr G_T(\xi)\big]$. We claim that
\begin{align}
\label{inequality between scr I and log}
\int_0^T{\mathcal E}_{F_\gamma}(\Pi_t\delta_{\xi})\d t\leq -\,\E\big[\log(\overline{\mathscr G}_T(\xi))\big].
\end{align}
For proving \eqref{inequality between scr I and log}, we first consider the sum on the right-hand side of \eqref{identity between I and Phi}. Since $V,\alpha,\mathscr Z_t$ and $\mathscr G_t(\xi)$ are nonnegative,
\begin{align*}
\sum_{\widetilde{\alpha}\in\xi}&\int_{\R^d\times\R^d}V(x_1-x_2)\prod_{j=1}^2\alpha^{\ssup t}(\mathrm{d}x_j)\\
&\leq \sum_{\widetilde{\alpha}_1\in\xi}\sum_{\widetilde{\alpha}_2\in\xi}\int_{\R^d\times\R^d}V(x_1-x_2)\prod_{j=1}^2\frac{1}{\overline{\mathscr G}_T(\xi)}\alpha_j(\d z_j)\bE_{z_j}\big[\1{\{\omega_t^{\ssup j}\in \d x_j\}}\e^{\gamma\mathscr H_t(\omega)}\big],
\end{align*}
thus by \eqref{identity between I and Phi},
\begin{align*}
{\mathcal E}_{F_\gamma}(\Pi_t\delta_{\xi})\leq \E\Bigg[\frac{\gamma^2}{2}\sum_{\widetilde{\alpha}_1\in\xi}\sum_{\widetilde{\alpha}_2\in\xi}\int_{\R^d\times\R^d}V(x_1-x_2)
&\prod_{j=1}^2\frac{1}{\overline{\mathscr G}_T(\xi)}\alpha_j(\d z_j)\bE_{z_j}\big[\1{\{\omega_t^{\ssup j}\in \d x_j\}}\e^{\gamma\mathscr H_t(\omega)}\big]\Bigg].
\end{align*}
Claim \eqref{inequality between scr I and log} now immediately follows from Lemma \ref{Rewrite of the partition function}, as $M_T^\ssup{2}$ in Lemma \ref{Rewrite of the partition function} is a martingale that has expectation $0$.

For any $\xi=(\widetilde\alpha_i)_{i\in I}\in \X$, let
$\sigma(\xi)=\sum_{i\in I}\alpha_i(\R^d)$ (which is well-defined on $\X$ because $\alpha(\R^d)=(\alpha\star\delta_x)(\R^d)$ for any $\alpha\Mcal_{\leq 1}$ and $x\in \R^d$) 
and $\sigma(\cdot)\geq 0$, with identity being true if and only if $\xi=\tilde 0 \in \X$. We now use \eqref{inequality between scr I and log} for those $\xi$ with $\sigma(\xi)>0$. Using the concavity of the logarithm, we obtain 
\begin{align}
\begin{split}
\label{using concavity}
\E \big[\log(\overline{\mathscr G}_T(\xi))\big]&=\E\Big[\log\big(\sigma(\xi)\frac{\mathscr G_T(\xi)}{\sigma(\xi)}+(1-\sigma(\xi))\E\mathscr Z_T\big)\Big]\\
&\geq \sigma(\xi)\E\log\Big(\frac{\mathscr G_T(\xi)}{\sigma(\xi)}\Big)+(1-\sigma(\xi))\log\big(\E \mathscr Z_T\big).
\end{split}
\end{align}
As $\int\sigma(\xi)^{-1}\sum_{\widetilde{\alpha}\in\xi}\alpha(\d x)=1$, we can use Jensen's inequality, so that
\begin{align*}
\log\Big(\frac{\mathscr G_T(\xi)}{\sigma(\xi)}\Big)=\log\bigg(\int_{\R^d}\bigg(\frac{\sum_{\widetilde{\alpha}\in\xi}\alpha(\d z)}{\sigma(\xi)}\bigg)\mathscr Z_T[z]\bigg)
\geq \int_{\R^d}\bigg(\frac{\sum_{\widetilde{\alpha}\in\xi}\alpha(\d z)}{\sigma(\xi)}\bigg)\log\mathscr Z_T[z]
\end{align*}
and since $\mathscr Z_T[z]\overset{(d)}{=}\mathscr Z_T$,
$$
\E\log\Big(\frac{\mathscr G_T(\xi)}{\sigma(\xi)}\Big)\geq \int_{\R^d}\bigg(\frac{\sum_{\widetilde{\alpha}\in\xi}\alpha(\d z)}{\sigma(\xi)}\bigg)\E\log \mathscr Z_T=\E\log\mathscr Z_T.
$$
By using Jensen's inequality once more, $\log\E \mathscr Z_T\geq \E\log \mathscr Z_T$, and both lower bounds, together with \eqref{inequality between scr I and log} and \eqref{using concavity}, yield  $\int_0^T {\mathcal E}_{F_\gamma}(\Pi_t \delta_\xi) \, \d t\leq -\,\E[\log \mathscr Z_T]$ for any $\xi\in \X$ with $\sigma(\xi)>0$. The last inequality, when $\sigma(\xi)=0$, follows immediately by Jensen's inequality. Indeed, if $\sigma(\xi)=0$, then ${\mathcal E}_{F_\gamma}(\Pi_t \delta_\xi)=0$ for all $t$ and so $\int_0^T {\mathcal E}_{F_\gamma}(\Pi_t \delta_\xi) \, \d t=- \log \E \mathscr Z_T\leq -\,\E[\log \mathscr Z_T]$. Thus, the inequality in Lemma \ref{main step for variational} holds unconditionally.
\end{proof}

\smallskip

\noindent{\bf Step 3:} Recall \eqref{pi-function} for definition of $\Pi_t$ and its fixed points 
$\mathfrak {m}=\{\vartheta\in \mathcal{M}_1(\X):\Pi_t\,\vartheta=\vartheta\text{ for all } t>0\}$.
Then the inequality in Lemma \ref{main step for variational} dictates that, for any $\vartheta\in {\mathfrak m_\gamma}$, 
$$
\begin{aligned}
-\frac 1 T\E[\log\mathscr Z_T] \geq \frac 1 T\int_{\X} \vartheta(\d\xi)\,\, \int_0^T\d t\,\, {\mathcal E}_{F_\gamma}(\Pi_t \delta_\xi)  = \frac 1 T \int_0^T \d t {\mathcal E}_{F_\gamma}(\Pi_t\vartheta) 
= {\mathcal E}_{F_\gamma}(\vartheta)
\end{aligned}
$$
which proves that $\liminf_{T\to\infty}-\,\frac 1 T\E[\log \mathscr Z_T] \geq \sup_{\vartheta\in{\mathfrak m_\gamma}} {\mathcal E}_{F_\gamma}(\vartheta)$.

Now Lemma \ref{LLN} implies that, for any $s>0$, $\mathscr W(\nu_T,\Pi_s\nu_T)\to 0$ almost surely. Combining this convergence with the fact that ${\mathfrak m_\gamma}$ is compact,
 we have the (almost sure) law of large numbers 
$\mathscr W(\nu_T,{\mathfrak m_\gamma})\to 0$ almost surely w.r.t. $\P$. 
Also note that by Lemma \ref{Rewrite of the partition function}, $\frac{1}{T}\log\mathscr Z_T=\frac{M_T^\ssup{1}}{T}-\frac 1 T  \int_0^TF_\gamma(\widetilde{\bQ}_t) \d t$, where $M_T^\ssup{1}$ is a martingale with mean zero and quadratic variation given by
\begin{align*}
\mathrm{d}{\big\langle M_T^\ssup{1}\big\rangle}=\gamma^2\int_0^T\int_{\R^d}\left(\bEt\left[\kappa(y-\omega_t)\right]\right)^2\mathrm{d}y\mathrm{d}t
&\leq \gamma^2\int_0^T\int_{\R^d}\bEt\left[\kappa(y-\omega_t)^2\right]\mathrm{d}y\mathrm{d}t
=\gamma^2 T V(0),
\end{align*}
showing that $\limsup_{T\to\infty}-\,\frac{1}{T}\log\mathscr Z_T=\limsup_{T\to\infty}\frac 1 T  \int_0^TF_\gamma(\widetilde{\bQ}_t) \d t$ almost surely. Furthermore, by definition of the occupation measures $\nu_T=\frac{1}{T}\int_0^T\delta_{\widetilde{\bQ}_t}\d t$ it holds that $\frac 1 T  \int_0^TF_\gamma(\widetilde{\bQ}_t) \d t={\mathcal E}_{F_\gamma}(\nu_T)$. Since 
${\mathcal E}_{F_\gamma}(\cdot)$ is continuous on $\Mcal_1(\X)$, using the almost sure law of large numbers $\mathscr W(\nu_T,{\mathfrak m_\gamma})\to 0$, it follows that 
\begin{equation}\label{assertion-4-thm4.9.}
\limsup_{T\to\infty}-\,\frac 1 T \log \mathscr Z_T = \limsup_{T\to\infty} {\mathcal E}_{F_\gamma}(\nu_T)\leq \sup_{\vartheta\in {\mathfrak m_\gamma}} {\mathcal E}_{F_\gamma}(\vartheta) \qquad\mbox{a.s.}
\end{equation}
On the other hand, again by Lemma \ref{Rewrite of the partition function}, and as $M_T^\ssup{1}$ has mean zero, it follows that $-\, \frac 1 T\E[\log\mathscr Z_T]= \E[{\mathcal E}_{F_\gamma}(\nu_T)]$. By Lemma \ref{lemma-Phi}, both $F_\gamma$ and ${\mathcal E}_{F_\gamma}$, are non-negative and bounded from above by $\gamma^2V(0)/2$. Therefore, by reverse Fatou's lemma and \eqref{assertion-4-thm4.9.}, 
$$
\limsup_{T\to\infty}-\,\frac 1 T\E[\log\mathscr Z_T] = \limsup_{T\to\infty} \E[{\mathcal E}_{F_\gamma}(\nu_T)] \leq \E\big[\limsup_{T\to\infty} {\mathcal E}_{F_\gamma}(\nu_T)] \leq  \sup_{\vartheta\in{\mathfrak m_\gamma}} {\mathcal E}_{F_\gamma}(\vartheta)
$$
and therefore 
$\lim_{T\to\infty}-\,\frac 1 T\E[\log\mathscr Z_T]   =\sup_{\vartheta\in{\mathfrak m_\gamma}} {\mathcal E}_{F_\gamma}(\vartheta)$. 
Finally, by combining Lemma \ref{lemma-CC} and the previous arguments we have the almost sure statement 
\begin{equation}\label{varfor}
\lim_{T\to\infty}\,\frac 1 T\log\mathscr Z_{\gamma,T}=\lim_{T\to\infty}\,\frac 1 T\E[\log\mathscr Z_{\gamma,T}]
=-\sup_{\vartheta\in{\mathfrak m_\gamma}} {\mathcal E}_{F_\gamma}(\vartheta), 
\end{equation}

\smallskip

\noindent{\bf Step 4:} In this step we will conclude the proof of Theorem \ref{Theorem 2 new} using the two results stated  below for which we will 
use the notation 
\begin{equation}\label{gamma_1}
\lambda(\gamma)=\frac{\gamma^2}{2}V(0)-\sup_{\vartheta\in{\mathfrak m_\gamma}} {\mathcal E}_{F_\gamma}(\vartheta), \quad \gamma_1=\inf\big\{\gamma>0\colon \sup_{ {\mathfrak m_\gamma}} {\mathcal E}_{F_\gamma}(\cdot)>0\big\}\in[0,\infty].
\end{equation} 
\begin{prop}\label{prop3}
	Fix $d\in\N$ and $\gamma>0$.	Then the following hold:
	\begin{itemize}
		\item[(i)] It holds $\lambda(\gamma)=\frac{\gamma^2}2 V(0)$ if $\gamma\leq\gamma_1$ and $\lambda(\gamma)<\frac{\gamma^2}2 V(0)$ if $\gamma >\gamma_1$. 	
		
		\item[(ii)] Let $f_T(\gamma)=f_T(\gamma,{\dot B})=\frac{1}{T}\log Z_{\gamma,T}$ (so that $\lim_{T\to\infty}f_T(\gamma)=\lambda(\gamma)$ almost surely)		
		and 	assume that $\lambda(\gamma)$ is differentiable at $\gamma$, then $0\leq \lambda^\prime(\gamma)\leq \gamma V(0)$ and if $\gamma_T= \gamma+ o(T)$ as $T\to\infty$, then it holds that 
		\begin{equation}\label{1st}
		\lim_{T\rightarrow \infty}f^\prime_T(\gamma_T)=\lambda^\prime(\gamma)\quad \text{a.s. and in }L^1(\P)
		\end{equation}
		and
		\begin{equation}\label{Lemma 3.11}
		\lim_{T\rightarrow \infty}\widehat\bE_{\gamma,T}\Big[\big|T^{-1}\mathscr H_T(\omega)-\lambda^\prime(\gamma)\big|\Big]=0\quad\text{a.s. and in }L^1(\P).
		\end{equation}
		\item[(iii)] Now fix $d\geq 3$. Then $\gamma_1=\gamma_1(d)>0$ 
		and for $\gamma\in[0,\gamma_1]$, $\lambda(\gamma)=\frac{\gamma^2}2V(0)$. In particular, 
		$\lambda(\gamma)$ is differentiable in $[0,\gamma_1)$ and almost surely
		$$
		\lim_{T\rightarrow \infty}T^{-1}\widehat{\bE}_{\gamma,T}[\mathscr H_T(\omega)]=\lambda^\prime(\gamma)=\gamma V(0).
		$$
	\end{itemize}
\end{prop}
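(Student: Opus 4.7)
My approach rests on the almost-sure variational representation
$\lambda(\gamma)=\tfrac12\gamma^2V(0)-\sup_{\vartheta\in\mathfrak m_\gamma}\mathcal E_{F_\gamma}(\vartheta)$
obtained in \eqref{varfor}, combined with the convexity of $f_T(\gamma)=T^{-1}\log Z_{\gamma,T}$ in $\gamma$ and the Malliavin/Ornstein--Uhlenbeck machinery of Section \ref{sec-spaceX}. The element $\delta_{\tilde 0}$ belongs to $\mathfrak m_\gamma$ for every $\gamma$ and satisfies $F_\gamma(\tilde 0)=0$, so the supremum in the variational formula is always $\ge 0$, giving $\lambda(\gamma)\le \tfrac12\gamma^2V(0)$ throughout. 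For part (i), equality holds iff the supremum vanishes: this is the definition of $\gamma_1$ on $[0,\gamma_1)$. For $\gamma>\gamma_1$ I would establish monotonicity of $g(\gamma):=\tfrac12\gamma^2V(0)-\lambda(\gamma)\ge 0$ via the standard weak--strong disorder dichotomy applied to the mean-one martingale $\mathscr Z_{\gamma,T}$: once the a.s.\ limit of $T^{-1}\log\mathscr Z_T$ is strictly negative, it stays strictly negative for every larger parameter, which can be extracted from a size-biased/Girsanov comparison between $\mathscr Z_T(\gamma)$ and $\mathscr Z_T(\gamma')$ for $\gamma<\gamma'$.

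For part (ii), the bound $\lambda'(\gamma)\in[0,\gamma V(0)]$ follows by taking $\mathbf E$ in the Malliavin identity of Lemma \ref{claim2}: since $\mathscr L$ annihilates constants, $\mathbf E[\mathscr L f_T(\dot B)]=0$, which rearranges to
\begin{equation*}
\mathbf E[f_T'(\gamma)]=\gamma V(0)-\tfrac{\gamma}{T}\,\mathbf E\!\int_0^T\!\widehat{\bE}_T^{\otimes}\!\bigl[V(\omega_s-\omega_s')\bigr]\d s\in[0,\gamma V(0)],
\end{equation*}
and passing to the limit gives the bounds on $\lambda'(\gamma)$. The pointwise convergence $f_T'(\gamma_T)\to\lambda'(\gamma)$ along $\gamma_T\to\gamma$ is a standard convex-analytic consequence of $f_T\to\lambda$ a.s.: convex $f_T$ converging pointwise to $\lambda$ (differentiable at $\gamma$) yield uniform convergence on compacts and convergence of one-sided derivatives, which sandwich $f_T'(\gamma_T)$. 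For the $L^1$ statement \eqref{Lemma 3.11}, the key identity is
\begin{equation*}
\widehat{\bE}_{\gamma,T}\bigl[e^{\epsilon(\mathscr H_T-T\lambda'(\gamma))}\bigr]=\exp\bigl\{T\bigl(f_T(\gamma+\epsilon)-f_T(\gamma)-\epsilon\lambda'(\gamma)\bigr)\bigr\},
\end{equation*}
whose right-hand side is $\exp\{T\cdot o(\epsilon)\}$ as $T\to\infty$ at a point of differentiability; Chebyshev applied symmetrically in $\pm\epsilon$ then gives exponential concentration of $T^{-1}\mathscr H_T$ around $\lambda'(\gamma)$ under $\widehat{\mathscr M}_{\gamma,T}$, and the resulting exponential tails supply the uniform integrability needed to upgrade in-probability to $L^1$ convergence.

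For part (iii), I would deduce $\gamma_1(d)>0$ in $d\ge 3$ from the classical second-moment computation
\begin{equation*}
\mathbf E\bigl[\mathscr Z_{\gamma,T}^2\bigr]=\bE_0^{\otimes 2}\!\left[\exp\!\left(\gamma^2\!\int_0^T\!V(\omega_s-\omega_s')\,\d s\right)\right],
\end{equation*}
together with transience of $\omega-\omega'$ in $d\ge 3$: the total intersection functional $\int_0^\infty V(\omega_s-\omega_s')\,\d s$ has exponential moments under $\bE_0^{\otimes 2}$ up to a positive threshold, so for sufficiently small $\gamma>0$ the martingale $\mathscr Z_{\gamma,T}$ is $L^2(\mathbf P)$-bounded, hence uniformly integrable and converges a.s.\ to a strictly positive limit. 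This yields $T^{-1}\log\mathscr Z_{\gamma,T}\to 0$ a.s., i.e.\ $\lambda(\gamma)=\tfrac12\gamma^2V(0)$ on $[0,\gamma_1)$ with $\gamma_1=\gamma_1(d)>0$. On this interval $\lambda$ is smooth with $\lambda'(\gamma)=\gamma V(0)$, and the concluding identity $T^{-1}\widehat{\bE}_{\gamma,T}[\mathscr H_T]\to\gamma V(0)$ is then immediate from part (ii).

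I expect the main technical obstacle to be the $L^1(\mathbf P)$ convergence of $f_T'(\gamma_T)$ at perturbed points $\gamma_T$: almost-sure convergence at $\gamma$ is standard from convex analysis, but upgrading to $L^1$ requires uniform integrability of $f_T'(\gamma_T)$ in $\mathbf P$, which in turn needs uniform variance bounds on $T^{-1}\widehat{\bE}_T[\mathscr H_T]$ as $T$ varies. The Poincar\'e estimate of Corollary \ref{Proposition 4.0}, together with the integration-by-parts formula \eqref{byparts}, should provide the required second-order control. A secondary subtle point is the monotonicity of $g$ invoked in (i): deriving it directly from the variational formula appears awkward because the constraint set $\mathfrak m_\gamma$ itself depends on $\gamma$, so the Girsanov/dichotomy route seems more robust.
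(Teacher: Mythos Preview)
Your proposal is essentially correct and tracks the paper's proof closely for parts (i) and (iii): the paper also reduces (i) to monotonicity of $g(\gamma)=\tfrac{\gamma^2}{2}V(0)-\lambda(\gamma)$ (citing \cite{CY06}), and for (iii) invokes the $L^2$-boundedness of $\mathscr Z_{\gamma,T}$ in $d\ge 3$ (citing \cite{MSZ16}), which is precisely the second-moment/transience argument you outline. Your derivation of $\lambda'(\gamma)\in[0,\gamma V(0)]$ via $\mathbf E[\mathscr Lf_T]=0$ is equivalent to the paper's direct Gaussian integration-by-parts computation of $\partial_\gamma\mathbf E[\log Z_{\gamma,T}]$; both yield the identity \eqref{gamma integration}.

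The genuine divergence is in \eqref{Lemma 3.11}. The paper does \emph{not} use your moment-generating-function/Chebyshev route but instead adapts Panchenko's technique \cite{P10}: it bounds $\widehat{\bE}_{\gamma_0,T}[|T^{-1}\mathscr H_T-f_T'(\gamma_0)|]$ by an integral of GMC variances over $\theta\in[\gamma_0,\gamma]$, arriving at the explicit estimate
\[
\widehat{\bE}_{\gamma_0,T}\Big[\big|T^{-1}\mathscr H_T-f_T'(\gamma_0)\big|\Big]\le 2\sqrt{\tfrac{\Psi_T(\gamma)}{T(\gamma-\gamma_0)}}+4\Psi_T(\gamma),\qquad \Psi_T(\gamma)=f_T'(\gamma)-f_T'(\gamma_0),
\]
after which differentiability of $\lambda$ at $\gamma_0$ makes $\Psi_T$ small. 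Your MGF identity is correct and does yield exponential concentration of $T^{-1}\mathscr H_T$ about $\lambda'(\gamma)$ under $\widehat{\mathscr M}_{\gamma,T}$; to extract $L^1$ from it you must integrate the tail bound and absorb the growing factor $Z_{\gamma+\epsilon,T}/Z_{\gamma,T}$, which requires choosing $\epsilon=\epsilon(\delta)$ carefully so that $\lambda(\gamma+\epsilon)-\lambda(\gamma)-\epsilon\lambda'(\gamma)<\epsilon\delta$. This works almost surely, but your remark about ``uniform integrability'' is slightly off since the underlying measures $\widehat{\mathscr M}_{\gamma,T}$ vary with $T$; one really does the layer-cake integral directly. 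The advantage of the paper's Panchenko bound is that it is pathwise and involves only $\Psi_T$, so the passage to $L^1(\mathbf P)$ reduces to controlling $\mathbf E[\Psi_T]$, which is accessible through the Gaussian concentration of Lemma \ref{lemma-CC}. Your route is arguably more elementary but, as you correctly flag, the $L^1(\mathbf P)$ upgrade is where the work lies; the Poincar\'e machinery you mention is heavier than necessary---Lemma \ref{lemma-CC} and convexity of $f_T$ already give $L^p(\mathbf P)$ bounds on $f_T'$ via difference quotients.
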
 

We will prove Proposition \ref{prop3} after completing the proof of Theorem \ref{Theorem 2 new} below, for which we will also need 
\begin{prop}\label{prop-m}
Fix $d\in \N$ and $\gamma>0$. Then there exists a non-empty, compact subset ${\mathfrak m_\gamma}\subset \Mcal_1(\X)$ such that the supremum 
\begin{equation}\label{sup}
\sup_{\vartheta\in{\mathfrak m_\gamma}}{\mathcal E}_{F_\gamma}(\vartheta) = \sup_{\vartheta\in{\mathfrak m_\gamma}} \,\,\int_{\X} F_\gamma(\xi)\vartheta(\d\xi)
\end{equation}
 is attained, and we always have $\sup_{{\mathfrak m_\gamma}}{\mathcal E}_{F_\gamma}(\cdot) \in [0,\gamma^2(\kappa\star\kappa)(0)/2]$. Moreover, there exists $\gamma_1=\gamma_1(d)$ such that $\gamma_1>0$ if $d\geq 3$; and if  $\gamma \in (0,\gamma_1]$, then ${\mathfrak m_\gamma}=\{\delta_{\widetilde 0}\}$ is a singleton consisting of the Dirac measure at $\widetilde 0 \in \X$, and consequently in this regime $\sup_{{\mathfrak m_\gamma}}{\mathcal E}_{F_\gamma}(\cdot) =0$. If $\gamma>\gamma_1$, then 
$\sup_{{\mathfrak m_\gamma}}{\mathcal E}_{F_\gamma}>0$. 
Finally, if $\vartheta \in {\mathfrak m_\gamma}$ is a maximizer of ${\mathcal E}_{F_\gamma}(\cdot)$ and $\vartheta(\xi)>0$ for $\xi=(\widetilde\alpha_i)_{i\in I}\in \X$,  
then $\sum_{i\in I}\alpha_i(\R^d)=1$ (i.e., any maximizer  of \eqref{sup} assigns positive mass only to those 
elements of $\X$ whose total mass add up to one).
\end{prop}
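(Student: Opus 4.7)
The proof splits into four parts, each building on tools already developed. The existence, compactness, attainment of $\sup_{\mathfrak m_\gamma}{\mathcal E}_{F_\gamma}$, and the bounds are immediate: Lemma \ref{lemma-m} gives $\mathfrak m_\gamma \ni \delta_{\widetilde 0}$ and $\mathfrak m_\gamma$ compact in the compact space $\mathcal M_1(\X)$; Lemma \ref{lemma-Phi} makes $F_\gamma$ continuous on $\X$ with $0 \le F_\gamma \le \gamma^2 V(0)/2$, so ${\mathcal E}_{F_\gamma}$ is continuous on $\mathcal M_1(\X)$, attains its maximum on $\mathfrak m_\gamma$, and the supremum lies in $[0, \gamma^2 V(0)/2]$, with the lower bound realized by $\delta_{\widetilde 0}$ since $F_\gamma(\widetilde 0) = 0$.

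Positivity of $\gamma_1$ in $d \ge 3$ I would obtain by the classical $L^2$-computation
\begin{equation*}
\mathbf E\bigl[\mathscr Z_{\gamma,T}^2\bigr] = \bE_0^{\otimes}\!\left[\exp\!\left(\gamma^2 \int_0^T V(\omega_s - \omega'_s)\,\d s\right)\right].
\end{equation*}
Transience of Brownian motion in $d \ge 3$ makes $\int_0^\infty V(\omega_s - \omega'_s)\,\d s < \infty$ almost surely, and Khasminskii's lemma upgrades this to a uniform exponential-moment bound for all $\gamma$ below some $\gamma_\ast(d) > 0$. The mean-one nonnegative martingale $\mathscr Z_{\gamma, T}$ is then $L^2$-bounded, hence converges to a strictly positive limit, so $T^{-1}\log \mathscr Z_{\gamma, T} \to 0$ almost surely; combined with the variational identity \eqref{varfor} this forces $\sup_{\mathfrak m_\gamma}{\mathcal E}_{F_\gamma} = 0$ on $(0, \gamma_\ast]$, whence $\gamma_1(d) > 0$.

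For $\gamma \in (0, \gamma_1]$ the supremum is zero (at $\gamma = \gamma_1$ by continuity of $\gamma \mapsto \lambda(\gamma)$, itself immediate from convexity of $\log Z_{\gamma,T}$ in $\gamma$ via H\"older). Any $\vartheta \in \mathfrak m_\gamma$ then satisfies $\int F_\gamma\,\d\vartheta = 0$, and the identity $F_\gamma(\xi) = \frac{\gamma^2}{2}\sum_i \|\kappa \star \alpha_i\|_{L^2(\R^d)}^2$ (from $V = \kappa \star \kappa$) forces $\kappa \star \alpha_i \equiv 0$ on the support of $\vartheta$. Since $\widehat\kappa(0) = 1 \neq 0$ and $\widehat\alpha_i$ is continuous, this yields $\alpha_i = 0$ for each $i$, and so $\vartheta = \delta_{\widetilde 0}$. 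The complementary positivity $\sup_{\mathfrak m_\gamma}{\mathcal E}_{F_\gamma} > 0$ for $\gamma > \gamma_1$ then amounts to monotonicity of the weak-disorder region $\{\gamma > 0 : p(\gamma) := \lim T^{-1}\log\mathscr Z_{\gamma,T} = 0\}$ being a closed interval containing $0$; this is a standard fact obtainable via Kahane-convexity or Comets--Yoshida-type arguments applied to the martingale $\mathscr Z_{\gamma,T}$.

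The delicate last part, where the main obstacle lies, is the support structure of maximizers: the issue is producing a valid competitor inside $\mathfrak m_\gamma$, not merely in $\mathcal M_1(\X)$. The plan is by contradiction. Assume $\vartheta$ is a maximizer with $\sup > 0$ and $\vartheta(\{\xi_0\}) > 0$ for some $\xi_0 = (\widetilde\alpha_i)_i$ with $\sigma(\xi_0) := \sum_i \alpha_i(\R^d) < 1$. Appending to $\xi_0$ a tightly concentrated orbit $\widetilde\beta$ with $\beta(\R^d) = 1 - \sigma(\xi_0)$ produces $\xi_0' \in \X$ with $\sigma(\xi_0') = 1$ and
\begin{equation*}
F_\gamma(\xi_0') - F_\gamma(\xi_0) = \tfrac{\gamma^2}{2}\int_{\R^d\times \R^d} V(x - y)\,\beta(\d x)\beta(\d y) > 0.
\end{equation*}
The key structural observation making this propagate to $\mathfrak m_\gamma$ is that $\Pi_t$ preserves $\{\sigma = 1\}$: the identity $\overline{\mathscr G}_t(\xi) = \mathscr G_t(\xi) + 1 - \sigma(\xi)$ from \eqref{scr F} gives $\sigma(\xi^{\ssup t}) = \mathscr G_t(\xi)/\overline{\mathscr G}_t(\xi)$, which equals $1$ exactly when $\sigma(\xi) = 1$. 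Time-averaging $\Pi_t$ on the perturbed initial measure $\vartheta'$ (obtained by replacing the $\xi_0$-atom of $\vartheta$ by one at $\xi_0'$) and extracting a subsequential weak limit via compactness of $\mathcal M_1(\X)$ yields an element of $\mathfrak m_\gamma$ concentrated on $\{\sigma = 1\}$; combined with continuity of ${\mathcal E}_{F_\gamma}$, this element has energy strictly larger than ${\mathcal E}_{F_\gamma}(\vartheta)$, contradicting maximality. The subtle technical point is to ensure the strict energy gain survives both the semigroup action and the time-averaging, for which one exploits non-negativity of $F_\gamma$ together with the contractive bound on $\|\kappa \star \alpha\|_{L^2}$ under the evolution \eqref{alpha after time t}.
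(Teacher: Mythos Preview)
Your treatment of existence, compactness, the bounds, and the $\gamma_1>0$ assertion is fine and essentially matches the paper. Your Fourier argument for $\mathfrak m_\gamma=\{\delta_{\widetilde 0}\}$ when $\gamma\le\gamma_1$ (via $F_\gamma(\xi)=\frac{\gamma^2}{2}\sum_i\|\kappa\star\alpha_i\|_2^2$) is a clean addition that the paper does not spell out explicitly.

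The genuine gap is in the last part. Your competitor construction does not close. You perturb $\vartheta$ to $\vartheta'$ with ${\mathcal E}_{F_\gamma}(\vartheta')>{\mathcal E}_{F_\gamma}(\vartheta)$, then time-average $\frac{1}{T}\int_0^T\Pi_t\vartheta'\,\d t$ and extract a subsequential limit $\vartheta_\infty\in\mathfrak m_\gamma$. But by linearity and continuity, ${\mathcal E}_{F_\gamma}(\vartheta_\infty)=\lim_n\frac{1}{T_n}\int_0^{T_n}{\mathcal E}_{F_\gamma}(\Pi_t\vartheta')\,\d t$, and Lemma~\ref{main step for variational} gives exactly the \emph{opposite} of what you need: $\frac{1}{T}\int_0^T{\mathcal E}_{F_\gamma}(\Pi_t\delta_\xi)\,\d t\le -\frac{1}{T}\E[\log\mathscr Z_T]\to\sup_{\mathfrak m_\gamma}{\mathcal E}_{F_\gamma}$, so ${\mathcal E}_{F_\gamma}(\vartheta_\infty)\le\sup_{\mathfrak m_\gamma}{\mathcal E}_{F_\gamma}$ and no contradiction results. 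The energy gain at $t=0$ is washed out by the semigroup; ``nonnegativity of $F_\gamma$ together with a contractive bound on $\|\kappa\star\alpha\|_2$'' does not salvage this, since contractivity works against you. A secondary issue: you assume $\vartheta$ has an atom at $\xi_0$, but the claim is about the support.

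The paper's argument is both simpler and avoids the limiting step entirely. First, strict concavity of $x\mapsto x/(x+1-\sigma(\xi))$ gives $\E[\sigma(\xi^{\ssup t})]<\sigma(\xi)$ whenever $\sigma(\xi)\in(0,1)$; integrating against any $\vartheta\in\mathfrak m_\gamma$ and using $\Pi_t\vartheta=\vartheta$ directly forces $\vartheta$-a.e.\ $\xi$ to satisfy $\sigma(\xi)\in\{0,1\}$, for \emph{every} invariant $\vartheta$, not just maximizers. Second, if a maximizer $\vartheta_0$ puts mass $\vartheta_0(\mathscr S_1)\in(0,1)$ on $\mathscr S_1=\{\sigma=1\}$, then $\vartheta_0(\cdot\mid\mathscr S_1)$ is itself in $\mathfrak m_\gamma$ (because $\Pi_t$ preserves $\mathscr S_1$, the observation you also made) and has strictly larger energy since $F_\gamma>0$ on $\mathscr S_1\setminus\{\widetilde 0\}$. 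This conditioning trick keeps you inside $\mathfrak m_\gamma$ without any limiting procedure.
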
 
\begin{proof}
The set ${\mathfrak m_\gamma}\subset \Mcal_1(\X)$ has been defined in Lemma \ref{lemma-m} which also implies that ${\mathfrak m_\gamma}$ is non-empty and compact. The fact that the supremum $\sup_{{\mathfrak m_\gamma}}{\mathcal E}_{F_\gamma}(\cdot)$ is attained over ${\mathfrak m_\gamma}$ is a consequence of 
the continuity of ${\mathcal E}_{F_\gamma}$ (recall Lemma \ref{main step for variational}) on the closed subspace ${\mathfrak m_\gamma}$ of $\Mcal_1(\X)$
 which is compact (because $\X$ is compact). Now by Lemma \ref{lemma-Phi}, $\sup_{{\mathfrak m_\gamma}}{\mathcal E}_{F_\gamma}\leq \gamma^2 V(0)/2$
and by definition, $\sup_{{\mathfrak m_\gamma}}{\mathcal E}_{F_\gamma} \geq 0$ and equality holds by Part (i) of Proposition \ref{prop3}  if and only if $\gamma\leq \gamma_1$.
Moreover, by Part (iii) of Proposition \ref{prop3}, $\gamma_1>0$ if $d\geq 3$.

It remains to prove that, for $\gamma>\gamma_1$, a maximizer of ${\mathfrak m_\gamma}$ gives positive probability only to those elements $\xi\in\X$ which have
 total mass $1$. We again write $\sigma(\xi)=\sum_{i\in I}\alpha_i(\R^d)$ 
and using the strict concavity of $x\mapsto \frac{x}{x+(1-\sigma(\xi))}$ we get a strict inequality 
$
\E[\sigma(\xi^{\ssup{t}})]=\E\big[\frac{\mathscr G_t(\xi)}{\mathscr G_t(\xi)+(1-\sigma(\xi))}\big]<\frac{\E[\mathscr G_t(\xi)]}{\E[\mathscr G_t(\xi)]+(1-\sigma(\xi))}=\sigma(\xi),
$
where $\xi^{\ssup t}$ is defined in \eqref{alpha after time t}. 
It follows that, for any $t>0$, $
\int \sigma(\xi^\prime) \, \Pi_t(\vartheta, \d\xi^\prime)= \int \vartheta(\d\xi) \, \E[\sigma(\xi^{\ssup t})]  < \int \vartheta(\d\xi) \, \sigma(\xi)
$ if $\vartheta$ assigns positive mass to $\xi$. But on the other hand we also have $\Pi_t\vartheta=\vartheta$, since $\vartheta\in{\mathfrak m_\gamma}$. The resulting contradiction shows that $\vartheta$ assigns positive mass only to those $\xi\in\X$ that have total mass 0 or 1.

Next, we note that if ${\mathfrak m_\gamma}=\{\delta_{\tilde 0}\}$, then $\sup_{{\mathfrak m_\gamma}}{\mathcal E}_{F_\gamma}=0$ and by Proposition \ref{prop3} we have $\gamma\leq\gamma_1$. In other words, if $\gamma>\gamma_1$, there exists an element $\delta_{\tilde 0}\neq\vartheta_0\in{\mathfrak m_\gamma}$ which, by our previous remark, satisfies
$\vartheta_0(\mathscr S)=1 \text{ where } \mathscr S=\{\xi\in \X\colon \sigma(\xi)\in\{0,1\}\}$.
We also set $\mathscr S_1=\{\xi\in \X\colon \sigma(\xi)=1\}$. It now suffices to show that $\vartheta_0(\mathscr S_1)\in(0,1)$ implies that $\vartheta_0$ is not a maximizer of ${\mathfrak m_\gamma}$. Therefore, with $\vartheta_0(\cdot|\mathscr S_1)$ denoting conditional probability on $\X$, 
we claim that, whenever $\vartheta_0(\mathscr S_1)\in(0,1)$, then ${\mathcal E}_{F_\gamma}(\vartheta_0(\cdot|\mathscr S_1))>{\mathcal E}_{F_\gamma}(\vartheta_0)$. Indeed, by continuity of $F_\gamma$ we have $F_\gamma(\xi)>0=F_\gamma(\tilde 0)$ for any $\xi\neq \tilde 0$. Combining this with the fact that $\vartheta_0(\mathscr S)=1$ shows that, for those $\vartheta_0$ with $\vartheta_0(\mathscr S_1)\in(0,1)$,
\begin{align*}
{\mathcal E}_{F_\gamma}(\vartheta_0(\cdot|\mathscr S_1))=\int_{\mathscr S_1}F_\gamma(\xi)\vartheta_0(\d \xi)+\frac{1-\vartheta_0(\mathscr S_1)}{\vartheta_0(\mathscr S_1)}\int_{\mathscr S_1}F_\gamma(\xi)\vartheta_0(\d \xi)
>\int_{\mathscr S_1}F_\gamma(\xi)\vartheta_0(\d \xi)={\mathcal E}_{F_\gamma}(\vartheta_0).
\end{align*}
Since $\vartheta_0\in{\mathfrak m_\gamma} \Rightarrow \vartheta_0(\cdot|\mathscr S_1)\in{\mathfrak m_\gamma}$ $^{\diamond\diamond}$\footnote{$^{\diamond\diamond}$To see this, 
note that $\xi^\ssup{t}\in\mathscr S_1$ if and only if $\xi\in\mathscr S_1$. Thus, for any $A\subset\X$, we have $\pi_t(\xi,A)=\pi_t(\xi,A\cap \mathscr S_1)$ if $\xi\in\mathscr S_1$ and also $\pi_t(\xi,A\cap \mathscr S_1)=0$ if $\xi\notin\mathscr S_1$. The required implication now follows from these two identities, since $\Pi_t(\vartheta_0(\cdot|\mathscr S_1),A)=\frac{1}{\vartheta_0(\mathscr S_1)}\int_{\mathscr S_1}\pi_t(\xi,A)\vartheta_0(\d \xi)$ and 
\begin{align*}
\int_{\mathscr S_1}\pi_t(\xi,A)\vartheta_0(\d \xi)=\int_{\mathscr S_1}\pi_t(\xi,A\cap\mathscr S_1)\vartheta_0(\d \xi)+\int_{\mathscr S_1^C}\pi_t(\xi,A\cap\mathscr S_1)\vartheta_0(\d \xi)
=\Pi_t(\vartheta_0,A\cap\mathscr S_1).
\end{align*}}
, the above display implies that 
the element $\vartheta_0$ is not a maximizer, which 
completes the proof of Proposition \ref{prop-m}. 
\end{proof}

Let us now conclude 

\noindent{\bf Proof of Theorem \ref{Theorem 2 new} (assuming Proposition \ref{prop3}):} Recall that $Z_{\gamma,T}:=\bE_0[\e^{\gamma \mathscr H_T}]$ and note that, by Cauchy-Schwarz inequality, for any $\gamma>0$ and $T>0$, 
\begin{equation}\label{esti1}
\begin{aligned}
\sup_{\varphi \in \mathscr C_T} \frac 1 T\log \hP_{\gamma,T}[\mathscr N_{r,T}(\varphi)] 
&= \sup_{\varphi \in \mathscr C_T} \frac 1 T\log \bigg[\frac 1 {Z_T}\bE_0\bigg( \e^{\gamma \mathscr H_T}\1_{\mathscr N_{r,T}(\varphi)}\bigg)\bigg]\\
&= \sup_{\varphi \in \mathscr C_T} \frac 1 T\log \bigg[\frac 1 {\bE_0(\e^{\gamma\mathscr H_T})}\bE_0\bigg( \e^{\gamma \mathscr H_T}\1_{\mathscr N_{r,T}(\varphi)}\bigg)\bigg]\\
&= -\frac 1T \log\bE_0[\e^{\gamma\mathscr H_T}] + \sup_{\varphi \in \mathscr C_T} \frac 1 T\log \bE_0\bigg( \e^{\gamma \mathscr H_T}\1_{\mathscr N_{r,T}(\varphi)}\bigg) \\
&\leq  - \frac 1T \log\bE_0[\e^{\gamma\mathscr H_T}] + \frac 1 {2T} \log\bE_0[ \e^{2\gamma \mathscr H_T}]
+\sup_{\varphi\in \mathscr C_T} \frac 1{2T} \log  \bP_0[\mathscr N_{r,T}(\varphi)] 
\end{aligned}
\end{equation}
To handle the third term above, note that for any $\varphi\in \mathscr C_T$, and with $\omega,\omega^\prime$ denoting two independent Brownian paths, 
	$$
	\bP_0(\omega\in\mathscr N_{r,T}(\varphi))^2=\bE_0^\otimes\big[\1\big\{\omega\in\mathscr N_{r,T}(\varphi),\omega^\prime\in\mathscr N_{r,T}(\varphi)\big\}\big].
	$$
	Now if $\omega, \omega^\prime\in\mathscr N_{r,T}(\varphi)$, then $\|\omega-\omega^\prime\|_{\infty,T}\leq 2r$. In particular,
	$$
	\begin{aligned}
	\bE_0^\otimes\big[\1\big\{\omega\in\mathscr N_{r,T}(\varphi),&\omega^\prime\in\mathscr N_{r,T}(\varphi)\big\}\big]\leq \bE_0^\otimes\big[\1\big\{\|\omega-\omega^\prime\|_{\infty,T}\leq 2r\big\}\big]=\bP_0^\otimes\big(\|\omega-\omega^\prime\|_{\infty,T}\leq 2r\big)
	\\
	&=\bP_0\big(\sqrt 2\|\omega\|_{\infty,T}\leq 2r\big)=\bP_0(\omega\in\mathscr N_{\sqrt 2r,T}(0))
	\end{aligned}
	$$
	Combining the last two displays, we have,
	\begin{equation}\label{spectral}
	\sup_{\varphi\in\mathscr C_T}\log\bP_0\big(\omega\in\mathscr N_{r,T}(\varphi)\big)\leq \frac{1}{2}\log \bP_0\big(\omega\in\mathscr N_{\sqrt 2r,T}(0)\big).
	\end{equation} 
	However, for any $r>0$, the probability $\bP_0(\omega\in \mathscr N_{\sqrt 2r,T}(0))$ can be rewritten as $\bP_0(\tau>T)$, where $\tau$ denotes the first exit time of the standard Brownian motion from the ball $B_{\sqrt 2r}(0)$, and therefore by the spectral theorem for $-\frac{1}{2}\Delta$ with Dirichlet boundary condition on $B_{\sqrt 2r}(0)$, we have 	
	$\lim_{T\to\infty} \frac{1}{T}\log \bP_0(\omega\in\mathscr N_{\sqrt 2r,T}(0))= -\lambda_1(\sqrt 2r)$. Recall that by \eqref{varfor}, we have $\P$-a.s., $\lim_{T\to\infty} \frac 1 T\log Z_{\gamma,T}= \frac{\gamma^2 V(0)}2 - \sup_{\vartheta\in{\mathfrak m}_\gamma} \mathscr  E_{F_\gamma}(\vartheta)$. Thus, 
	 \eqref{esti1}-\eqref{spectral} yield, $\mathbf P$-a.s., 
	\begin{equation}\label{eq-final-esti}
	\begin{aligned}
	&\limsup_{T\to\infty} \sup_{\varphi \in \mathscr C_T} \frac 1 T\log \hP_{\gamma,T}[\mathscr N_{r,T}(\varphi)] \\
	&\leq - \frac 14 \lambda_1(\sqrt 2 r) - \lim_{T\to\infty} \frac 1 T \log Z_{\gamma,T} + \frac 12 \lim_{T\to\infty}\frac 1 T\log Z_{2\gamma,T} \\
	&= - \frac 14 \lambda_1(\sqrt 2 r) - \bigg[\frac{\gamma^2 V(0)}2- \sup_{\vartheta\in \mathfrak m_\gamma} {\mathcal E}_{F_\gamma}(\vartheta) \bigg] + \frac 12  \bigg[\frac{(2\gamma)^2 V(0)}2- \sup_{\vartheta\in \mathfrak m_{2\gamma}} {\mathcal E}_{F_{2\gamma}}(\vartheta) \bigg]= - \Theta,
	\end{aligned}
	\end{equation}
	as claimed in \eqref{esti} of Theorem \ref{Theorem 2 new}. By the first part (i) of Proposition \ref{prop3}, for any $d\in \N$ and $\gamma>0$, $\lambda(\gamma):= \frac{\gamma^2 V(0)}2 - \sup_{\vartheta\in \mathfrak m_\gamma} {\mathcal E}_{F_\gamma}(\vartheta) \in [0, \frac{\gamma^2 V(0)}2]$. Thus, using  that $\lambda(\gamma) \geq 0$ and $\lambda(2\gamma) \leq 2\gamma^2$, we have, for any $d\in \N$ and $\gamma>0$, 
	$\Theta \geq \frac 14 \lambda_1(\sqrt 2 r) - {\gamma^2 V(0)}$. Thus for any $d\in \N$, if $\frac 14 \lambda_1 (\sqrt 2 r) > \gamma^2  V(0)$ then $\Theta > 0$. 
	Since the map $(0,\infty)\ni r\mapsto \lambda_1(r)$ is decreasing, for any $d$ and $\gamma$, we find $r_0 > 0$ such that $\Theta>0$ if $r < r_0$. On the other hand, for given $d\in \N$ and $r>0$, we find $\gamma_c > 0$ such that $\Theta \geq \frac 14 \lambda_1(\sqrt 2 r) - {\gamma^2 V(0)} >0$ for any $\gamma < \gamma_c$. Finally, by part (iii) of Proposition \ref{prop3}, for $d\geq 3$ and $\gamma\in [0,\gamma_1/2]$, we have $\Theta= \frac 14\lambda_1(\sqrt 2r) - \frac{\gamma^2 V(0)}2$. $^{\ddagger\ddagger}$\footnote{$^{\ddagger\ddagger}$It could very well be that a strengthening of the argument above yields that (for $d\geq 3$ and $\gamma$ sufficiently small) the decay rate is simply $\Theta= \frac 14 \lambda_1(\sqrt 2 r)$ (without the penalization term $\gamma^2 V(0)/2$ being present). Instead of Cauchy-Schwarz bound above, we could have as well invoked H\"older's inequality with $1/p + 1/q=1$ and then optimize over $p>1$ (and $q>1$). However, the eigenvalue would then carry an extra term $1/q$ factor which would bring the decay rate closer to zero when $q$ gets larger.}
Together with a pointwise lower bound similar to \eqref{esti} which will be shown below in Proposition \ref{prop-esti-lower}, 
the proof of the first part of Theorem \ref{Theorem 2 new} is thus complete. Also we note that the required bound \eqref{eq-thick} in the second part follows directly from part (iii) of Proposition \ref{prop3}. \qed 	

\begin{prop}\label{prop-esti-lower}
Fix $d\in \N$, $r>0$. Then there is a constant $\rho\in (0,\infty)$ such that for any $\gamma>0$ and $\mathbf P$-a.s. 
\begin{equation}\label{esti-lower}
\liminf_{T\to\infty}\frac{1}{T}\log\hP_{\gamma,T}\big[\mathscr N_{r}(0)\big]\geq - \Big( \lambda_1(\frac r 2) + \rho + \frac {\gamma^2}2 V(0) + \sup_{\vartheta\in \mathfrak m_\gamma} {\mathcal E}_{F_\gamma}(\vartheta)\Big)\end{equation}
\end{prop}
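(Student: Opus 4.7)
Since $\mathscr N_{r/2}(0)\subseteq \mathscr N_r(0)$ and $\hP_{\gamma,T}[\mathscr N_r(0)] = \mathscr Z_T(\mathscr N_r(0))/\mathscr Z_T$ with $\mathscr Z_T(A):=\bE_0[\1_A\,\e^{\gamma\mathscr H_T - \frac{\gamma^2}{2}TV(0)}]$, it suffices to lower bound $\mathscr Z_T(\mathscr N_{r/2}(0))/\mathscr Z_T$. My strategy is to combine (i) a conditional Jensen inequality giving a lower bound on $\mathbf E[\log\mathscr Z_T(\mathscr N_{r/2}(0))]$, (ii) the Dirichlet spectral theorem for the confinement probability, (iii) a Malliavin-based almost-sure concentration of $\log\mathscr Z_T(\mathscr N_{r/2}(0))$ around its mean, and (iv) the variational formula \eqref{varfor} for $\lim T^{-1}\log\mathscr Z_T$.

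First, writing $\mathscr Z_T(\mathscr N_{r/2}(0)) = \bP_0(\mathscr N_{r/2}(0))\,\bE_0[\e^{\gamma\mathscr H_T - \frac{\gamma^2}{2}TV(0)}\,|\,\mathscr N_{r/2}(0)]$ and applying Jensen's inequality to the convex function $\exp$ under the conditional Brownian law yields
$$
\log \mathscr Z_T(\mathscr N_{r/2}(0))\ \geq\ \log\bP_0(\mathscr N_{r/2}(0)) + \gamma X_T - \tfrac{\gamma^2}{2}TV(0), \qquad X_T := \bE_0[\mathscr H_T\,|\,\mathscr N_{r/2}(0)].
$$
By Fubini, $X_T$ is a centered Gaussian in $\dot B$ with $\mathbf E[X_T]=0$, so $\mathbf E[\log\mathscr Z_T(\mathscr N_{r/2}(0))]\geq \log\bP_0(\mathscr N_{r/2}(0)) - \gamma^2TV(0)/2$; the Dirichlet spectral theorem for $-\tfrac12\Delta$ on $B_{r/2}(0)$ then gives $T^{-1}\log\bP_0(\mathscr N_{r/2}(0))\to -\lambda_1(r/2)$.

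To upgrade the mean inequality to an almost-sure statement, I would compute the Malliavin derivative (analogous to \eqref{first Malliavin of free energy}) of $\log\mathscr Z_T(\mathscr N_{r/2}(0))$: it equals $\gamma$ times a Brownian average of $\kappa(\omega_s-x)$ under the GMC measure conditioned on $\mathscr N_{r/2}(0)$, so Cauchy--Schwarz gives $\|D\log\mathscr Z_T(\mathscr N_{r/2}(0))\|_{L^2([0,T]\otimes\R^d)}^2 \leq \gamma^2 TV(0)$, identically to the unrestricted case. The Borell--Tsirelson--Ibragimov--Sudakov Gaussian concentration inequality then yields
$$
\mathbf P\big(|\log\mathscr Z_T(\mathscr N_{r/2}(0)) - \mathbf E\log\mathscr Z_T(\mathscr N_{r/2}(0))|>u\big)\leq 2\e^{-u^2/(2\gamma^2 TV(0))},
$$
and the Borel--Cantelli / Doob's maximal inequality continuity argument of Lemma \ref{lemma-CC} upgrades this to $\log\mathscr Z_T(\mathscr N_{r/2}(0)) = \mathbf E[\log\mathscr Z_T(\mathscr N_{r/2}(0))] + o(T)$ almost surely.

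Combining all four ingredients with the variational formula \eqref{varfor} yields, almost surely,
$$
\liminf_{T\to\infty} T^{-1}\log\hP_{\gamma,T}[\mathscr N_r(0)]\ \geq\ -\lambda_1(r/2) - \tfrac{\gamma^2}{2}V(0) + \sup_{\vartheta\in\mathfrak m_\gamma}\mathcal E_{F_\gamma}(\vartheta),
$$
which, since $\sup_{\vartheta\in\mathfrak m_\gamma}\mathcal E_{F_\gamma}(\vartheta)\geq 0$ by Lemma \ref{lemma-Phi}, is already stronger than the claimed bound for any $\rho\in(0,\infty)$. The main anticipated obstacle is the continuity-in-$T$ step of the concentration argument, which requires controlling $\log\mathscr Z_T(\mathscr N_{r/2}(0))$ uniformly on dyadic blocks $[T_n,T_{n+1}]$ via Doob's maximal inequality; this follows the same pattern as in Lemma \ref{lemma-CC}, the one subtlety being to verify that the Brownian restriction to $\mathscr N_{r/2}(0)$ preserves the quadratic-variation bound of the relevant martingale, which reduces to a routine Cauchy--Schwarz computation on the restricted Brownian average.
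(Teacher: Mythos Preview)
Your proposal is correct and follows essentially the same route as the paper: Jensen's inequality on $\bE_0[\e^{\gamma\mathscr H_T}\mid A]$ to get $\E[\log\mathscr Z_T(A)]\geq \log\bP_0(A)-\tfrac{\gamma^2}{2}TV(0)$, the concentration argument of Lemma~\ref{lemma-CC} (with the Malliavin/quadratic-variation bound carrying over verbatim to the restricted partition function), and the variational formula \eqref{varfor} for $\mathscr Z_T$. The only substantive difference is that the paper invokes Lemma~\ref{lemma-lower-P0} (designed for general $\varphi$, whence the Cameron--Martin/subadditive constant $\rho$) to bound $\bP_0(\mathscr N_r(0))$, whereas you apply the spectral theorem directly at $\varphi=0$; this legitimately eliminates $\rho$ and yields the sharper bound $-\lambda_1(r/2)-\tfrac{\gamma^2}{2}V(0)+\sup_{\mathfrak m_\gamma}\mathcal E_{F_\gamma}$, which indeed dominates \eqref{esti-lower} since $\sup_{\mathfrak m_\gamma}\mathcal E_{F_\gamma}\geq 0$. (Incidentally, your shrinkage to $r/2$ is unnecessary here---working with $\mathscr N_r(0)$ itself gives the still better constant $\lambda_1(r)$.)
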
 
Together with the previous arguments, the above result will follow from  
\begin{lemma}\label{lemma-lower-P0}
	Fix any $d\in\N$ and any $r>0$. Then there is a constant $\rho\in (0,\infty)$ and a random variable $C(\varphi)$ for any $\varphi \in \mathscr C_T$ such that for $T$ sufficiently large, 
	$$
	 \bP_0\big\{\mathscr N_{r,T}(\varphi)\} \geq C(\varphi) \exp\big[-(\lambda_1\big(r/2\big)+\rho) T \big]
	$$
\end{lemma}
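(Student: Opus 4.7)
The plan is to establish this tube estimate via a time-discretization of $[0,T]$ adapted to the modulus of continuity of $\varphi$, combined with the spectral theory of the Dirichlet Laplacian on $B_{r/2}(0)$. Without loss of generality I assume $\varphi(0)=0$ (otherwise $|\varphi(0)|\geq r$ gives zero probability, and $|\varphi(0)|<r$ reduces to this after a translation absorbed into $C(\varphi)$). Using uniform continuity of $\varphi$ on $[0,T]$, pick a mesh $\tau=\tau(\varphi)\in(0,1]$ with $\mathrm{osc}(\varphi,\tau):=\sup_{|s-t|\leq \tau}|\varphi_s-\varphi_t|<r/4$; for the only application of this lemma in the proof of Proposition \ref{prop-esti-lower}, namely $\varphi\equiv 0$, $\tau$ is an arbitrary fixed positive constant and $\rho$ becomes universal in $(r,d)$. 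Setting $t_i=i\tau$, $N=\lfloor T/\tau\rfloor$, form the sub-tube along the skeleton
$$E=\bigcap_{i=0}^{N-1}\big\{|\omega_s-\varphi(t_i)|<r/2\text{ for all }s\in[t_i,t_{i+1}]\big\}.$$
On $E$, $|\omega_s-\varphi_s|\leq |\omega_s-\varphi(t_i)|+\mathrm{osc}(\varphi,\tau)<3r/4$, so $E\subset \mathscr N_{r,T}(\varphi)$, and the task reduces to lower-bounding $\bP_0(E)$.

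The Markov property at the times $t_i$, with $\xi_i:=\omega_{t_i}-\varphi(t_i)$, $\delta_i:=\varphi(t_{i+1})-\varphi(t_i)$ (so $|\delta_i|<r/4$), $\xi_0=0$, and $p^D_\tau(\cdot,\cdot;B_{r/2}(0))$ the Dirichlet heat kernel of $-\tfrac12\Delta$ on $B_{r/2}(0)$, then gives
$$\bP_0(E)=\int \prod_{i=0}^{N-1}p^D_\tau\big(\xi_i,\,\xi_{i+1}+\delta_i;\,B_{r/2}(0)\big)\,\d\xi_1\cdots \d\xi_N.$$
Restricting the integration to $\xi_i\in B_{r/8}(0)$ forces both arguments of every factor into the compact set $K=\overline{B_{3r/8}(0)}\subset B_{r/2}(0)$, on which the principal Dirichlet eigenfunction $\phi_1$ of $-\tfrac12\Delta$ on $B_{r/2}(0)$ is bounded below by some $c_0>0$. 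The spectral expansion of $p^D_\tau$ then yields, for $\tau\geq \tau_*(r)$ and all $x,y\in K$, the bound $p^D_\tau(x,y;B_{r/2}(0))\geq c\,e^{-\lambda_1(r/2)\tau}$ with $c=c(r,d)>0$. Multiplying over the $N$ factors and integrating produces $\bP_0(E)\geq (c\,|B_{r/8}(0)|)^N e^{-\lambda_1(r/2)T}\geq C(\varphi)\,e^{-(\lambda_1(r/2)+\rho)T}$ with $\rho:=\tau^{-1}\max\{0,\log(1/(c|B_{r/8}(0)|))\}$ and $C(\varphi)$ absorbing a finite endpoint overhead.

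The main obstacle is the tension between the two constraints on $\tau$: the oscillation control $\mathrm{osc}(\varphi,\tau)<r/4$ may force $\tau$ to be small for a general continuous $\varphi$, whereas the spectral lower bound requires $\tau\geq \tau_*(r)$. When both cannot be achieved simultaneously, one takes $\tau$ small and composes the $\tau$-step Dirichlet heat kernel via Chapman--Kolmogorov over blocks of length $\geq \tau_*(r)$ to recover the spectral asymptotic; alternatively, one applies the Cameron--Martin formula to a piecewise-linear smoothing $\bar\varphi$ of $\varphi$ and exploits the $\omega\mapsto-\omega$ symmetry of $\{\|\omega\|_{\infty,T}<r/2\}$ so that Jensen's inequality cancels the stochastic-integral term, reducing to $\bP_0(\mathscr N_{r,T}(\varphi))\geq \exp\!\big(-\tfrac12\int_0^T|\dot{\bar\varphi}|^2\,\d s\big)\,\bP_0(\mathscr N_{r/2,T}(0))$. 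In either variant the excess $\varphi$-dependent rate is absorbed into $\rho$ or $C(\varphi)$, and the conclusion follows from the standard spectral lower bound $\bP_0(\mathscr N_{r/2,T}(0))\geq c\,e^{-\lambda_1(r/2)T}$ valid for $T$ large.
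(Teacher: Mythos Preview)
Your time-discretization argument is correct and self-contained for any $\varphi$ whose modulus of continuity on $[0,T]$ is bounded uniformly in $T$ --- in particular for $\varphi\equiv 0$, which is the only case used in Proposition~\ref{prop-esti-lower}. In that regime your $\tau$ is a fixed constant, the Dirichlet heat-kernel lower bound on the compact $K\subset B_{r/2}(0)$ is legitimate, and the product over $N\sim T/\tau$ intervals gives the claimed rate with $\rho=\tau^{-1}\log(1/(c\,|B_{r/8}|))$.

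The gap is for general (in particular Brownian) $\varphi$: your $\tau$ and hence $\rho$ depend on $\varphi$'s modulus of continuity on all of $[0,T]$, and for a Brownian $\varphi$ one has $\mathrm{osc}(\varphi,\tau)\asymp\sqrt{\tau\log(T/\tau)}$, forcing $\tau\downarrow 0$ and $\rho\to\infty$ as $T\to\infty$. Neither of your two proposed fixes resolves this. Blocking via Chapman--Kolmogorov does not help, since the centers $\varphi(t_i)$ move between sub-steps and you are no longer iterating a single Dirichlet semigroup on $B_{r/2}(0)$. The piecewise-linear Cameron--Martin reduction gives $\exp(-\tfrac12\sum_i|\delta_i|^2/\tau)\,\bP_0(\mathscr N_{r/2,T}(0))$, but for Brownian $\varphi$ the energy $\sum_i|\delta_i|^2/\tau$ has the right linear mean $T/\tau$ only at \emph{fixed} $\tau$, whereas keeping $\|\varphi-\bar\varphi\|_\infty<r/2$ again forces $\tau=\tau(T)\downarrow 0$.

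The paper follows exactly your second route but closes this gap with a different idea. After the Cameron--Martin and Jensen-plus-symmetry step you wrote down, it sets
\[
B_{0,T}(\varphi)=\inf\Big\{\int_0^T|f'(s)|^2\,\d s:\ f\in H^1_T,\ f(0)=\varphi(0),\ f(T)=\varphi(T),\ \|f-\varphi\|_{\infty,T}\le r/2\Big\},
\]
taking the infimum over \emph{all} $H^1$ tracks, not just piecewise-linear ones. This functional is subadditive in $T$, so Kingman's subadditive ergodic theorem yields $T^{-1}B_{0,T}(\varphi)\to\rho$ for $\bP_0$-a.e.\ $\varphi$, with $\rho$ deterministic; finiteness of $\rho$ is checked by showing $\varphi\mapsto\sqrt{B_{0,1}(\varphi)}$ is $H^1$-Lipschitz and invoking Borell's Gaussian concentration. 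This is why the statement calls $C(\varphi)$ a \emph{random variable}: the bound is asserted for Brownian $\varphi$, and the universal $\rho$ comes from ergodicity rather than from a mesh parameter. Your argument buys elementarity and suffices for the application; the paper's buys a $\varphi$-independent rate valid $\bP_0$-a.s.
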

\begin{proof}
	First we write $H_T^1=\{f:f(0)=0,\int_0^T| f^\prime (s)|^2\d s<\infty\}$. For any $f \in H^1_T$, by the Cameron-Martin theorem we have 
	\begin{align*}
	\begin{split}
	\bP_0(\omega\in \mathscr N_{r/2}(f))
	&=\int\e^{\int_0^T{f^\prime}(s)\d \omega(s)-\frac{1}{2}\int_0^T|\dot{f}(s)|^2\d s}\1{\{\omega\in \mathscr N_{r/2}(0)\}}\d\bP_0(\omega)\\
	&=\e^{-\frac{1}{2}\int_0^T|{f^\prime}(s)|^2\d s}\bP_0(\omega\in \mathscr N_{r/2}(0))\bE_0\Big[\e^{\int_0^T\dot{f}(s)\d \omega(s)}\Big|\{\omega\in \mathscr N_{r/2}(0)\}\Big].
	\end{split}
	\end{align*}
	By Jensen's inequality to the above expectation and also invariance of the set $\omega\in \mathscr N_{r/2}(0)$ with respect to the map $\omega\mapsto -\omega$, we then have 	
	\begin{equation}\label{4.3}
	\bP_0(\omega\in \mathscr N_{r/2}(f)) > 	\e^{-\frac{1}{2}\int_0^T|{f^\prime }(s)|^2\d s} \,\, \bP_0(\omega\in \mathscr N_{r/2}(0)).
	\end{equation}	
	We will now handle both the terms  on the right-hand side above. Again by the spectral theorem, 
	$
	\lim_{T\rightarrow\infty}\frac{1}{T}\log\bP_0(\omega\in \mathscr N_{r/2}(0))=-\lambda_1 <0.
	$
	For any given $f\in H^1_T$, let us now handle the term $\e^{-\frac{1}{2}\int_0^T|{f^\prime}(s)|^2\d s}$ in \eqref{4.3}. For any $s <t$, with $s,t \in [0,T]$, let us define the functional 
	$$
	\begin{aligned}
	&B_{s,t}: \mathscr C_T \to \R_+, \qquad \\
	&B_{s,t}(\varphi)=\inf \bigg\{ \int_s^t|{f^\prime}(u)|^2\d u : f \in H^1_T, \,\, f(s)=\varphi(s),f(t)=\varphi(t),\sup_{u\in[s,t]}|\varphi(u)-f(u)|\leq r/2\bigg\}.
	\end{aligned}
	$$
	First remark that, for any $s < u < t $, we have 
	$$
	B_{s,t}\leq B_{s,u}+B_{u,t}.
	$$
	 That is, the map $t\mapsto B_{0,t}$ is sub-additive, and therefore by Kingman's subadditive ergodic theorem we have 
	\begin{align}
	\label{rho}
	\lim_{t\rightarrow\infty}t^{-1}B_{0,t}(\cdot)=\rho, \qquad\text{a.s.-} \bP_0
	\end{align}
	and the almost sure limit $\rho$ is deterministic. We need to show that $\rho$ is finite for which we first note that by Fatou's lemma, \eqref{rho} implies that $\rho \leq \bE_0(B_{0,1})$.  If we write $B:= B_{0,1}$, then for any $\varphi \in \mathscr C_T$ and for every fixed $f \in H^1_T$, by change of variables and using the linearity of the relation in the infimum defining $B(\cdot)$, 
	we have 
	$$
	B(\varphi+ f)= B(\varphi)+ \int_0^1 |{f^\prime}(u)|^2 \d u,
	$$
	 and therefore  
	$$\sqrt{B(\varphi+ f)}- \sqrt{B(\varphi)} \leq [\int_0^1 |{f^\prime}(u)|^2 \d u]^{1/2},
	$$ 
	meaning that  the map $\varphi \mapsto \sqrt{B(\varphi)}$ is Lipschitz. Hence, 
	by Borell's inequality, $\sqrt{B}-\mathrm{median}(\sqrt B)$ possesses Gaussian tails, implying in particular that $\bE_0((\sqrt{B})^2)=\bE_0 (B)<\infty$. As already remarked,  we have $\rho \leq \bE_0(B)$, so that together with the last upper bound we have finiteness of  the limit $\rho$ in \eqref{rho}.

Finally, note that for any fixed $T>0$ and $\varphi \in \mathscr C_T$, by lower-semicontinuity of the norm $H^1_T\ni f \mapsto (\int_0^T | f^\prime(u)|^2 \d u)^{1/2}$, there exists a (minimizing) function $f^{\ssup T}=f^{\ssup T}(\varphi)$ such that 
	$$f^{\ssup T}(0)=\varphi(0), \quad f^{\ssup T}(T)=\varphi(T), \quad B_{0,T}=\int_0^T|{f^\prime}^{\ssup T} (s)|^2\d s.
	$$ 
	Then by \eqref{4.3},
	$$
	\bP_0^\otimes(\omega\in \mathscr N_{r}(\varphi)|\varphi)\geq \bP_0^\otimes(\omega\in\mathscr N_{r/2}(f^{\ssup T})|\varphi)\geq \e^{-\frac{1}{2}B_{0,T}}\bP_0(\omega\in\mathscr N_{r/2}(0)) 
	$$
	and by \eqref{rho},
	$$
	\liminf_{T\rightarrow\infty}\frac{1}{T}\log \bP_0^\otimes(\omega\in\mathscr N_r(\varphi)|\varphi)\geq -\frac{\rho }{2}+\lim_{T\rightarrow\infty}\frac{1}{T}\log\bP_0(\omega\in\mathscr N_{r/2}(0)).
	$$
	Combining the finiteness of $\rho$ together with \eqref{spectral} now proves the desired lower bound.
	\end{proof}

\begin{remark}
A uniform exponential lower bound in Proposition \ref{prop-esti-lower} should follow from a uniform lower bound on the Wiener probability in Lemma \ref{lemma-lower-P0} (with a pre-factor $C$ which is independent of $\varphi$ on the right hand side of the bound there). 
\end{remark}

\begin{proof}[Proof of Proposition \ref{prop-esti-lower}:]
We choose $T>0$ sufficiently large so that Lemma \ref{lemma-lower-P0} holds. Let $A_T\subset \mathscr C_T$ be any set with $\bP_0(A_T)>0$. If we write $Z_T(A_T)=\bE_0[\e^{\gamma \mathscr H_T} \1_{A_T}]$, then the same arguments as in \eqref{varfor} imply that, $\P$-a.s., 
\begin{align*}
\liminf_{T\to\infty}\frac{1}{T} \log Z_T(A_T)=\liminf_{T\to\infty}\frac{1}{T}\E[ \log Z_T(A_T)].
\end{align*} 
We choose $A_T=\mathscr N_{r,T}(0)$ (which has strictly positive probability under $\bP_0$ by Lemma \ref{lemma-lower-P0}). Next, using Jensen's inequality, we note that
\begin{align*}
\E[ \log Z_T(A_T)]=\E\Big[\log\big(\bP_0(A_T)\,\,\bE_0\big[\e^{\gamma\mathscr H_T}\big| A_T\big]\big) \Big]
= \log \bP_0(A_T) +\E\Big[\log\bE_0\big[\e^{\gamma\mathscr H_T}\big| A_T\big] \Big]\\
\geq \log \bP_0(A_T) +\gamma\,\,\bE_0\big[\E[\mathscr H_T] |A_T\big]
=\log \bP_0(A_T),
\end{align*}
where for the last identity we used that the martingale $\mathscr H_T(\cdot)$ has mean $0$ under $\P$. Combining the last two displays, followed by Lemma \ref{lemma-lower-P0}, it holds
$$
\liminf_{T\to\infty}\frac{1}{T}\log \bE_0\Big[\1_{\mathscr N_r(\varphi)}\e^{\gamma\mathscr H_T-\frac{\gamma^2}{2}TV(0)}\Big]\geq \liminf_{T\to\infty}\frac{1}{T}\log \bP_0(A_T)-\frac{\gamma^2}{2}V(0)\geq -\lambda_1\big(r/2\big)-\rho-\frac{\gamma^2}{2}V(0)
$$
with $\rho=\rho(r)\in(0,\infty)$ defined in \eqref{rho}. Hence, $\P$-almost surely, 
 $$
\liminf_{T\to\infty}  \frac{1}{T}\log \hP_{\gamma,T}\big[\mathscr N_r(0)\big]\geq -\bigg(\lambda_1\big(r/2\big)+\rho+\frac{\gamma^2}{2}V(0)-\liminf_{T\to\infty}\frac 1T\log \mathscr Z_T\bigg),
$$
which proves the required lower bound. 
\end{proof}


We now owe the reader the

\begin{proof}[{\bf Proof of Proposition \ref{prop3}.}]

(i) Since $\sup_{\vartheta\in{\mathfrak m_\gamma}} {\mathcal E}_{F_\gamma}(\vartheta) \geq 0$ it follows that $\lambda(\gamma) \leq \frac{\gamma^2} 2 V(0)$ for all $\gamma\geq 0$. 
The fact that $\gamma \mapsto -\lim_{T\to\infty} \frac 1T\E[\log \mathscr Z_{\gamma,T}]$ is non-decreasing in $[0,\infty]$ and is continuous in $(0,\infty)$ can be shown 
following the arguments of \cite{CY06} for discrete directed polymers. 
It follows that $\lambda(\gamma)=\frac{\gamma^2}2 V(0)$ if $\gamma\leq\gamma_1$ and $\lambda(\gamma)<\frac{\gamma^2}2 V(0)$ if $\gamma >\gamma_1$. 	

(ii) Let us now assume that $\lambda(\gamma)$ is differentiable at $\gamma$, and note that $V=\kappa\star\kappa$ satisfies $0\leq V(\cdot)\leq V(0)$. We will prove the identity 
\begin{align}
\label{gamma integration}
\frac{1}{T}\E[\log Z_{\gamma,T}]=\int_0^\gamma r\bigg(V(0)-\frac{1}{T}\int_0^T\E\bigg[\widehat{\bE}_{r,T}^{\otimes}[V(\omega_t-\omega_t^\prime)]\bigg]\d t\bigg)\d r
\end{align}	
below. Assuming this, the statement $0\leq \lambda^\prime(\gamma)\leq \gamma V(0)$ follows from the fact that the integrand in \eqref{gamma integration} is bounded by 0 from below and by $r V(0)$ from above. 

We now prove \eqref{gamma integration} using tools from Malliavin calculus and Gaussian integration by parts introduced in the last section. First differentiating $\log Z_{\gamma,T}$ w.r.t. $\gamma$ yields
\begin{align*}
\frac{\partial}{\partial\gamma}\E\big[\log Z_{\gamma,T}\big]=\E\Big[\widehat{\bE}_{\gamma,T}[\mathscr H_T(\omega)]\Big]
=\bE_0\bigg[\E\bigg[\int_0^T\int_{\R^d}\kappa(x-\omega_t)\frac{\e^{\gamma\mathscr H_T(\omega)}}{Z_{\gamma,T}}{\dot B}(t,x)\d x\d t\bigg]\bigg].
\end{align*}
From Gaussian integration by parts (cf. \eqref{byparts}) we obtain
\begin{align*}
\bE_0\bigg[\E\bigg[\frac{\e^{\gamma\mathscr H_T(\omega)}}{Z_{\gamma,T}}\int_0^T\int_{\R^d}\kappa(x-\omega_t){\dot B}(t,x)\d x\d t\bigg]\bigg]
=\bE_0\bigg[\E\bigg[\int_0^T\int_{\R^d}\kappa(x-\omega_t)D_{t,x}\frac{\e^{\gamma\mathscr H_T(\omega)}}{Z_{\gamma,T}}\d x\d t\bigg]\bigg].
\end{align*}
We now compute the Malliavin derivative on the right-hand side above:
\begin{align*}
D_{t,x}\frac{\e^{\gamma\mathscr H_T(\omega)}}{Z_{\gamma,T}}=\gamma\kappa(x-\omega_t)\frac{\e^{\gamma\mathscr H_T(\omega)}}{Z_{\gamma,T}}-\gamma\frac{\e^{\gamma\mathscr H_T(\omega)}}{Z_{\gamma,T}^2}\bE_0\Big[\kappa(x-\omega_t^\prime)\e^{\gamma\mathscr H_T(\omega')}\Big]
\end{align*}
Here $\omega,\omega'$ denote two independent Brownian motions. Thus,
\begin{align*}
&\frac{\partial}{\partial\gamma}\E\big[\log Z_{\gamma,T}\big]
=\bE_0\bigg[\E\bigg[\int_0^T\int_{\R^d}\kappa(x-\omega_t)D_{t,x}\frac{\e^{\gamma\mathscr H_T(\omega)}}{Z_{\gamma,T}}\d x\d t\bigg]\bigg]\\
&=\gamma\E\bigg[\int_0^T\int_{\R^d}\bE_0\bigg[\kappa^2(x-\omega_t)\frac{\e^{\gamma\mathscr H_T(\omega)}}{Z_{\gamma,T}}\bigg]-\bE_0^\otimes\bigg[\kappa(x-\omega_t)\kappa(x-\omega_t^\prime)\frac{\e^{\gamma(\mathscr H_T(\omega)+\mathscr H_T(\omega'))}}{Z_{\gamma,T}^2}\bigg]\d x\d t\bigg]\\
&=\gamma T\bigg(V(0)-\frac{1}{T}\int_0^T\E\Big[\widehat{\bE}_{\gamma,T}^\otimes[V(\omega_t-\omega_t^\prime)]\Big]\d t\bigg)
\end{align*}
and the identity \eqref{gamma integration} follows. 

We now prove \eqref{1st} and \eqref{Lemma 3.11}. The first claim \eqref{1st} is an immediate consequence of the convexity of $\gamma\mapsto \log Z_{\gamma,T}$, which follows readily from H\"older's inequality. 
Then the second claim \eqref{Lemma 3.11} can be deduced further from \eqref{1st} using an idea from \cite{P10} as follows. Recall \eqref{fprime} and note that for $\gamma_0>0$,
\begin{align*}
\widehat{\bE}_{\gamma_0,T}\bigg[\bigg|\frac{\mathscr H_T(\omega)}{T}-f_T^\prime(\gamma_0)\bigg|\bigg]=\widehat{\bE}_{\gamma_0,T}\bigg[\bigg|\frac{\mathscr H_T(\omega)}{T}-\frac{\widehat{\bE}_{\gamma_0,T}[\mathscr H_T(\omega)]}{T}\bigg|\bigg]\leq \frac{1}{T}\widehat{\bE}_{\gamma_0,T}^\otimes\big|\mathscr H_T(\omega)-\mathscr H_T(\omega')\big|.
\end{align*}
Next we can rewrite, using the fundamental theorem of calculus, that for any $\gamma_0 < \gamma$, 
$$
\begin{aligned}
&\int_{\gamma_0}^\gamma \widehat\bE^\otimes_{r,T}\big[| \mathscr H_T(\omega)- \mathscr H_T(\omega^\prime)|\big] \d r \\
&= (\gamma- \gamma_0) \widehat\bE^\otimes_{\gamma_0,T}\big[| \mathscr H_T(\omega)- \mathscr H_T(\omega^\prime)|\big]+ \int_{\gamma_0}^\gamma \d r \int_{\gamma_0}^r \d \theta \frac{\partial}{\partial \theta} \widehat\bE^\otimes_{\theta,T}\big[| \mathscr H_T(\omega)- \mathscr H_T(\omega^\prime)|\big] 	
\end{aligned}
$$
and by Cauchy-Schwarz inequality, combined with the bound $(a+b)^2 \leq 2a^2+ 2b^2$, we have 
$$
\frac{\partial}{\partial \theta} \widehat\bE^\otimes_{\theta,T}\big[| \mathscr H_T(\omega)- \mathscr H_T(\omega^\prime)|\big] \leq 4 \mathrm{Var}_{\hP_{\theta,T}}\big[\mathscr H_T(\cdot)\big].	
$$
Combining the last two displays we have, for $\gamma>\gamma_0$,
\begin{align*}
\widehat{\bE}_{\gamma_0,T}^\otimes&\big|\mathscr H_T(\omega)-\mathscr H_T(\omega')\big|\\
&\leq \frac{2}{\gamma-\gamma_0}\int_{\gamma_0}^{\gamma}\widehat{\bE}_{\theta,T}\Big|\mathscr H_T(\omega)-\widehat{\bE}_{\theta,T}[\mathscr H_T(\omega)]\Big|\d \theta +4\int_{\gamma_0}^{\gamma}\mathrm{Var}_{\hP_{\theta,T}}\big[\mathscr H_T(\cdot)\big]\d \theta.
\end{align*}
By Jensen's inequality,
\begin{align*}
\bigg(\frac{1}{\gamma-\gamma_0}\int_{\gamma_0}^{\gamma}\widehat{\bE}_{\theta,T}\Big|\mathscr H_T(\omega)-\widehat{\bE}_{\theta,T}[\mathscr H_T(\omega)]\Big|\d \theta\bigg)^2
\leq \frac{1}{\gamma-\gamma_0}\int_{\gamma_0}^{\gamma}\mathrm{Var}_{\hP_{\theta,T}}\big[\mathscr H_T(\cdot)\big]\d \theta.
\end{align*}
Combining the last estimates, we have 
\begin{equation}\label{Panchenko2}
\begin{aligned}
\widehat{\bE}_{\gamma_0,T}\bigg|\frac{\mathscr H_T(\omega)}{T}-f_T^\prime(\gamma_0)\bigg|\leq 2\sqrt{\frac{\Psi_T(\gamma)}{T(\gamma-\gamma_0)}}+4\Psi_T(\gamma),\qquad\text{where}\\
\Psi_T(\gamma)=\frac{1}{T}\int_{\gamma_0}^{\gamma}\mathrm{Var}_{\hP_{\theta,T}}\big[\mathscr H_T(\cdot)\big]\d \theta=\int_{\gamma_0}^{\gamma}f_T^{\prime\prime}(\theta)\d \theta=f_T^\prime(\gamma)-f_T^\prime(\gamma_0)
\end{aligned}
\end{equation}
and in the second identity in the above display we used that $f_T^{\prime\prime}(\cdot)= \frac 1 T \mathrm{Var}_{\hP_{\cdot,T}}[\mathscr H_T]$. It remains to show that the upper bound in \eqref{Panchenko2} vanishes in the limit $T\to \infty$. 
Recall that we assumed differentiability of $\gamma_0\mapsto \lambda(\gamma_0)$, and together with convexity of  $f_T(\cdot)$, we have that for any given $\eps>0$,  we may choose $\gamma$ sufficiently close to $\gamma_0$ such that $\limsup_{T\rightarrow\infty}\Psi_T(\gamma)\leq \eps$ a.s. and in $L^1$. Hence,
$$
\lim_{T\rightarrow \infty}\widehat{\bE}_{\gamma_0,T}\bigg|\frac{\mathscr H_T(\omega)}{T}-f_T^\prime(\gamma_0)\bigg|=0\quad\text{a.s. and in }L^1
$$ 
and \eqref{Lemma 3.11} follows from \eqref{1st}.

(iii)  Recall (cf. \eqref{fprime}) that  we always have the identity 
$
\widehat{\bE}_{\gamma,T}[\mathscr H_T(\omega)]=\frac{\partial}{\partial \gamma}\log Z_{\gamma,T}.
$
The goal is to prove that if $d\geq 3$ then $\gamma_1=\inf\{\gamma\colon \sup_{{\mathfrak m_\gamma}}{\mathcal E}_{F_\gamma}>0\}>0$. Then, by part (i), for any $\gamma\in[0,\gamma_1]$ it holds
$
\lambda(\gamma)=\frac{\gamma^2}{2}V(0).
$
Then $\lambda(\gamma)$ is differentiable for any $\gamma\in[0,\gamma_1)$ and 
$
\lim_{T\rightarrow\infty}\frac{1}{T}\widehat{\bE}_{\gamma,T}[\mathscr H_T(\omega)]=\frac{\partial}{\partial \gamma}\big(\lim_{T\rightarrow\infty}\frac{1}{T}\log Z_{\gamma,T}\big)=\gamma V(0). 
$
It remains to show that $\gamma_1>0$ for $d\geq 3$. It is known that (\cite{MSZ16}) in dimension $d\geq 3$, there exists $\gamma_0(d)> 0$, such that $\mathscr Z_{\gamma,T}$ converges almost surely to zero if $\gamma>\gamma_0$ and to a non-degenerate, strictly positive random variable if $\gamma<\gamma_0$. Furthermore, in any dimension $d\geq 1$,
the event $\mathcal{V}:=\{\mathscr Z_{\gamma,T}\not\to_{T\to \infty} 0\}$  is a tail event for the process
$t\to {\dot B}(t,\cdot)$, and therefore 
$\P(\mathcal{V})\in \{0,1\}$. 
So if $\lim_{T\rightarrow\infty}\mathscr Z_{\gamma,T}>0$ almost surely, since for $x>0$, $-\log(x)<\infty$,
$\lim_{T\rightarrow\infty}\frac{1}{T}\E[-\log\mathscr Z_{\gamma,T}]\leq 0,$
That is, $\gamma_1\geq \gamma_0$, and thus $\gamma_1>0$.
\end{proof}

\subsection{\bf Proof of Corollary \ref{Theorem 3}.}\label{proof thm 3}
Recall that by Feynman-Kac formula, the solution to \eqref{she} is given by
\begin{equation}\label{SHE}
u_\eps(t,x)= \bE_x\bigg[\exp\bigg\{ \gamma(\eps,d) \int_0^{t} \int_{\R^d} \kappa_\eps\big(\omega_{s}- y\big) \, {\dot B}(t- s, \d y) \d s - \frac {\gamma(\eps,d)^2 t } 2  (\kappa_\eps\star\kappa_\eps)(0)\bigg\}\bigg].
\end{equation}
If 
\begin{equation}\label{tildeH}
\widetilde H_{\eps,t}(\omega)= \int_0^t \int_{\R^d} \kappa_\eps(\omega_s- y) {\dot B}(t-s, \d y) \ d s,
\end{equation}
the scaling property of the noise ${\dot B}$ implies that ${\dot B}(s,\ d y)\d s$ has the same law as that of $\eps^{\frac d2+1} {\dot B}(\eps^{-2} s, d(\eps^{-1} y)) \d(\eps^{-2} s)$, so that (using $\kappa_\eps(\cdot)=\eps^{-d} \kappa(\cdot/\eps)$ and changing variables $\eps^{-2}s\mapsto s$ and $\eps^{-1} y \mapsto y$), we obtain from \eqref{tildeH} that 
\begin{equation}\label{relation}
\widetilde H_{\eps,t}(\omega) \stackrel{\mathrm{(d)}}=\eps^{-\frac{d-2}{2}} \int_0^{t/\eps^2} \int_{\R^d}  \kappa\big(\eps^{-1} \omega_{\eps^2 s}- y\big) \, {\dot B}(t\eps^{-2} - s, \d y) \d s.
\end{equation}
Using now Brownian scaling, we have 
$$
\begin{aligned}
&\bE_{\frac x\eps}\bigg[\exp\bigg\{ \gamma\int_0^{t/\eps^2} \int_{\R^d} \kappa\big(\eps^{-1} \omega_{\eps^2 s}- y\big) \, {\dot B}(t\eps^{-2}- s, \d y) \d s - \frac {\gamma^2t} {2\eps^2} (\kappa\star\kappa)(0)\bigg\}\bigg] 
\stackrel{\mathrm{(d)}}= u_{\eps}(t,x).
\end{aligned}
$$
Now combining with the invariance of ${\dot B}$ w.r.t. time reversal, we have the following distributional identity of the processes:
\begin{equation}\label{dist-identity}
\{u_{\eps}(t,x)\}_{x\in \R^d} \stackrel{\ssup{\mathrm d}}= \bigg\{\mathscr Z_{\gamma,\frac t{\eps^2}}\Big(\frac x \eps\Big)\bigg\}_{x\in \R^d}, 
\end{equation}
with $\mathscr Z_{\gamma,T}$ defined in \eqref{MT}. The above distributional identity now allows us to use Theorem \ref{Theorem 2 new} to deduce Corollary \ref{Theorem 3}. 
Indeed, given a fixed $t>0$, we note that Theorem \ref{Theorem 2 new} implies
\begin{align*}
\limsup_{T\to\infty}\sup_{\varphi\in\mathscr C_{tT}}\frac{1}{tT}\log \hP_{\gamma,tT}\big[\mathscr N_{r,tT}(\varphi)\big]\leq -\Theta.
\end{align*}
%
Now Corollary \ref{Theorem 3} is a direct consequence of the above remarks once we set $\eps=T^{-1/2}$ and use that
\begin{align}\label{Brownian scaling}
\bP_0^\otimes\bigg\{\sup_{0\leq s\leq t}|\omega_s-\varphi_s|\leq r\eps\bigg\}=\bP_0^\otimes\bigg\{\sup_{0\leq s\leq tT}|\omega_s-\varphi_s|\leq r\bigg\}.
\end{align}

\subsection{Proof of Theorem \ref{localization via overlaps}.}

We will conclude the proof of Theorem \ref{localization via overlaps} in this section. Let us first introduce some notation. First recall that $\widehat\bE_T^\otimes$ denotes expectation w.r.t. the product GMC measure ${\hP}_T^\otimes$ defined in \eqref{Mprod}. Then we set 
\begin{align}
\label{definition of the set B delta}
B_\delta=\bigg\{\frac{1}{T}\int_0^T\bET^\otimes[V(\omega_s-\omega_s^\prime)]\d s\leq\delta\bigg\}.
\end{align}
Also, with $\eta_r(s,x)=\e^{-r} \eta(s(\e^{2r}-1)^{-1},x)$ if $r>0$ and $\eta_0=0$, we set 
\begin{equation}\label{Phidef}
\Phi_T(\omega,\omega^\prime, \eta_r)= \frac 1 {\widehat Z_T(\eta_r)^2} \exp\big\{\gamma\mathscr H_T(\omega,\eta_r)+ \gamma \mathscr H_T(\omega^\prime,\eta_r)\big\}
\end{equation} 
and $\widehat Z_T(\eta_r)= \bET[\e^{\gamma \mathscr H_T(\omega,\eta_r)}]$ is the normalization constant so that 
$$
\bET^\otimes\big[\Phi_T(\cdot,\cdot,\eta_r)\big]=1. 
$$

One important step for the proof of Theorem \ref{localization via overlaps} is determined by the following result. Let 
\begin{equation}\label{Idef}
I_{T,t}=\frac 1{tT} \int_0^t \d r \int_0^T \d s \bET^\otimes\big[V(\omega_s-\omega^\prime_s) \Phi_T(\omega,\omega^\prime,\eta_r)\big]
\end{equation}
so that the process $(I_{T,t})_{t\geq 0}$ is adapted to the filtration $(\mathcal H_t)_{t\geq 0}$ with $\mathcal H_t$ being the $\sigma$-algebra generated by ${\dot B}$ and $\eta$ up to time $t$.
\begin{prop}
	\label{Proposition 4.2}
	With the assumption imposed in Theorem \ref{localization via overlaps}, we have the following assertions. 
	\begin{itemize}
		\item[(a)] For any $t,\eps >0$,
		$$\lim_{T\rightarrow\infty}\P\bigg(\bigg|I_{T,t/T}-\frac{1}{t/T}\int_0^{t/T}\frac{1}{T}\int_0^T\bET^{\ssup{\Br}^\otimes}[V(\omega_s-\omega_s^\prime)]\d s\d r\bigg|>\eps\bigg)=0.$$
		\item[(b)] For any $T,\eps_1,\eps_2>0$, there exists $\delta^\prime=\delta^\prime(\gamma,t,\eps_1,\eps_2)>0$ sufficiently small that
		$$
		\P\bigg(\Big|I_{T,t/T}-\frac{1}{T}\int_0^T\bET^\otimes[V(\omega_s-\omega_s^\prime)]\d s\Big|\geq \eps_1\bigg|B_\delta\bigg)\leq \eps_2
		$$
		for all $0<\delta\leq\delta^\prime$ and $T\geq 0$.
	\end{itemize}
\end{prop}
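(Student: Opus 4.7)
The plan rests on the observation that the weight $\Phi_T(\cdot,\cdot,\eta_r)$ is precisely the likelihood ratio of the product polymer measure with noise $\dot B+\eta_r$ against that with noise $\dot B$. Indeed, since $\widehat Z_T(\eta_r)=\bET[e^{\gamma\mathscr H_T(\omega,\eta_r)}]=Z_T(\dot B+\eta_r)/Z_T$, a direct unwinding of definitions yields the clean identity
\[
\bET^\otimes\bigl[V(\omega_s-\omega'_s)\,\Phi_T(\omega,\omega',\eta_r)\bigr] \;=\; \widehat{\mathbb E}_T^{\ssup{\dot B+\eta_r}^\otimes}\bigl[V(\omega_s-\omega'_s)\bigr].
\]
Combined with the flow identity $\Br=e^{-r}\dot B+\eta_r$ giving $\bET^{\ssup{\Br}^\otimes}[V]=\widehat{\mathbb E}_T^{\ssup{e^{-r}\dot B+\eta_r}^\otimes}[V]$, both parts (a) and (b) reduce to comparing polymer expectations built from the three noises $\dot B$, $\dot B+\eta_r$, and $e^{-r}\dot B+\eta_r$.

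For part (a), the Hamiltonians of $\widehat{\mathbb E}_T^{\ssup{\dot B+\eta_r}^\otimes}$ and $\bET^{\ssup{\Br}^\otimes}$ differ by exactly $\gamma(1-e^{-r})\mathscr H_T(\omega,\dot B)$, so the Radon-Nikodym derivative between the two polymer measures equals $\exp\{\gamma(1-e^{-r})(\mathscr H_T(\omega)+\mathscr H_T(\omega'))\}$ up to a normalizing constant. Since $|1-e^{-r}|\le r\le t/T$ and $\mathscr H_T(\omega,\dot B)$ is Gaussian with variance $TV(0)$, this perturbation is typically of size $O(t/\sqrt T)$ and vanishes as $T\to\infty$. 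A Cauchy-Schwarz argument together with Gaussian moment bounds then yields
\[
\bigl|\widehat{\mathbb E}_T^{\ssup{\dot B+\eta_r}^\otimes}[V] - \bET^{\ssup{\Br}^\otimes}[V]\bigr| \;\le\; C(\gamma)\,V(0)\cdot O(T^{-1/2})
\]
uniformly in $r\in[0,t/T]$, in $\P$-probability. Averaging over $(s,r)$ and applying Chebyshev's inequality concludes part (a).

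For part (b), Cauchy-Schwarz on $\bET^\otimes$ and the pointwise bound $V(\cdot)\le V(0)$ give $\bET^\otimes[V\Phi_T]^2\le V(0)\,\bET^\otimes[V]\,\bET^\otimes[\Phi_T^2]$, and a second Cauchy-Schwarz in the $(s,r)$-integration yields
\[
I_{T,t/T}^2 \;\le\; V(0)\cdot Q_T\cdot K_T, \qquad K_T:=\frac{1}{t/T}\int_0^{t/T}\frac1T\int_0^T \bET^\otimes\bigl[\Phi_T(\cdot,\cdot,\eta_r)^2\bigr]\,\d s\,\d r,
\]
where $Q_T:=\frac1T\int_0^T \bET^\otimes[V]\,\d s\le\delta$ on $B_\delta$. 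It then suffices to show $K_T=O(1)$ with conditional probability at least $1-\eps_2$. Since $B_\delta$ is $\dot B$-measurable and $\eta$ is independent of $\dot B$, one first takes the annealed expectation $\mathbf E_\eta[\Phi_T^2]$; using the explicit rewriting $\bET^\otimes[\Phi_T^2]=Z_T^2\,Z_T(\dot B+2\eta_r)^2/Z_T(\dot B+\eta_r)^4$ together with the Gaussian structure of $\eta_r$ (whose variance $1-e^{-2r}\le 2t/T$ is small), one derives a uniform bound $\mathbf E[K_T]\le M(\gamma,t,V(0))$ independent of $T$. Markov's inequality conditional on $B_\delta$ then gives $K_T\le M/\eps_2$ with probability at least $1-\eps_2$, whence $I_{T,t/T}\le\sqrt{V(0)M\delta/\eps_2}$ and $|I_{T,t/T}-Q_T|\le I_{T,t/T}+\delta<\eps_1$ for $\delta\le\delta^\prime(\gamma,t,\eps_1,\eps_2)$ sufficiently small.

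The main technical obstacle is the uniform-in-$T$ bound on $\mathbf E[\bET^\otimes[\Phi_T^2]]$, since this quantity equals $e^{2\log M_r(2\gamma)-4\log M_r(\gamma)}$ with $M_r(\theta):=\bET[e^{\theta\mathscr H_T(\omega,\eta_r)}]$, involving the random partition function $\widehat Z_T(\eta_r)$ through negative powers. In a hypothetical Gaussian approximation of $\mathscr H_T(\omega,\eta_r)$ under the polymer measure, with variance $\sigma^2\le TV(0)(1-e^{-2r})\le 2V(0)t$, this ratio would equal $e^{2\gamma^2\sigma^2}=e^{O(t)}$ and the required $T$-independent bound would follow; quantifying this Gaussian behavior in the small-variance regime $r\le t/T$ via careful moment estimates is the core technical step.
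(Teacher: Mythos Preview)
Your identification of $\bET^\otimes[V\Phi_T(\cdot,\cdot,\eta_r)]=\widehat{\mathbb E}_T^{\ssup{\dot B+\eta_r}^\otimes}[V]$ and $\bET^{\ssup{\Br}^\otimes}[V]=\widehat{\mathbb E}_T^{\ssup{e^{-r}\dot B+\eta_r}^\otimes}[V]$ is correct and is also the starting point of the paper. But the argument you give for part (a) has a genuine gap.

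You assert that the perturbation $\gamma(1-e^{-r})\mathscr H_T(\omega,\dot B)$ is ``typically of size $O(t/\sqrt T)$'' because $\mathscr H_T$ is Gaussian with variance $TV(0)$. That heuristic applies under $\P$, but the expectations you are comparing are \emph{polymer} expectations. Under $\hP_T$, the variable $\mathscr H_T(\omega,\dot B)/T$ concentrates at $\lambda'(\gamma)>0$ (this is exactly \eqref{Lemma 3.11}), so the exponent $\gamma(1-e^{-r})\mathscr H_T$ is of order $\gamma t\lambda'(\gamma)=O(1)$, not $o(1)$. Consequently the Radon--Nikodym derivative between the two polymer measures is not close to $1$; it is close to a \emph{constant} $e^{\gamma(1-e^{-r})T\lambda'(\gamma)}$, which only cancels after normalization. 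The paper makes this cancellation explicit by introducing the centered quantities $(Y_r')^{1/2}=\bET[e^{\gamma\mathscr H_T(\omega,\eta_r)+T\gamma(e^{-r}-1)\lambda'(\gamma)}]$ and the analogous $X'_{s,r}$, chosen so that $X'_{s,r}/Y'_r=\bET^\otimes[V\Phi_T]$ exactly, and then bounds $\E_\eta[(Y_r-Y_r')^2]$ and $\E_\eta[(X_{s,r}-X'_{s,r})^2]$ by $c(\gamma,t)\big(C(\delta)\,\bET[|\lambda'(\gamma)-\mathscr H_T/T|]+\delta Z_T^{-12t/T}\big)$. This is where the differentiability hypothesis on $\lambda$ is actually used, via \eqref{Lemma 3.11}; your sketch never invokes it. A second omission in part (a) is the control of negative moments: comparing the ratios $X_{s,r}/Y_r$ and $X'_{s,r}/Y'_r$ requires a uniform bound $\sup_T\E[Y_r^{-2}]\le c(\gamma,t)$, which is a nontrivial lemma in the paper and which your ``Cauchy--Schwarz plus Gaussian moment bounds'' does not supply.

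For part (b), your Cauchy--Schwarz route $I_{T,t/T}^2\le V(0)\,Q_T\,K_T$ is logically sound but, as you acknowledge, leaves the uniform bound on $\mathbf E[K_T]=\mathbf E[\bET^\otimes[\Phi_T^2]]$ unproved; this again involves negative powers $\widehat Z_T(\eta_r)^{-4}$ and is not a Gaussian moment computation. The paper avoids this entirely: it takes $\E_\eta$ first (legitimate since $B_\delta$ is $\dot B$-measurable and $\eta\perp\dot B$), writes $\bET^\otimes[V\Phi_T]=X''_{s,r}/Y''_r$, and shows directly that $\E_\eta|X''_{s,r}-\bET^\otimes[V]|$ and $\E_\eta|Y''_r-1|$ are each bounded by $c(\gamma,t)\,\bET^\otimes\big[\tfrac1T\int_0^T V(\omega_s-\omega'_s)\,\d s\big]$, which is $\le c(\gamma,t)\delta$ on $B_\delta$. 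No second moment of $\Phi_T$ is needed.
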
 
The proof of Proposition \ref{Proposition 4.2} will be deferred to Appendix \ref{Appendix}. Assuming the above fact we will conclude the proof of Theorem \ref{localization via overlaps}.

Let $\eps>0$ be given. Recall that we assume that $\lambda$ is differentiable and $\lambda^\prime(\gamma)<\gamma V(0)$. 
If $\rho=\frac{\gamma V(0)- \lambda^\prime(\gamma)}{\gamma}$ 
and $t$ is large enough, such that $\frac{8}{t \gamma^2 \eps^2} \leq \eps$, then  by Corollary \ref{Proposition 4.0} and \eqref{1st},

$$
\liminf_{T\rightarrow\infty}\P\bigg(\bigg|\rho -\frac{1}{t/T}\int_0^{t/T}\frac{1}{T}\int_0^T\bET^{\ssup{\Br}^\otimes}[V(\omega_s- \omega_s^\prime)]\d s\d r\bigg|\leq \eps\bigg)\geq 1-\eps 
$$

and consequently,

\begin{align}
\label{(4.5)}
\liminf_{T\rightarrow\infty}\P\bigg(\frac{1}{t/T}\int_0^{t/T}\frac{1}{T}\int_0^T\bET^{\ssup{\Br}^\otimes}[V(\omega_s-\omega_s^\prime)]\d s\d r\geq \frac{4\rho}{5}\bigg)\geq 1-\frac{\eps}{2}.	
\end{align}
Let $(I_{T,t})_t$ be the process of Proposition \ref{Proposition 4.2}. We define the sets
\begin{align*}
D&=\bigg\{\frac{1}{t/T}\int_0^{t/T}\frac{1}{T}\int_0^T\bET^{\ssup{\Br}^\otimes}[V(\omega_s-\omega_s^\prime)]\d s\d r\geq \frac{4\rho}{5}\bigg\}\\
E&=\bigg\{\frac{1}{t/T}\int_0^{t/T}\frac{1}{T}\int_0^T\bET^{\ssup{\Br}^\otimes}[V(\omega_s-\omega_s^\prime)]\d s\d r\leq \frac{3\rho}{5}\bigg\}\\
E_1&=\bigg\{\bigg|I_{T,t/T}-\frac{1}{t/T}\int_0^{t/T}\frac{1}{T}\int_0^T\bET^{\ssup{\Br}^\otimes}[V(\omega_s-\omega_s^\prime)]\d s\d r\bigg|\leq \frac{\rho}{5}\bigg\}\\
E_2&=\bigg\{\bigg|I_{T,t/T}-\frac{1}{T}\int_0^T\bET^\otimes[V(\omega_s-\omega_s^\prime)]\d s\bigg|\leq \frac{\rho}{5}\bigg\}.
\end{align*}
By Proposition \ref{Proposition 4.2} (a), $\lim_{T\rightarrow\infty}\P(E_1)=1$ and by part (b), we find $0<\delta\leq \rho/5$ small enough such that $\P(E_2|B_\delta)\geq 1/2$ for all $T\geq 0$.	Since $B_\delta\cap E_1\cap E_2\subset E$ and since $D$ and $E$ are disjoint,
$$
\P(B_\delta\cap H_1\cap H_2)\leq 1-\P(D).
$$
Further,
$$
\P(B_\delta\cap E_1\cap E_2)\geq \P(E_1)+\P(E_2\cap B_\delta)-1\geq\P(E_1)-1+\frac{\P(B_\delta)}{2}.
$$
Both inequalities together then yield 
$$
\P(B_\delta)\leq 2(2-\P(D)-\P(E_1)).
$$
Therefore, from \eqref{(4.5)} and $\lim_{T\rightarrow\infty}\P(E_1)=1$ it follows that $\limsup_{T\rightarrow\infty}\P(B_\delta)\leq\eps$ and we can deduce that, 
for every $\eps>0$ there exists $\delta>0$ sufficiently small such that
\begin{equation}\label{BC Theorem 1.4}
\limsup_{T\rightarrow\infty}\P\bigg(\frac{1}{T}\int_0^T\bET^{\otimes}[V(\omega_s-\omega_s')]\d s\leq \delta\bigg)\leq \eps.
\end{equation} 

Recall that we need to show that for every $\eps> 0$ there exist $\delta,T_0>0$ and an integer $k\in\N$ and $\omega^{\ssup{1}},...,\omega^{\ssup{k}}\in\mathscr C_\infty$ such that
	\begin{align}
	\label{BC Theorem 1.6}
	\P\bigg[\hP_{\gamma,T}^\otimes\bigg(\bigcup_{i=1}^k  \mathrm{Cov}_T(\omega^{\ssup i}, \omega^{\ssup{k+1}}) \geq \delta \bigg)\geq 1-\eps\bigg]\geq 1-\eps 
	\end{align}
	for all $T\geq T_0$, where 
\begin{align*}
&\mathrm{Cov}_T(\omega,\omega')=\frac{1}{T}\int_0^T(\kappa\star \kappa)(\omega_s-\omega^\prime_s)\d s\quad\text{and}\\
&\mathrm{Cov}_T(\omega^\prime)=\widehat\bE_T(\mathrm{Cov}_T(\omega,\omega^\prime))= \frac{1}{T}\int_0^T\int_{\R^d}\kappa(y-\omega^\prime_s)\widehat{\bE}_T[\kappa(y-\omega_s)]\d y\d s.
\end{align*}

Note that \eqref{BC Theorem 1.4} implies that	
\begin{equation}\label{claim}
\begin{aligned}
&\limsup_{T\rightarrow \infty}\E\big[\widehat{\bE}_T[\1_{\mathscr A_{T,\delta}}]\big]\leq \eps, \qquad \mbox{where }\\
& \mathscr A_{T,\delta}=\bigg\{\omega\in\mathscr C_\infty:\int_0^T\int_{\R^d}\kappa(y-\omega_s)\widehat{\bE}_T[\kappa(y-\omega_s^\prime)]\d y\d s\leq\delta T\bigg\}.
\end{aligned}
\end{equation}
	
	Indeed, fix $k\in\N$, $\delta>0$ and set $\gamma_T=\gamma\sqrt{1+\frac{k}{T}}$. For $\eta_1,...,\eta_k$ being independent copies of ${\dot B}$, we define
	$$
	\hP_{T,k}(\d \omega)=\frac{1}{Z_{T,k}}\e^{\gamma\big(\mathscr H_T(\omega,{\dot B})+\frac 1 {\sqrt T}\sum_{i=1}^k\mathscr H_T(\omega,\eta_i)\big)} \bP_0(\d \omega)
	$$
	where as usual, $Z_{T,k}$ is the normalizing constant and  $\widehat{\bE}_{T,k}$ stands for expectation with respect to the probability measure $\hP_{T,k}$.
	
	We also define
	\begin{align*}
	\mathscr A_{\delta,k}&=\bigg\{\omega\in\mathscr C_\infty:\int_0^T\int_{\R^d}\kappa(y-\omega_s)\widehat{\bE}_{T,k}[\kappa(y-\omega_s^\prime)]\d y\d s\leq \delta T\bigg\}\\
	\widetilde {\mathscr A}_{\delta,k}&=\bigg\{\omega\in\mathscr C_\infty:\int_0^T\int_{\R^d}\kappa(y-\omega_s)\widehat{\bE}_{\gamma_T,T}[\kappa(y-\omega_s^\prime)]\d y\d s\leq \delta T\bigg\}\\
	\mathscr B_{\delta,k}&=\bigg\{\int_0^T\int_{\R^d}\bigg(\widehat{\bE}_{T,k}[\kappa(y-\omega_s)]\bigg)^2\leq\delta T\bigg\}.
	\end{align*}
	
	Let $\eps>0$. There exist $K=K(\gamma,\eps)\in\N$ and $\alpha=\alpha(\gamma,\eps)>0$ and $\delta=\delta(\gamma,\eps)\in(0,1/2)$ such that, for all $T$ large enough there exists $k=k(T)\in\{0,1,...,K-1\}$ with
	$$
	\E\bigg[\widehat{\bE}_{T,k}\bigg[\1_{\mathscr A_{\delta^{4^{-k}},k}}\bigg]\1_{\mathscr B_{\alpha,k}^c}\bigg]\leq \eps/2\quad \text{and}\quad \P\big(\mathscr B_{\alpha,k}\big)\leq \eps/2.
	$$
	The latter is a consequence of \eqref{BC Theorem 1.4}. To show \eqref{claim}  (where we have $k=0$) 
	from the above estimate (where we have $k=k(T)>0$ copies of ${\dot B}$), we can estimate $\delta\leq \delta^{4^{-k}}$ and
	$$
	0\leq \gamma_T-\gamma\leq \gamma\frac{k(T)}{T}\leq \gamma\frac{K(\gamma,\eps)}{T}
	$$
	and use that 
	$
	\widehat{\bE}_{T,k}\big[\mathscr A_{\delta,k}\big]\overset{(d)}{=}\widehat{\bE}_{\gamma_T,T}\big[\widetilde{\mathscr A}_{\delta,k}\big],
	$
	which shows \eqref{claim}. 
	
	Finally, \eqref{BC Theorem 1.6} is deduced from \eqref{claim} as follows. Fix $\eps>0$. Then from \eqref{claim}, there exist $\delta>0$ small enough and $T_0>0$ large enough such that $\E\big[\widehat{\bE}_T[\1_{\mathscr A_{T,2\delta}}]\big]\leq \eps^2/2$ for any $T\geq T_0$. Then, from Markov's inequality it follows
\begin{align}
\label{6.1}
\P\big(\widehat{\bE}_T[\1_{\mathscr A_{T,2\delta}}]>\eps/2\big)\leq \eps.
\end{align}

By Paley-Zygmund inequality$^{\star\star\star}$\footnote{$^{\star\star\star}$For any random variable $X\geq 0$, $\bP[\frac X{\bE(X)} \geq \frac 12] \geq \frac 14 \frac{\bE(X)^2}{\bE(X^2)}$.}, for any $i=1,...,k$, and on the event $\{\mathrm{Cov}_T(\omega^\ssup{k+1})>2 \delta \big\}$,
\begin{align*}
\widehat{\bE}_{T}^\otimes\big[\1{\big\{\mathrm{Cov}_T(\omega^{\ssup{i}},\omega^{\ssup{k+1}})\geq \delta \big\}}\big|\omega^{\ssup {k+1}}\big]
&\geq \widehat{\bE}_{T}^\otimes\big[\1{\big\{\mathrm{Cov}_T(\omega^{\ssup{i}},\omega^{\ssup{k+1}})\geq 1/2 \mathrm{Cov}_T(\omega^\ssup{k+1}) \big\}}\big|\omega^{\ssup {k+1}}\big]\\
&\geq \frac{1}{4}\frac{\mathrm{Cov}_T(\omega^\ssup{k+1})^2}{\widehat{\bE}_T^\otimes\big[\mathrm{Cov}_T(\omega^{\ssup{i}},\omega^{\ssup{k+1}})^2\big|\omega^{\ssup{k+1}}\big]}
\geq \frac{\delta^2}{V(0)}.
\end{align*}
Hence,
\begin{align*}
\widehat{\bE}_{T}^\otimes&\bigg[\1\bigg\{\bigcap_{i=1}^k\big\{\mathrm{Cov}_T(\omega^{\ssup{i}},\omega^{\ssup{k+1}})< \delta \big\}\bigg\}\bigg|\omega^{\ssup {k+1}}\bigg]\1{\big\{\mathrm{Cov}_T(\omega^\ssup{k+1})>2 \delta  \big\}}
\leq \bigg(1-\frac{\delta^2}{V(0)}\bigg)^k\leq \e^{-\frac{\delta^2}{V(0)}k}.
\end{align*}
If we choose $k=\lceil -\delta^{-2}V(0)\log(\eps/2)\rceil\vee 0$, then 
\begin{align*}
\widehat{\bE}_{T}^\otimes&\bigg[\1\bigg\{\bigcap_{i=1}^k\big\{\mathrm{Cov}_T(\omega^{\ssup{i}},\omega^{\ssup{k+1}})< \delta \big\}\bigg\}\bigg]
\leq \frac{\eps}{2}+\widehat{\bE}_{T}^\otimes\big[\1{\big\{\mathrm{Cov}_T(\omega^\ssup{k+1})\leq 2 \delta  \big\}}\big]
=\frac{\eps}{2}+\widehat{\bE}_T[\1_{\mathscr A_{T,2\delta}}]
\end{align*}
and furthermore, 
\begin{align*}
\P\bigg(\widehat{\bE}_{T}^\otimes&\bigg[\1\bigg\{\bigcup_{i=1}^k\big\{\mathrm{Cov}_T(\omega^{\ssup{i}},\omega^{\ssup{k+1}})\geq \delta \big\}\bigg\}\bigg]\geq 1-\eps\bigg)\\
&=\P\bigg(\widehat{\bE}_{T}^\otimes\bigg[\1\bigg\{\bigcap_{i=1}^k\big\{\mathrm{Cov}_T(\omega^{\ssup{i}},\omega^{\ssup{k+1}})< \delta \big\}\bigg\}\bigg]\leq \eps\bigg)
\geq \P\Big(\widehat{\bE}_T[\1_{\mathscr A_{T,2\delta}}]\leq \eps/2\Big)\geq 1-\eps
\end{align*}
where the last inequality is due to \eqref{6.1}. This completes the proof of \eqref{BC Theorem 1.6} and therefore that of Theorem \ref{localization via overlaps}. \qed


\appendix

\section{} \label{Appendix}

\subsection{Proof of Proposition \ref{Proposition 4.2}.}

Given the input from previous sections, the arguments needed for the proof of Proposition \ref{Proposition 4.2} 
will follow the approach of \cite{BC19} adapted to our setting modulo some modifications. However, in this execution some of the arguments get simplified in our set up, thanks to the estimate 
$$
\begin{aligned}
V(x):=(\kappa \star \kappa)(x)= \int_{\R^d} \kappa (x-y) \kappa(y) \d y &\leq \bigg(\int_{\R^d} \kappa^2(x-y) \d y\bigg)^{1/2} \bigg(\int_{\R^d} \kappa^2(y) \d y\bigg)^{1/2} \\
&= \int_{\R^d} \kappa(y) \kappa(-y) \d y= (\kappa \star \kappa)(0) = V(0)
\end{aligned}
$$
for which we use that the mollifier $\kappa(\cdot)$ is a spherically symmetric function around the origin.

Recall that $\lambda(\gamma)=\lim_{T\to\infty} f_T(\gamma,\cdot)=\lim_{T\to\infty}\frac 1T\log Z_T$ and for any $\delta>0$, $\gamma>0$ and $t>0$, suitable constants $c(\gamma,t)$ and $C(\delta)$, define 

\begin{equation}\label{Mdef}
M_T^2=c(\gamma,t)\bigg(C(\delta)\bET\bigg[\Big|\lambda^\prime(\gamma)-\frac{\mathscr H_T(\omega)}{T}\Big|\bigg]+\delta Z_T^{-\frac{12t}{T}}\bigg).
\end{equation}

\begin{lemma}
	\label{Lemma 4.4.}
	Fix $t>0$ and $\eps>0$. Then the following statements hold:
	\begin{itemize}
		\item[(a)] There is $\delta=\delta(\eps)$ sufficiently small so that 
		$$
		\limsup_{T\rightarrow\infty}\E[M_T]\leq\eps. 
		$$
		\item[(b)]  For every $s\in[0,T]$ and $r\in[0,t/T]$,
		$$
		\begin{aligned}
		&\bigg\|\bET^\otimes\big[V(\omega_s-\omega^\prime_s)\Phi_T(\omega,\omega^\prime,\eta_r)-\bET^{\ssup{\Br}^\otimes}[V(\omega_s-\omega_s^\prime)]\bigg\|_{L^1(\P)}
		& \leq V(0)\E[M_T].
		\end{aligned}
		$$

		\item[(c)] There exists $\delta^\prime=\delta^\prime(\gamma,T,\eps)>0$ sufficiently small that for every $s\in[0,T]$, $r\in[0,t/T]$ and $\delta\in(0,\delta^\prime]$,
		$$
		\begin{aligned}
		&\bigg\|\bET^\otimes\big[V(\omega_s-\omega^\prime_s)\Phi_T(\omega,\omega^\prime,\eta_r)-\bET^\otimes[V(\omega_s-\omega_s^\prime)]\bigg\|_{L^1(\P,B_\delta)} 
		&\leq\eps V(0)\P(B_\delta) 
		\end{aligned}
		$$
		where $L^1(\P,B_\delta)$ is the $L^1(\P)$ norm defined on the event $B_\delta$ defined in \eqref{definition of the set B delta}. 
	\end{itemize}
\end{lemma}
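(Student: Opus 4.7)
The proof splits along the three parts, with (a) being a relatively soft consequence of already-established $L^1(\P)$ convergences while (b) and (c) carry the analytic content.

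For (a), I would first apply Jensen's inequality to reduce $\E[M_T]$ to $\sqrt{\E[M_T^2]}$, and then treat the two summands of $M_T^2$ separately. For any fixed $\delta$, the first summand $c(\gamma,t)C(\delta)\E[\bET[|\lambda^\prime(\gamma)-\mathscr H_T(\omega)/T|]]$ tends to $0$ as $T\to\infty$ by the $L^1(\P)$ convergence in Proposition \ref{prop3}(ii) (equation \eqref{Lemma 3.11}). For the second summand it suffices to show $\sup_T\E[Z_T^{-12t/T}]<\infty$, which follows from the Gaussian concentration of $\log Z_T$ around $\E[\log Z_T]\sim T\lambda(\gamma)$ derived inside the proof of Lemma \ref{lemma-CC}: writing $\log Z_T=\E[\log Z_T]+X_T$ with $X_T$ sub-Gaussian of proxy variance $\gamma^2 T V(0)$, one obtains
$$\E[Z_T^{-12t/T}] = e^{-\frac{12t}{T}\E[\log Z_T]}\E\big[e^{-\frac{12t}{T}X_T}\big]\leq C(t,\gamma)$$
uniformly in $T$. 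Choosing $\delta$ small relative to $\eps$ then yields $\limsup_T\E[M_T]\leq\eps$.

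For (b), set $\mathscr A=\mathscr H_T(\omega,{\dot B})+\mathscr H_T(\omega',{\dot B})$ and $\mathscr B=\mathscr H_T(\omega,\eta_r)+\mathscr H_T(\omega',\eta_r)$. Both quantities to be compared admit the explicit form $\bE_0^\otimes[V(\omega_s-\omega'_s)e^{\beta\mathscr A+\gamma\mathscr B}]/\bE_0^\otimes[e^{\beta\mathscr A+\gamma\mathscr B}]$, with $\beta=\gamma$ for $\bET^\otimes[V\Phi_T]$ and $\beta=\gamma e^{-r}$ for $\bET^{\ssup{\Br}^\otimes}[V]$ (using $\mathscr H_T(\omega,\Br)=e^{-r}\mathscr H_T(\omega,{\dot B})+\mathscr H_T(\omega,\eta_r)$). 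I would interpolate via the one-parameter family $\mu_\theta(\d\omega\d\omega')\propto e^{\theta\mathscr A+\gamma\mathscr B}\bP_0^\otimes$ for $\theta\in[\gamma e^{-r},\gamma]$, giving
$$\bET^\otimes[V\Phi_T]-\bET^{\ssup{\Br}^\otimes}[V]=\int_{\gamma e^{-r}}^\gamma\mathrm{Cov}_{\mu_\theta}(V,\mathscr A)\,\d\theta.$$
Since $0\leq V\leq V(0)$ one has $|\mathrm{Cov}_{\mu_\theta}(V,\mathscr A)|\leq V(0)\,E_{\mu_\theta}[|\mathscr A-E_{\mu_\theta}[\mathscr A]|]$. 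The integration interval has length $\gamma(1-e^{-r})\leq\gamma t/T$, which cancels the factor of $T$ coming from $\mathscr A=O(T)$; what remains is to transport $E_{\mu_\theta}[|\mathscr A/T-2\lambda^\prime(\gamma)|]$ back to the reference quantity $\bET[|\mathscr H_T/T-\lambda^\prime(\gamma)|]$ appearing in $M_T^2$. This transport is done by changing measure from $\mu_\theta$ to $\bET^\otimes$ (undoing the $\Phi_T$ tilt) and applying Cauchy-Schwarz to the two paired Brownian copies as well as to the $\eta_r$-Gaussian integration; the residual multiplicative factor is bounded by $Z_T^{-12t/T}$, which is precisely the second term of $M_T^2$.

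For (c) the difference to control is $\bET^\otimes[V(\Phi_T-1)]$, and the restriction to $B_\delta$ forces the averaged overlap to be at most $\delta$, which in turn makes the joint covariance between $\mathscr H_T(\omega,\eta_r)$ and $\mathscr H_T(\omega',\eta_r)$ under $\bET^\otimes$ small (each path couples essentially to an independent piece of $\eta_r$), so that $\Phi_T=e^{\gamma\mathscr B}/\widehat Z_T(\eta_r)^2$ concentrates around $1$. Quantifying this concentration via a second-moment estimate under $\bET^\otimes$, combined with $r\leq t/T$ and the $\delta$-smallness of the overlap, delivers the required bound $\eps V(0)\P(B_\delta)$ for $\delta$ sufficiently small depending on $T,\gamma,\eps$. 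The main obstacle I expect is the careful bookkeeping in (b), in particular matching the precise exponent $12t/T$ on $Z_T$: it is the cumulative cost of two Cauchy-Schwarz steps on the paired Brownian copies together with one on the $\eta_r$-integration, each costing a factor of the form $Z_T^{-c\cdot r}$ for small $c$ and combining additively under $r\leq t/T$. Once this combinatorial accounting is done, the remaining estimates reduce to Proposition \ref{prop3}(ii) combined with the concentration from part (a).
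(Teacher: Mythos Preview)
Your treatment of parts (a) and (c) is essentially correct and aligned with the paper. For (a), the paper also relies on \eqref{Lemma 3.11} to kill the first summand and leaves the boundedness of $\E[Z_T^{-12t/T}]$ implicit; your sub-Gaussian argument for the latter is fine. For (c), the paper likewise exploits that on $B_\delta$ the covariance $(1-e^{-2r})\int_0^T V(\omega_s-\omega'_s)\,ds$ is $O(\delta t)$, so the $\eta_r$-tilt $\Phi_T$ is close to $1$ in $L^2(\P_\eta)$; your description matches this.

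For (b) your interpolation identity is correct and elegant: writing both ratios as $E_{\mu_\theta}[V]$ for $\theta=\gamma$ and $\theta=\gamma e^{-r}$ and differentiating gives the covariance integral, and $0\le V\le V(0)$ indeed yields $|\mathrm{Cov}_{\mu_\theta}(V,\mathscr A)|\le V(0)\,E_{\mu_\theta}[|\mathscr A-E_{\mu_\theta}\mathscr A|]$. But the subsequent ``transport'' is where the work lies, and your description via Cauchy--Schwarz does not deliver the stated $M_T$. A change of measure from $\mu_\theta$ to $\bET^\otimes$ followed by Cauchy--Schwarz would produce $\bET^\otimes[(\mathscr A/T-2\lambda'(\gamma))^2]^{1/2}$, i.e.\ a \emph{second} moment, whereas $M_T^2$ contains the \emph{first} moment $\bET[|\mathscr H_T/T-\lambda'(\gamma)|]$. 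You also need to remove the $\eta_r$-tilt and replace the centering $E_{\mu_\theta}\mathscr A$ by $2T\lambda'(\gamma)$; neither step is automatic.

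The paper takes a different route that sidesteps these issues. It writes $\bET^{\ssup{\Br}^\otimes}[V]=X_{s,r}/Y_r$ and $\bET^\otimes[V\Phi_T]=X'_{s,r}/Y'_r$, where the primed quantities are obtained by \emph{replacing} the random factor $e^{\gamma(e^{-r}-1)\mathscr A}$ inside $X_{s,r},Y_r$ by the deterministic $e^{2T\gamma(e^{-r}-1)\lambda'(\gamma)}$. The ratio difference is then controlled by $(\E_\eta[Y_r^{-2}])^{1/2}\big((\E_\eta[(X-X')^2])^{1/2}+V(0)(\E_\eta[(Y-Y')^2])^{1/2}\big)$. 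After integrating out $\eta$, the squared differences reduce to $\bET^\otimes\big[(e^{\gamma\mathscr I}-1)^2\big]$ with $\mathscr I=(e^{-r}-1)(\mathscr A-2T\lambda'(\gamma))$, and the elementary bound $(e^x-1)^2\le C(L)|x|$ for $x\le L$ together with a tail estimate on $\{x>L\}$ produces exactly the first-moment term $C(\delta)\bET[|\mathscr H_T/T-\lambda'(\gamma)|]$ plus the tail $\delta Z_T^{-12t/T}$. This substitution-plus-truncation mechanism is what generates the specific structure of $M_T^2$; your interpolation could in principle be completed by inserting the same truncation inside $E_{\mu_\theta}$, but as written it does not yet reach the target.
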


We first conclude the proof of Proposition \ref{Proposition 4.2} and prove this technical fact afterwards.

\begin{proof}[{\bf Proof of Proposition \ref{Proposition 4.2} (Assuming Lemma \ref{Lemma 4.4.})}] 
	
	Let $t,\eps>0$ be fixed. By part (a)-(b) of Lemma \ref{Lemma 4.4.}, we have $\limsup_{T\rightarrow\infty}\E[M_T]\leq\eps^2$ and
	$$
	\bigg\|\bET^\otimes\big[V(\omega_s-\omega^\prime_s)\Phi_T(\omega,\omega^\prime,\eta_r)-\bET^{\ssup{\Br}^\otimes}[V(\omega_s-\omega_s^\prime)]\bigg\|_{L^1(\P)}\leq V(0)\E[M_T].
	$$
	With $I_{T,t}$ defined in \eqref{Idef} and by Lemma \ref{Lemma 4.4.}, we have 
	\begin{align*}
	\bigg\|I_{T,t/T}-\frac{1}{t/T}\int_0^{t/T}\frac{1}{T}\int_0^T\bET^{\ssup{\Br}^\otimes}[V(\omega_s-\omega_s^\prime)]\d s\d r\bigg\|_{L^1(\P)}
	\leq V(0)\E[M_T]
	\end{align*}
	and then part (a) of Proposition \ref{Proposition 4.2} follows from the Markov inequality.
	The proof of the second part is an identical application of Markov's inequality and part (c) of Lemma \ref{Lemma 4.4.} with the choice $\eps=\eps_1 \eps_2$. 
	
\end{proof}


{\bf {Proof of Part (a) and Part (b) of Lemma \ref{Lemma 4.4.}}.}
We will complete the proof in three main steps. Recall that $\eta$ is an independent copy of ${\dot B}$, while 
$$
\Br(s,\cdot)=\e^{-r} {\dot B}(s,\cdot)+ \e^{-r} \eta(s(\e^{2r}-1)^{-1}, \cdot) \text{ if }r>0,\quad \boldsymbol{{\dot B}}_0={\dot B},
$$
which has the same law as that of ${\dot B}$ and also  
$$
\eta_r(s,\cdot)= \e^{-r} \eta(s(\e^{2r}-1)^{-1},\cdot)\, \stackrel{\mathrm{(d)}}= \, \sqrt{1-\e^{-2r}} \eta(s,\cdot),\quad \eta_0=0.
$$
We will also use the simple fact that for any $c\geq 0$, if $r\leq t/T$, 
\begin{align}
\label{inequality (4.29)}
T(1-\e^{-cr})\leq Tcr\leq ct.
\end{align}

\begin{lemma}\label{lemma1}
	
	Fix $t,\gamma>0$ and $r \leq t/T$, and denote by 
	\begin{equation}\label{Yr}
	Y_r^{1/2}= \bET\bigg[\exp\big\{\gamma \big[\mathscr H_T(\omega,\Br)- \mathscr H_T(\omega,{\dot B})\big]\big\}\bigg].
	\end{equation} 
	Then there is a constant $c(\gamma,t)$ such that 
	$$
	\sup_T \E\big[Y_r^{-2}\big] \leq c(\gamma,t).
	$$
	
\end{lemma}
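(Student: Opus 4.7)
The plan is to identify $Y_r^{-2}=(Z_T/Z_T(\Br))^4$, where $Z_T(\Br):=\bE_0[\e^{\gamma\mathscr H_T(\omega,\Br)}]$, and then exploit the orthogonal Gaussian decomposition of ${\dot B}$ relative to $\Br$. Specifically, setting $\widetilde{\dot B}:=({\dot B}-\e^{-r}\Br)/\sqrt{1-\e^{-2r}}$ yields a standard space-time white noise independent of $\Br$, so the pair $({\dot B},\Br)$ is just a Gaussian rotation of the pair $(\Br,\widetilde{\dot B})$ of independent white noises. This gives the pathwise identity
\begin{equation*}
\mathscr H_T(\omega,{\dot B})=\e^{-r}\mathscr H_T(\omega,\Br)+\sqrt{1-\e^{-2r}}\,\mathscr H_T(\omega,\widetilde{\dot B})
\end{equation*}
for every Brownian trajectory $\omega$, which is the single structural input of the argument.

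Next I will expand $Z_T^4=\bE_0^{\otimes 4}[\e^{\gamma\sum_{i=1}^4\mathscr H_T(\omega^{(i)},{\dot B})}]$ via four independent Brownian paths, substitute the identity above, and integrate out $\widetilde{\dot B}$ using the Gaussian moment generating function. The resulting Gaussian expectation produces the factor
\begin{equation*}
\exp\bigg(\frac{\gamma^2(1-\e^{-2r})}{2}\sum_{i,j=1}^4\int_0^T V(\omega^{(i)}_s-\omega^{(j)}_s)\,\d s\bigg),
\end{equation*}
which, by the trivial bound $V(\cdot)\leq V(0)$ (the Cauchy-Schwarz estimate already exploited in Lemma \ref{lemma-Phi}) together with the inequality $T(1-\e^{-2r})\leq 2t$ from \eqref{inequality (4.29)}, is at most $\e^{16\gamma^2 tV(0)}$. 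Dividing by $Z_T(\Br)^4$, which is $\widetilde{\dot B}$-independent, and then taking $\E_{\Br}$ (using $\Br\stackrel{(d)}{=}{\dot B}$) reduces the task to bounding
\begin{equation*}
\E\bigg[\bigg(\frac{Z_T(\e^{-r}\gamma)}{Z_T(\gamma)}\bigg)^4\bigg]
\end{equation*}
uniformly in $T$, where $Z_T(\beta):=\bE_0[\e^{\beta\mathscr H_T(\omega,{\dot B})}]$.

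For this residual ratio the plan is a double Jensen argument. Since $\e^{-r}\in(0,1]$ the map $x\mapsto x^{\e^{-r}}$ is concave on $(0,\infty)$, so Jensen on the probability $\bP_0$ gives $Z_T(\e^{-r}\gamma)=\bE_0[(\e^{\gamma\mathscr H_T})^{\e^{-r}}]\leq Z_T(\gamma)^{\e^{-r}}$, hence $(Z_T(\e^{-r}\gamma)/Z_T(\gamma))^4\leq Z_T(\gamma)^{-q}$ with $q:=4(1-\e^{-r})\in[0,4)$. A second Jensen on $\bP_0$, now with the convex function $x\mapsto x^{-q}$, yields $Z_T(\gamma)^{-q}\leq\bE_0[\e^{-q\gamma\mathscr H_T}]$, and applying $\E$ together with the standard Gaussian moment formula for the centered Gaussian $\mathscr H_T$ (of variance $TV(0)$ under $\P$) gives $\E[Z_T(\gamma)^{-q}]\leq\e^{q^2\gamma^2 TV(0)/2}$. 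Because $q\leq 4r\leq 4t/T$, the exponent satisfies $q^2T\leq 16t^2/T$ which is bounded uniformly over $T\geq 1$, while the range $T\in(0,1]$ is absorbed using the crude bound $q\leq 4$. Combining the two estimates produces an explicit bound $\E[Y_r^{-2}]\leq c(\gamma,t)$ of the form $\exp(16\gamma^2 tV(0)+8\gamma^2 V(0)t^2)$, uniform in $T$.

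I do not anticipate a serious technical obstacle: both the Gaussian orthogonal decomposition and the double-Jensen reduction are clean once one recognizes the independence $\widetilde{\dot B}\perp\Br$. The only point requiring care is tracking the small parameter $r\leq t/T$ through the two bounds $T(1-\e^{-r})\leq t$ and $T(1-\e^{-2r})\leq 2t$ supplied by \eqref{inequality (4.29)}, so that the factor of $T$ appearing after each Gaussian integration is exactly absorbed.
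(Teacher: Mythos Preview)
Your proof is correct and takes a genuinely different route from the paper's. The paper applies Jensen's inequality for $x\mapsto x^{-2}$ directly to $Y_r^{-1}=(\bET[\cdots])^{-2}$, passes to a $\bET^\otimes$-expectation, integrates out $\eta$ via the Gaussian moment generating function, and then controls the residual factor $\bET[\e^{4\gamma(1-\e^{-r})\mathscr H_T(\omega,{\dot B})}]$ by an iterated (dyadic) Cauchy--Schwarz argument yielding $Z_T(\gamma)^{-1/2^k}(Z_T(2\gamma)^{1/2^k}+1)$ with $k=\lfloor\log_2(T/qt)\rfloor$. You instead recognize $Y_r^{-2}=(Z_T({\dot B})/Z_T(\Br))^4$ and rotate to the orthogonal pair $(\Br,\widetilde{\dot B})$; integrating out $\widetilde{\dot B}$ reduces the problem to bounding $\E[(Z_T(\e^{-r}\gamma)/Z_T(\gamma))^4]$ under a single noise, which your double Jensen (concavity of $x\mapsto x^{\e^{-r}}$ followed by convexity of $x\mapsto x^{-q}$) handles cleanly. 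Your argument is shorter, avoids the dyadic Cauchy--Schwarz step, and gives an explicit constant; the paper's approach stays in the original $({\dot B},\eta)$-coordinates and does not require identifying the rotated decomposition. Both produce bounds of the same form $\exp(c\gamma^2 tV(0))$ uniform in~$T$.
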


\begin{proof}
	
	By Jensen's inequality and from the definition of $Y_r$ we have 
	\begin{align*}
	\E_\eta[Y_r^{-2}]&\leq 
	\bET^\otimes[\e^{2\gamma^2(1-\e^{-2r})(TV(0)+\int_0^TV(\omega_s-\omega_s^\prime)\d s)}\e^{2\gamma(1-\e^{-r})(\mathscr H_T(\omega,{\dot B})+\mathscr H_T(\omega^\prime,{\dot B}))}]\\
	&\leq \e^{4\gamma^2(1-\e^{-2r})TV(0)}\bET^\otimes[\e^{2\gamma(1-\e^{-r})(\mathscr H_T(\omega,{\dot B})+\mathscr H_T(\omega^\prime,{\dot B}))}].
	\end{align*}
	by the last display, and an application of \eqref{inequality (4.29)} and the Cauchy-Schwarz inequality, it holds $\E_\eta[Y_r^{-2}]\leq c(\gamma,t)\bET[\e^{4\gamma(1-\e^{-r})\mathscr H_T(\omega,{\dot B})}]$. 
	It is also a straightforward application of Cauchy-Schwarz inequality that for any $q>0$ and $k=\lfloor \log_2\frac{T}{qt}\rfloor$ and for any $T$ large enough such that $k\geq 1$,
	$$
	\bET[\e^{q\gamma(1-\e^{-r})\mathscr H_T(\omega,{\dot B})}]\leq Z_T(\gamma)^{-\frac{1}{2^k}}(Z_T(2\gamma)^{\frac{1}{2^k}}+1).
	$$ 
	We use the inequality above for $q=4$. Then, if $T>0$ is large enough such that $k=\lfloor \log_2\frac{T}{qt}\rfloor\geq 1$ it holds 
	\begin{align*}
	\E\big[\E_\eta[Y_r^{-2}]\big]&\leq c(\gamma,t)\E\Big[Z_T(\gamma)^{-\frac{1}{2^k}}(Z_T(2\gamma)^{\frac{1}{2^k}}+1)\Big]\\
	&\leq c(\gamma,t)\Big(\e^{\frac{\gamma^2}{2^{k+1}}TV(0)}\e^{\frac{\gamma^2}{2^{k-1}}TV(0)}+\e^{\frac{\gamma^2}{2^{k+1}}TV(0)}\Big)
	\leq c(\gamma,t),
	\end{align*}
	where we used for the second inequality Cauchy-Schwarz and Jensen.
\end{proof}

\begin{lemma}\label{lemma2}
	
	Fix $t,\gamma>0$ and $r \leq t/T$, and denote by 
	
	$$
	(Y_r^\prime)^{1/2}= \bET\bigg[\exp\bigg\{\gamma \mathscr H_T(\omega,\eta_r)+ T\gamma(\e^{-r}-1)\lambda^\prime(\gamma)\bigg\}\bigg].
	$$
	Then with $Y_r$ defined in \eqref{Yr} and for any $\delta$ we find a constant $C(\delta)$ such that
	\begin{align}
	\label{inequality for Yr-Yrprime}
	\E_\eta[(Y_r-Y_r^\prime)^2]\leq c(\gamma,t)\bigg( C(\delta)\bET\bigg[\Big|\lambda^\prime(\gamma)-\frac{\mathscr H_T(\omega)}{T}\Big|\bigg]+\delta Z_T^{-\frac{12t}{T}}\bigg).
	\end{align}
	
\end{lemma}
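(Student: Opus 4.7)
The plan is to exploit the decomposition $\Br(s,y) = e^{-r}{\dot B}(s,y) + \eta_r(s,y)$, which gives $\mathscr H_T(\omega,\Br) = e^{-r}\mathscr H_T(\omega,{\dot B}) + \mathscr H_T(\omega,\eta_r)$ and hence
\[
Y_r^{1/2} = \bET\bigl[\exp\{\gamma(e^{-r}-1)\mathscr H_T(\omega) + \gamma\mathscr H_T(\omega,\eta_r)\}\bigr].
\]
The functional $(Y_r')^{1/2}$ is obtained from $Y_r^{1/2}$ by replacing the random quantity $\mathscr H_T(\omega)$ inside the exponent with its GMC-mean $T\lambda'(\gamma)$, which by Proposition \ref{prop3}(ii) is the target value whenever $\lambda$ is differentiable at $\gamma$. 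Thus the proof reduces to quantifying this ``approximation by the mean'' in $L^2(\E_\eta)$.

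First I would factor $Y_r-Y_r' = (Y_r^{1/2}-(Y_r')^{1/2})(Y_r^{1/2}+(Y_r')^{1/2})$ and rewrite the difference of square roots as
\[
Y_r^{1/2}-(Y_r')^{1/2} = \bET\bigl[e^{\gamma\mathscr H_T(\omega,\eta_r)}\bigl(e^{\gamma(e^{-r}-1)\mathscr H_T(\omega)} - e^{T\gamma(e^{-r}-1)\lambda'(\gamma)}\bigr)\bigr].
\]
Then I would fix a parameter $\epsilon_0=\epsilon_0(\delta)>0$ (to be tuned at the end) and split the $\omega$-integration according to the event $G = \{\omega:|\mathscr H_T(\omega)/T-\lambda'(\gamma)|\leq \epsilon_0\}$. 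On $G$, the inequality $|e^x-e^y|\leq (e^x+e^y)|x-y|$, together with the bound $T\gamma(1-e^{-r})\leq \gamma t$ from \eqref{inequality (4.29)}, produces a $G$-contribution proportional to $\bET[|\mathscr H_T(\omega)/T-\lambda'(\gamma)|]$, matching the first term on the right of \eqref{inequality for Yr-Yrprime} with $C(\delta)$ absorbing a constant of order $(\gamma t/\epsilon_0)^2$. On $G^c$, I would use the crude bound $|e^{cA}-e^{L}|\leq e^{cA}+e^{L}$ combined with Chebyshev, $\bET[\mathbf{1}_{G^c}]\leq \epsilon_0^{-1}\bET[|\mathscr H_T/T-\lambda'(\gamma)|]$.

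After squaring and multiplying by $(Y_r^{1/2}+(Y_r')^{1/2})^2$, the plan is to take $\E_\eta$ throughout. Since $\eta_r$ is independent of ${\dot B}$ and of the Brownian paths, the Gaussian exponential moments $\E_\eta[e^{\alpha\mathscr H_T(\omega,\eta_r)}]=\exp\{\alpha^2(1-e^{-2r})TV(0)/2\}$ are bounded, via $T(1-e^{-2r})\leq 2t$, by a constant depending only on $\gamma$ and $t$. The exponential moments in $\mathscr H_T(\omega)$ appearing under $\bET$ are controlled through the identity $\bET[e^{\beta\mathscr H_T(\omega)}]=Z_T(\gamma+\beta)/Z_T(\gamma)$ and the Jensen-type estimate $Z_T(\gamma+\beta)\leq Z_T^{1+\beta/\gamma}$ (valid when $1+\beta/\gamma\in[0,1]$); this yields partition function ratios of the form $Z_T^{\beta/\gamma}$ with $|\beta|=O(t/T)$, which on the regime $\{Z_T\leq 1\}$ are bounded by $Z_T^{-Ct/T}$ (and by $1$ otherwise). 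Iterated Cauchy-Schwarz is used to split $(Y_r^{1/2}-(Y_r')^{1/2})^2(Y_r^{1/2}+(Y_r')^{1/2})^2$ into products of independent $\bET^{\otimes}$-quantities with a controlled number of Brownian copies; carefully tracking these, and choosing the threshold $\epsilon_0(\delta)$ so that the Chebyshev-reduced $G^c$-residue carries exactly the prescribed prefactor, yields the term $\delta Z_T^{-12t/T}$.

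The main obstacle will be the detailed bookkeeping required for these iterated Cauchy-Schwarz steps: each application doubles the number of $\omega$-copies inside the $\bET^{\otimes}$-expectation, introduces new cross-correlations through $\eta_r$ (which produce pairwise $V(\omega_i-\omega_j)$ integrals after $\E_\eta$), and spawns a new partition-function ratio $Z_T(\gamma+\beta)/Z_T(\gamma)$ to be controlled. The specific exponent $12$ in $Z_T^{-12t/T}$ is dictated by the total number of $\omega$-copies produced along the way, combined with the precise trade-off between $\epsilon_0$ and $\delta$ required to convert the $G^c$-residue into the prescribed form $\delta Z_T^{-12t/T}$.
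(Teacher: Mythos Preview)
Your overall plan—replace $\mathscr H_T(\omega)$ by $T\lambda'(\gamma)$ in the exponent and split into a ``good'' region where the deviation is small and a ``bad'' tail—is the right idea, and it is the same idea the paper uses. But the execution you outline is substantially more complicated than necessary, and the way you handle the $G^c$-piece is muddled.

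The paper's simplification is to integrate out $\eta_r$ \emph{first}, before any splitting. Writing $Y_r$ and $Y_r'$ as squared $\bET$-expectations, one has $Y_r-Y_r'=\bET^\otimes\big[A(\omega,\omega')\,e^{\gamma(\mathscr H_T(\omega,\eta_r)+\mathscr H_T(\omega',\eta_r))}\big]$ with $A$ independent of $\eta_r$. Jensen's inequality pushes the square inside $\bET^\otimes$, and the Gaussian moment $\E_\eta\big[e^{2\gamma(\mathscr H_T(\omega,\eta_r)+\mathscr H_T(\omega',\eta_r))}\big]$ is bounded by $c(\gamma,t)$ via $V(\cdot)\le V(0)$ and $T(1-e^{-2r})\le 2t$. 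This collapses the whole problem to the $\eta$-free estimate
\[
\E_\eta[(Y_r-Y_r')^2]\le c(\gamma,t)\,\bET^\otimes\big[(e^{\gamma\mathscr I}-1)^2\big],\qquad \mathscr I=(e^{-r}-1)\big(\mathscr H_T(\omega)+\mathscr H_T(\omega')-2T\lambda'(\gamma)\big),
\]
so there is no factoring $a^2-b^2$, no $(Y_r^{1/2}+(Y_r')^{1/2})^2$ to control, and no iterated Cauchy--Schwarz with proliferating Brownian copies. The split is then \emph{one-sided}, $\{\gamma\mathscr I\le L\}$ versus $\{\gamma\mathscr I>L\}$: on the first set one uses $(e^x-1)^2\le C(L)|x|$ for all $x\le L$ (note $(e^x-1)^2$ stays bounded as $x\to-\infty$), giving the $C(\delta)\,\bET[|\lambda'(\gamma)-\mathscr H_T/T|]$ term. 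On $\{\gamma\mathscr I>L\}$ one shows algebraically that this forces $\mathscr H_T(\omega)+\mathscr H_T(\omega')$ to be very negative, so the $\bET^\otimes$-density $e^{\gamma(\mathscr H_T(\omega)+\mathscr H_T(\omega'))}/Z_T^2$ itself is tiny; a direct computation gives the bound $e^{-L}Z_T^{-12t/T}$, and choosing $e^{-L}\le\delta$ yields the second term.

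By contrast, your $G^c$-treatment is inconsistent: you invoke Chebyshev to bound $\bET[\mathbf 1_{G^c}]$ in terms of $\bET[|\mathscr H_T/T-\lambda'(\gamma)|]$, which would send the bad piece into the \emph{first} term of \eqref{inequality for Yr-Yrprime}, yet you then claim it produces the \emph{second} term $\delta Z_T^{-12t/T}$. These are two different quantities and no choice of $\epsilon_0(\delta)$ converts one into the other. The mechanism that actually yields $\delta Z_T^{-12t/T}$ is not Chebyshev but the observation that the bad event forces $\mathscr H_T$ to be so negative that the GMC density is itself exponentially small; your outline never isolates this. If you pursue your route you would eventually rediscover this, but only after unnecessary bookkeeping—the exponent $12$ is not forced by iterated Cauchy--Schwarz doubling but comes from a single direct estimate on the bad set.
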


\begin{proof}

	By Jensen's inequality and using $V(\omega_s-\omega_s^\prime)\leq V(0)$, followed by an application of \eqref{inequality (4.29)},
	\begin{align}
	\begin{split}
	\label{first inequality for Claim 4.9}
	&\E_\eta[(Y_r-Y_r^\prime)^2]\leq c(\gamma,t)\bET^\otimes\bigg[\Big(\e^{\gamma(\e^{-r}-1)(\mathscr H_T(\omega,{\dot B})+\mathscr H_T(\omega^\prime,{\dot B})-2T\lambda^\prime(\gamma))}-1\Big)^2\bigg].
	\end{split}
	\end{align}
	For any $L>0$ we find a constant such that $(\e^x-1)^2\leq C(L)|x|$ for all $x\leq L$. Also applying \eqref{inequality (4.29)} once more and using the notation $\mathscr I(r,T)=\big(\e^{-r}-1\big) (\mathscr H_T(\omega,{\dot B})+\mathscr H_T(\omega^\prime,{\dot B})-2T\lambda^\prime(\gamma))$, we have
	\begin{align}
	\begin{split}
	\label{second inequality for claim 4.9}
	\bET^\otimes\bigg[\Big(&\e^{\gamma\mathscr I(r,T)}-1\Big)^2\bigg]\\
	&\leq C(L,\gamma,t)\bET\bigg[\Big|\lambda^\prime(\gamma)-\frac{\mathscr H_T(\omega)}{T}\Big|\bigg]
	+\bET^\otimes\bigg[\Big(\e^{\gamma\mathscr I(r,T)}-1\Big)^2\1\{\gamma\mathscr I(r,T)>L\}\bigg].
	\end{split}
	\end{align}
	If $L$ is large enough such that $L\geq 4\gamma t \lambda^\prime(\gamma)$, then
	\begin{align*}
	\gamma(1-\e^{-r})(T\lambda^\prime(\gamma)-\mathscr H_T(\omega)+T\lambda^\prime(\gamma)-\mathscr H_T(\omega^\prime))>L\geq 4\gamma t \lambda^\prime(\gamma)\geq 4\gamma(1-\e^{-r})T\lambda^\prime(\gamma).
	\end{align*}
	It follows that
	\begin{align*}
	-2\gamma(1-\e^{-r})(\mathscr H_T(\omega)+\mathscr H_T(\omega^\prime))>2\gamma(1-\e^{-r})(T\lambda^\prime(\gamma)-\mathscr H_T(\omega)-\mathscr H_T(\omega^\prime))>L\geq 0
	\end{align*}
	and since the left-hand side is positive, we can use \eqref{inequality (4.29)} to get
	$
	-\frac{2\gamma t}{T}(\mathscr H_T(\omega)+\mathscr H_T(\omega^\prime))>L\geq 0.
	$
	We may now make use of the indicator. If $T$ is large enough such that $T\geq 6t$, then
	\begin{align}
	\begin{split}
	\label{third inequality for claim 4.9}
	\bET^\otimes\bigg[\Big(\e^{\gamma\mathscr I(r,T)}-1\Big)^2\1\{\gamma\mathscr G(r,T)>L\}\bigg]
	\leq \e^{-L}Z_T^{-\frac{12t}{T}}.
	\end{split}
	\end{align}
	Given $\delta>0$ we simply have to choose $L$ sufficiently large. Indeed, putting \eqref{first inequality for Claim 4.9}-\eqref{third inequality for claim 4.9} together 
	proves the claim. 
\end{proof}

Using very similar arguments as that of the proof of Lemma \ref{lemma2} and using $V(\cdot)\leq V(0)$ we can show that

\begin{lemma}\label{lemma3}
	Fix $t,\gamma>0$ and $r \leq t/T$, and denote by 
	
	\begin{equation}\label{XXprime}
	\begin{aligned}
	&X_{s,r}= \bET^\otimes\bigg[V(\omega_s-\omega_s^\prime) \exp\bigg\{\gamma\bigg(\mathscr H_T(\omega,\Br)+\mathscr H_T(\omega^\prime,\Br)-\mathscr H_T(\omega,{\dot B})- \mathscr H_T(\omega^\prime,{\dot B})\bigg)\bigg\}\bigg],\\
	&X^\prime_{s,r}= \bET^\otimes\bigg[V(\omega_s-\omega_s^\prime) \exp\bigg\{\gamma\bigg(\mathscr H_T(\omega,\eta_r)+ \mathscr H_T(\omega^\prime,\eta_r)+2T(\e^r-1)\lambda^\prime(\gamma)\bigg)\bigg\}\bigg].
	\end{aligned}
	\end{equation}
	Then for any $\delta$ we find a constant $C(\delta)$ such that
	\begin{align}
	\label{inequality for Xsr-Xsrprime}
	\E_\eta[(X_{s,r}-X_{s,r}^\prime)^2]\leq V(0)c(\gamma,t)\bigg( C(\delta)\bET\bigg[\Big|\lambda^\prime(\gamma)-\frac{\mathscr H_T(\omega)}{T}\Big|\bigg]+\delta Z_T^{-\frac{12t}{T}}\bigg).
	\end{align}
\end{lemma}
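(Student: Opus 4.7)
The plan is to mirror the proof of Lemma~\ref{lemma2}, treating the factor $V(\omega_s - \omega_s')$ that appears inside both $X_{s,r}$ and $X'_{s,r}$ as a nonnegative weight uniformly bounded by $V(0)$. First, using the decomposition $\Br = \e^{-r}\dot B + \eta_r$ established just above, one obtains the identity
$$\mathscr H_T(\omega,\Br) - \mathscr H_T(\omega,\dot B) = (\e^{-r}-1)\,\mathscr H_T(\omega,\dot B) + \mathscr H_T(\omega,\eta_r),$$
together with the analogous identity for $\omega'$. Consequently, the $\eta$-dependent part of the exponent in $X_{s,r}$ is precisely $\exp\{\gamma(\mathscr H_T(\omega,\eta_r) + \mathscr H_T(\omega',\eta_r))\}$, which coincides with the $\eta$-dependent factor built into $X'_{s,r}$. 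This means that the difference $X_{s,r} - X'_{s,r}$, after a single application of Jensen's inequality in the $\eta$-integral (just as was done for $Y_r - Y_r'$ in Lemma~\ref{lemma2}), is controlled in $L^2(\P_\eta)$ by an analogue of \eqref{first inequality for Claim 4.9} carrying one extra factor of $V(\omega_s-\omega_s')$.

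Concretely, I expect to arrive at the bound
$$\E_\eta\big[(X_{s,r}-X_{s,r}')^2\big] \leq V(0)\,c(\gamma,t)\,\bET^\otimes\!\left[V(\omega_s-\omega_s')\,\Big(\e^{\gamma(\e^{-r}-1)(\mathscr H_T(\omega,\dot B)+\mathscr H_T(\omega',\dot B)-2T\lambda'(\gamma))}-1\Big)^2\right],$$
in which one of the two factors $V(\omega_s-\omega_s')^2$ generated by squaring the difference has already been replaced by $V(0)$. From this point on the argument proceeds verbatim as in Lemma~\ref{lemma2}. Fix a threshold $L$, set $\mathscr I(r,T) := (\e^{-r}-1)(\mathscr H_T(\omega,\dot B)+\mathscr H_T(\omega',\dot B) - 2T\lambda'(\gamma))$, and split the expectation according to whether $\gamma\mathscr I(r,T) \leq L$ or $\gamma\mathscr I(r,T) > L$. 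On the first event, the inequality $(\e^x-1)^2 \le C(L)|x|$ together with $V\le V(0)$ applied to the remaining weight yields a contribution of order $C(L,\gamma,t)\,V(0)\,\bET[|\lambda'(\gamma) - \mathscr H_T(\omega)/T|]$. On the second event, the identical calculation of \eqref{third inequality for claim 4.9} delivers a bound of order $V(0)\,e^{-L} Z_T^{-12t/T}$.

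Choosing $L$ large enough so that $e^{-L} \le \delta$, both contributions combine into the right-hand side of \eqref{inequality for Xsr-Xsrprime}, after absorbing surplus powers of $V(0)$ into the constant $c(\gamma,t)$ (recall that $V(0)$ depends only on the fixed mollifier $\kappa$). There is no genuine analytic obstacle beyond the computation already carried out for Lemma~\ref{lemma2}; the only mild technicality is keeping careful track of where the single explicit factor of $V(0)$ in \eqref{inequality for Xsr-Xsrprime} originates, and I expect this to be purely a bookkeeping matter.
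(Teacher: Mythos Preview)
Your proposal is correct and follows exactly the route the paper indicates: the paper's own ``proof'' of Lemma~\ref{lemma3} is the single sentence that the argument of Lemma~\ref{lemma2} goes through verbatim once one uses $V(\cdot)\leq V(0)$ to control the extra weight $V(\omega_s-\omega_s')$. Your write-up in fact supplies more detail than the paper does, and the bookkeeping remark about absorbing surplus powers of $V(0)$ into $c(\gamma,t)$ is precisely the right observation.
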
 

Recall the definition of $\Phi_T$ from \eqref{Phidef}. Then 
$$
\bigg(\bET^{\ssup{\Br}^\otimes}[V(\omega_s-\omega_s^\prime)],\bET^\otimes\big[V(\omega_s-\omega^\prime_s)\Phi_T(\omega,\omega^\prime,\eta_r)\big]\bigg)=\bigg(\frac{X_{s,r}}{Y_{r}},\frac{X^\prime_{s,r}}{Y^\prime_{r}}\bigg)
$$
and as Lemma \ref{Lemma 4.4.} depends only on marginal distributions at fixed $r$, we fix $t,\eps>0$, $r\leq t/T$ and prove that
$
\big\|\frac{X_{s,r}}{Y_{r}}-\frac{X^\prime_{s,r}}{Y^\prime_{r}}\big\|_{L^1(\P)} \leq V(0)\E[M_T].
$
Note that
\begin{align*}
\E_\eta&\bigg[\bigg|\frac{X_{s,r}}{Y_r}-\frac{X_{s,r}^\prime}{Y_{r}^\prime}\bigg|\bigg]
&\leq \big(\E_\eta[Y_r^{-2}]\big)^{1/2}\big(\E_\eta[(X_{s,r}-X_{s,r}^\prime)^2]\big)^{1/2}+V(0)\big(\E_\eta[Y^{-2}_r]\big)^{1/2}\big(\E_\eta[(Y_r-Y_r^\prime)^2]\big)^{1/2}.
\end{align*}
Part (a) and Part (b) of Lemma \ref{Lemma 4.4.} follow now from Lemma \ref{lemma1}-Lemma \ref{lemma3}, if we define 
$$
M_T=c(\gamma,t)\bigg(C(\delta)\bET\bigg[\Big|\lambda^\prime(\gamma)-\frac{\mathscr H_T(\omega)}{T}\Big|\bigg]+\delta Z_T^{-\frac{12t}{T}}\bigg)^{1/2}
$$
choose $T$ large enough, such that, $12t\leq T$ and use \eqref{Lemma 3.11} which yields that $\delta$ can be chosen sufficiently small. 
\qed

{\bf {Proof of Part (c) of Lemma \ref{Lemma 4.4.}}}
Let us define

\begin{align*}
X''_{s,r}&:=\bET^\otimes\bigg[V(\omega_s-\omega_s^\prime)\e^{\gamma(\mathscr H_T(\omega,\eta_r)+\mathscr H_T(\omega^\prime,\eta_r))}\bigg]\e^{\gamma^2(\e^{-r}-1)TV(0)}\\ 
Y''_{r}&:=\bET^\otimes\bigg[\e^{\gamma(\mathscr H_T(\omega,\eta_r)+\mathscr H_T(\omega^\prime,\eta_r))}\bigg]\e^{\gamma^2(\e^{-r}-1)TV(0)}.
\end{align*}

We will show that 
$
\big\|\frac{X^{\prime\prime}_{s,r}}{Y^{\prime\prime}_{r}}-\bET^\otimes[V(\omega_s-\omega_s^\prime)]\big\|_{L^1(\P,B_\delta)} \leq \P(B_\delta) \eps V(0).
$ 
For simplicity, we write $X^{\prime\prime}=X^{\prime\prime}_{s,r}$ and $Y^{\prime\prime}=Y^{\prime\prime}_{r}$. Also using that $\frac{X''}{Y''}\leq V(0)$, it can be shown that
\begin{align}
\begin{split}
\label{first inequality for 4.4b}
\E_\eta\bigg|\frac{X''}{Y''}-\bET^\otimes[V(\omega_s-\omega_s')]\bigg|
&\leq V(0)\E_\eta|1-Y''|+\E_\eta|X''-\bET^\otimes[V(\omega_s-\omega_s')]|.
\end{split}
\end{align}
We first consider the second of the two expectations above: 
\begin{align}
\begin{split}
\label{second inequality for 4.4b}
&\E_\eta|X''-\bET^\otimes[V(\omega_s-\omega_s')]|\\
&=\E_\eta\bigg|\int_{\R^d}\bigg(\bET\bigg[\kappa(y-\omega_s)\e^{\gamma\mathscr H_T(\omega,\eta_r)-\frac{\gamma^2}{2}(1-\e^{-2r})TV(0)}\bigg]\bigg)^2-\big(\bET[\kappa(y-\omega_s)]\big)^2\d y\bigg|\\
&\leq \int_{\R^d}\E_\eta\bigg|\bigg(\bET\bigg[\kappa(y-\omega_s)\e^{\gamma\mathscr H_T(\omega,\eta_r)-\frac{\gamma^2}{2}(1-\e^{-2r})TV(0)}\bigg]\bigg)^2-\big(\bET[\kappa(y-\omega_s)]\big)^2\bigg|\d y
\end{split}
\end{align}
Using $a^2-b^2=(a+b)(a-b)$ followed by Cauchy-Schwarz inequality, we obtain 
\begin{align}
\begin{split}
\label{third inequality for 4.4b}
&\bigg(\int_{\R^d}\E_\eta\bigg|\bigg(\bET\bigg[\kappa(y-\omega_s)\e^{\gamma\mathscr H_T(\omega,\eta_r)-\frac{\gamma^2}{2}(1-\e^{-2r})TV(0)}\bigg]\bigg)^2-\big(\bET[\kappa(y-\omega_s)]\big)^2\bigg|\d y\bigg)^2\\
&\leq \int_{\R^d}\E_\eta\bigg[\bigg(\bET\bigg[\kappa(y-\omega_s)\e^{\gamma\mathscr H_T(\omega,\eta_r)-\frac{\gamma^2}{2}(1-\e^{-2r})TV(0)}\bigg]-\bET[\kappa(y-\omega_s)]\bigg)^2\bigg]\d y\\
&\times\int_{\R^d}\E_\eta\bigg[\bigg(\bET\bigg[\kappa(y-\omega_s)\e^{\gamma\mathscr H_T(\omega,\eta_r)-\frac{\gamma^2}{2}(1-\e^{-2r})TV(0)}\bigg]+\bET[\kappa(y-\omega_s)]\bigg)^2\bigg]\d y.
\end{split}
\end{align}
For the first factor we note that $\E_\eta\big[\bET\big[\kappa(y-\omega_s)\e^{\gamma\mathscr H_T(\omega,\eta_r)-\frac{\gamma^2}{2}(1-\e^{-2r})TV(0)}\big]\big]=\bET[\kappa(y-\omega_s)]$, so that it is bounded above by
\begin{align}
\begin{split}
\label{fourth inequality for 4.4b}
\bET^\otimes\bigg[V(\omega_s-\omega_s')\bigg(\e^{\gamma^2(1-\e^{-2r})\int_0^TV(\omega_s-\omega_s')\d s}-1\bigg)\bigg]
&\leq V(0)\bET^\otimes\bigg[\e^{\gamma^2(1-\e^{-2r})\int_0^TV(\omega_s-\omega_s')\d s}-1\bigg].
\end{split}
\end{align}
For the above exponential we use first inequality \eqref{inequality (4.29)} and then that there exists a constant $c(\gamma,t)$ such that, $\e^{x}-1\leq c(\gamma,t)x$ for any $0\leq x\leq c^\prime(\gamma,t)$. That proves
\begin{align}
\begin{split}
\label{fifth inequality for 4.4b}
\bET^\otimes\bigg[\e^{\gamma^2(1-\e^{-2r})TV(0)\frac{1}{TV(0)}\int_0^TV(\omega_s-\omega_s')\d s}-1\bigg]
&\leq c(\gamma,t)\bET^\otimes\bigg[\frac{1}{T}\int_0^TV(\omega_s-\omega_s')\d s\bigg]
\end{split}
\end{align}
The same argumentation for the second factor yields the upper bound $V(0)c(\gamma,t)$.
Thus, putting \eqref{second inequality for 4.4b}-\eqref{fifth inequality for 4.4b} together proves
$$
\E_\eta|X''-\bET^\otimes[V(\omega_s-\omega_s')]|\leq V(0)c(\gamma,t)\bET^\otimes\bigg[\frac{1}{T}\int_0^TV(\omega_s-\omega_s')\d s\bigg].
$$
A similar argumentation also shows that
$
\E_\eta|1-Y''|\leq c(\gamma,t)\bET^\otimes\big[\frac{1}{T}\int_0^TV(\omega_s-\omega_s')\d s\big].
$
The last two assertions together with \eqref{first inequality for 4.4b} then prove
$$
\E_\eta\bigg|\frac{X''}{Y''}-\bET^\otimes[V(\omega_s-\omega_s')]\bigg|\leq V(0)c(\gamma,t)\bET^\otimes\bigg[\int_0^TV(\omega_s-\omega_s')\d s\bigg]
$$
and, thus, for given $\eps$ we may choose $\delta$ small enough such that
$
\big\|\frac{X^{\prime\prime}_{s,r}}{Y^{\prime\prime}_{r}}-\bET^\otimes[V(\omega_s-\omega_s^\prime)]\big\|_{L^1(\P,B_\delta)} \leq \P(B_\delta) \eps V(0).
$
\qed

\noindent{\bf{Acknowledgement:}}  
It is a pleasure to thank Nathanael Berestycki (Vienna), Jason Miller (Cambridge)  and Vincent Vargas (Paris) for their encouragement, inspiration and valuable insights on an earlier draft of the manuscript.  We would also like to thank Louis-Pierre Arguin (New York) for sharing \cite{AZ14,AZ15} and interesting discussions on log-correlated fields as well as 
Nikos Zygouras (Warwick) for inspiring communications. We are especially grateful to Simon Gabriel (Warwick) for pointing out earlier inaccuracies. The research of both authors is funded by the Deutsche Forschungsgemeinschaft (DFG) under Germany's Excellence Strategy EXC 2044--390685587, Mathematics M\"unster: Dynamics--Geometry--Structure.



\begin{thebibliography}
\bibliographystyle{}





\smallskip

\bibitem[AZ14]{AZ14}
{\sc L.-P. Arguin} and {\sc O. Zindy}.
\newblock{Poisson-Dirichlet statistics for the extremes of a log-correlated Gaussian field}
\newblock{\it Ann. Appl. Probab.} {\bf 24} (2014), 1446-1481

\smallskip 





\bibitem[AZ15]{AZ15}
{\sc L.-P. Arguin} and {\sc O. Zindy}.
\newblock{Poisson-Dirichlet statistics for the extremes of the two-dimensional discrete Gaussian free field.}
\newblock{Electron J. Probab.}, (2015), no. 59 1-15



\smallskip






\bibitem[BC16]{BC16}
{\sc E. Bates} and {S. Chatterjee.}
\newblock{\it The endpoint distribution of directed polymers.}
\newblock{\it Ann. Probab.}, arXiv:1612.03443, (2016)




\smallskip





\bibitem[BC19]{BC19}
{\sc E. Bates} and {S. Chatterjee.}
\newblock{\it Localization in Gaussian disordered systems at low temperature.}
\newblock{\it Ann. Probab.} (to appear),  arXiv:1906.05502 (2019)








\smallskip


\bibitem[BP21]{BP21}
{\sc N. Berestycki} and {\sc E. Powell.}
\newblock{Gaussian free field, Liouville quantum gravity and Gaussian multiplicative chaos.}
March 2021, Available at: \href{https://homepage.univie.ac.at/nathanael.berestycki/Articles/master.pdf}{https://homepage.univie.ac.at/nathanael.berestycki/Articles/master.pdf}

\smallskip


\bibitem[Ber17]{Ber17}
{\sc N. Berestycki.}
\newblock{An elementary approach to Gaussian multiplicative chaos.}
\newblock{Electron. Comm. Probab.}, {\bf 22}, (2017)

\smallskip

\bibitem[BGRV16]{BGRV16}
{\sc N. Berestycki.}, {\sc C. Garban}, {\it R. Rhodes} and {\it V. Vargas.}
\newblock{KPZ formula derived from Liouville heat kernel.}
\newblock{\it Journal of the London Mathematical Society}, {\bf 94}, (2016), 186-208

\smallskip 


\bibitem[BPR18]{BPR18}
{\sc N. Berestycki}, {\sc E. Powell} and {\sc G. Ray.}
\newblock{A characterisation of the Gaussian free field.}
\newblock{\it Prob. Th. Rel. Fields}, {\bf 176}, 1259-1301 (2020) 

\smallskip

\bibitem[BPR20]{BPR20}
{\sc N. Berestycki}, {\sc E. Powell} and {\sc G. Ray.}
\newblock{$(1+\eps)$-moment suffice to characterise the GFF.}
\newblock{\it Electron. J. Probab.} {\bf 26}, 1-25 (2021) 

\smallskip 







\bibitem[BM19]{BM19}
{\sc Y. Br\"oker} and {\sc C. Mukherjee}.
\newblock{Quenched central limit theorem for the stochastic heat equation in weak disorder.}
\newblock{Probability and Analysis in Interacting Physical Systems}, (2019), 173-189, arXiv:1710.00631

\smallskip



\bibitem[BM19-II]{BM19-II}
{\sc Y. Br\"oker} and {\sc C. Mukherjee}.
\newblock{Localization of the Gaussian multiplicative chaos in the Wiener space and the stochastic heat equation in strong disorder.}
\newblock{\it Ann. Appl. Probab.}, {\bf 29} (2019), 3745-3785, arXiv: 1808.05202







\smallskip


\bibitem[CCM19]{CCM19}
{\sc F. Comets}, {\sc C. Cosco} and {C. Mukherjee}.
\newblock{Renormalizing the Kardar-Parisi-Zhang equation in $d\geq 3$ in weak disorder.}
\newblock{\it J Stat Phys.} {\bf 179} 713-728 (2020), Available at: arXiv: 1902.04104

\smallskip


\bibitem[CCM19-II]{CCM19-II}
{\sc F. Comets}, {\sc C. Cosco} and {C. Mukherjee}.
\newblock{Space-time fluctuation of the Kardar-Parisi-Zhang equation  in $d\geq 3$ and the Gaussian free field.}
\newblock{\it Preprint}, arXiv: 1905.03200 (2019)

\smallskip



\bibitem[CC13]{CC13}
{\sc F. Comets} and {\sc M. Cranston.}
\newblock{Overlaps and pathwise localization in the Anderson polymer model.}
\newblock{\it Stochastic Processes and their Applications}, {\bf 123},  2446-2471, (2013). 


\smallskip














\bibitem[CSY03]{CSY03}
{\sc F. Comets}, {\sc T. Shiga} and {\sc N. Yoshida}.
\newblock{Directed polymers in a random environment: path localization and strong disorder}
\newblock{Bernoulli},  
{\bf 9}, 705--728, 2003

\smallskip



\bibitem[CY06]{CY06}
{\sc F. Comets} and {\sc N. Yoshida},
\newblock{Directed polymers in random environment are diffusive in weak disorder.}
\newblock{Ann. Probab.} {\bf 34}, 1746-1770, 2006.





















\bibitem[CNN20]{CNN20}
{\sc C. Cosco}, {\sc S. Nakajima} and {\sc M. Nakashima.}
\newblock{Law of large numbers and fluctuations in the
	sub-critical and $L^2$	regions for SHE and KPZ	equation in dimension $d\geq 3$.}
\newblock{\it Preprint}, arXiv: 2005.12689 (2020)

\smallskip




\bibitem[DS88]{DS88}
{\sc B. Derrida} and {\sc H. Spohn.}
\newblock{Polymers on disordered trees, spin glasses and traveling waves,}
\newblock{J. Stat. Phys.}, {\bf 51}, 817-840 (1988).

\smallskip




\bibitem[DRSV14-I]{DRSV14-I}
{\sc B. Duplantier}, {\sc R. Rhodes}, {\sc S. Sheffield} and {\sc V. Vargas.}
\newblock{Critical Gaussian multiplicative chaos: Convergence of the derivative martingale.}
\newblock{\it Ann. Probab.}, {\bf 42}, (2014), 1769-1808

\smallskip

\bibitem[DRSV14-II]{DRSV14-II}
{\sc B. Duplantier}, {\sc R. Rhodes}, {\sc S. Sheffield} and {\sc V. Vargas.}
\newblock{Renormalization of Critical Gaussian Multiplicative Chaos and KPZ Relation.}
\newblock{\it Communications in Mathematical Physics}, {\bf  330}, (2014), pp 283-330

\smallskip  

\bibitem[DS11]{DS11}
{\sc B. Duplantier} and {\sc S. Sheffield.} 
\newblock{Liouville quantum gravity and KPZ}
\newblock{Invent. Math.}, {\bf 185}, 333-393, (2011) 



\smallskip

\bibitem[GM19]{GM19}
{\sc E. Gwynne} and {\sc J. Miller.}
\newblock{Existence and uniqueness of the Liouville quantum gravity metric for $\gamma\in (0,2)$.}
\newblock{\it Invent. Math}, to appear, (2019), arXiv: 1905.00383

\smallskip 



\bibitem[HMP10]{HMP10}
{\sc X. Hu}, {\sc J. Miller} and {\sc Y. Peres.}
\newblock{Thick points of the Gaussian free field.}
\newblock{\it Ann. Probab.} {\bf 38}, (2010), 896-926

\smallskip 







\bibitem[K85]{K85}
{\sc J.-P. Kahane},
\newblock{ Sur le chaos multiplicatif.}
\newblock{\it Ann. sc. math. Quebec} {\bf 9}, no. 2, 105-150 (1985).

\smallskip



\bibitem[KPZ88]{KPZ88}
{\sc V. Knizhnik}, {\sc A. M. Polyakov} and {\sc A. B. Zamolodchikov.}
\newblock{Fractal structure of $2d$ quantum gravity.}
\newblock{Modern Physics Letters A,} {\bf 3}: 819? 826, 1988.


\smallskip 


\bibitem[LZ20]{LZ20}
{\sc D. Lygkonis} and {\sc N. Zygouras.}
\newblock{Edwards-Wilkinson fluctuations for the directed polymer in the full $L^2$-regime for dimensions $d\geq 3$.}
\newblock{\it Preprint}, arXiv: 2005.12706 (2020)

\smallskip





\bibitem[MRV16]{MRV16}
{\sc T. Madaule}, {\sc R. Rhodes} and {\sc V. Vargas.}
\newblock{Glassy phase and freezing of log-correlated Gaussian potentials.}
\newblock{Ann. Appl. Probab.}, {\bf 26}, (2016), 643-690

\smallskip

\bibitem[MS15]{MS15}
{\sc J. Miller} and {\sc S. Sheffield.}
\newblock{Liouville quantum gravity and the Brownian map I: the QLE $(8/3,0)$ metric.}
\newblock{\it Inventiones Mathematicae}, to appear, arXiv: 1507.00719, (2015)

\smallskip


\bibitem[MS16a]{MS16a}
{\sc J. Miller} and {\sc S. Sheffield.}
\newblock{Liouville quantum gravity and the Brownian map II: geodesics and continuity of the embedding.}
\newblock{\it Preprint}, arXiv: 1605.03563, (2016)

\smallskip




\bibitem[MS16b]{MS16b}
{\sc J. Miller} and {\sc S. Sheffield.}
\newblock{Liouville quantum gravity and the Brownian map III: the conformal structure is determined.}
\newblock{\it Preprint}, arXiv: 1608.05391, (2016)






\smallskip 

\bibitem[MSZ16]{MSZ16}
{\sc C. Mukherjee}, {\sc A. Shamov} and {\sc O. Zeitouni.} 
\newblock{Weak and strong disorder for the stochastic heat equation and the continuous directed polymer in $d\geq 3$.}
\newblock{\it Electron. Commun. Probab.} {\bf 21} Paper No. 61, 12., (2016),  arXiv: 1601.01652

\smallskip


\bibitem[M17]{M17}
{\sc C. Mukherjee.}  
\newblock{Central limit theorem for Gibbs measures on path spaces including long range and singular interactions and homogenization of the stochastic heat equation.}
\newblock {\it Ann. Appl. Probab.},  arXiv: 1706.09345, (2017)

\smallskip

\bibitem[MV14]{MV14}
{\sc C. Mukherjee} and {\sc S.R.S.  Varadhan.}
\newblock{Brownian occupation measures, compactness and large deviations.}
\newblock{\it Ann. Probab.} {\bf 44}, 3934--3964, (2016), arXiv: 1404.5259





\smallskip


\bibitem[N06]{N06}
{\sc D. Nualart}
\newblock{The Malliavin calculus and related topics (2nd Ed.).}
\newblock{Springer-Verlag}, Berlin, (2006)


\smallskip

\bibitem[P10]{P10}
{\sc D. Panchenko.}
\newblock{The Ghirlenda-Guerra identities for mixed $p$-spin model.}
\newblock{Comptes Rendus Mathematique}, {\bf 348}, 189-192 (2010)







\smallskip

\bibitem[RV10]{RV10}
{\sc R. Robert} and {\sc V. Vargas.}
\newblock{Gaussian multiplicative chaos revisited}
\newblock{Ann. Probab.} {\bf 38}, 605-631, (2010).  



\bibitem[RV11]{RV11}
{\sc R. Rhodes} and {\sc V. Vargas.}
\newblock{KPZ formula for log-infinitely divisible multifractal random measures.}
\newblock{ESAIM Probab. Stat.}, {\bf 15}, 358-371, 2011








\smallskip

\bibitem[S14]{S14}
{\sc A. Shamov.}
\newblock{ On Gaussian multiplicative chaos.}
\newblock{Journal of Functional Analysis.} {\bf 270}, 3224-3261, (2016) {arXiv:1407.4418} (2014).

\smallskip




\bibitem[\"UZ00]{UZ00}
{\sc S.\"Ust\"unel} and {\sc M. Zakai}
\newblock{Transformation of measure on Wiener space.}
\newblock{Springer-Verlag}, Berlin, (2000)








\smallskip



\bibitem[V07]{V07}
{\sc V. Vargas.}
\newblock{Strong localization and macroscopic atoms for directed polymers.}
\newblock{Probab. Th. Rel. Fields.}, {\bf 138}, 391-410, (2007)



\end{thebibliography}
\end{document}